\definecolor{darkblue}{rgb}{0.0, 0.0, 0.55}
\definecolor{bordeaux}{rgb}{0.34, 0.01, 0.1}
\newcommand{\dashto}{\to\mkern-21mu {\color{white}ll}\mkern9.6mu}
\def\moverlay{\mathpalette\mov@rlay}
\def\mov@rlay#1#2{\leavevmode\vtop{
		\baselineskip\z@skip \lineskiplimit-\maxdimen
		\ialign{\hfil$#1##$\hfil\cr#2\crcr}}}
\newcommand{\lfre}{{<}}
\newcommand{\rfre}{{>}}
\newcommand{\lfree}{\moverlay{(\cr<}}
\newcommand{\rfree}{\moverlay{)\cr>}}
\newcommand{\kk}{{\mathbbm k}}
\newcommand{\px}[1]{\kk\lfre #1\rfre}
\newcommand{\rx}[1]{\kk\lfree #1\rfree}
\DeclareMathOperator{\GL}{GL}
\DeclareMathOperator{\id}{id}
\DeclareMathOperator{\tr}{tr}
\DeclareMathOperator{\kar}{char}
\newcommand{\N}{{\mathbb N}}
\newcommand{\Z}{{\mathbb Z}}
\newcommand{\cB}{{\mathcal B}}
\newcommand{\cR}{{\mathcal R}}
\newcommand{\cL}{{\mathcal L}}
\newcommand{\cT}{{\mathcal T}}
\newcommand{\cC}{{\mathscr C}}
\newcommand{\cD}{{\mathscr D}}
\newcommand{\cZ}{{\mathscr Z}}
\newcommand{\sD}{{\mathbf D}}
\newcommand{\sW}{{\mathbf W}}
\newcommand{\fz}{{\mathfrak z}}
\newcommand{\fX}{{\mathfrak X}}
\newcommand{\fZ}{{\mathfrak Z}}
\newcommand{\op}{{\rm op}}
\renewcommand{\d}{{\rm d}}
\newtheorem{theorem}{Theorem}[section]
\newtheorem{proposition}[theorem]{Proposition}
\newtheorem{lemma}[theorem]{Lemma}
\newtheorem{corollary}[theorem]{Corollary}
\newtheorem{question}[theorem]{Question}
\newtheorem*{rep@thm}{\rep@title}
\newcommand{\newreptheorem}[2]{%
	\newenvironment{rep#1}[1]{%
		\def\rep@title{#2 \ref{##1}}%
		\begin{rep@thm}}%
		{\end{rep@thm}}}
\theoremstyle{definition}
\newtheorem{algorithm}[theorem]{Algorithm}
\newtheorem{example}[theorem]{Example}
\newcommand{\Hom}{\operatorname{Hom}}
\newcommand{\End}{\operatorname{End}}
\newcommand{\divisor}{\operatorname{div}}
\title[Invariants of finite groups acting on (free) skew fields]{Invariants of finite groups\\acting on (free) skew fields}
\author[Harm Derksen]{Harm Derksen${}^\ast$}
\address{Department of Mathematics, Northeastern University, Massachusetts, USA}
\email{ha.derksen@northeastern.edu}
\thanks{${}^\ast$Partially supported by NSF grant DMS-2147769 and a Simons Foundation Fellowship.}
\author[Jurij Vol\v{c}i\v{c}]{Jurij Vol\v{c}i\v{c}${}^\dagger$}
\address{Department of Mathematics, University of Auckland, New Zealand}
\email{jurij.volcic@auckland.ac.nz}
\thanks{${}^\dagger$Partially supported by NSF grant DMS-2348720.}
\date{\today}
\keywords{Invariant skew subfield, free skew field, free Noether's problem, free L\"uroth's problem, finite skew field extension}
\subjclass[2020]{16W22, 16S15, 14E08, 16K40}
\begin{document}

\begin{abstract}
Let $M$ be a finitely generated skew field over a ground field $\kk$, and let $G$ be a finite group of $\kk$-linear automorphisms of $M$. 
This paper investigates finite generation of the skew subfield $M^G$ of $G$-invariants in $M$, and relations between the generators. 
The first main result shows that $M^G$ is finitely generated. Stronger conclusions hold when $M$ is a free skew field, i.e., the universal skew field of fractions of a free algebra. 
The second main result is the solution of the free Noether problem for non-modular linear group actions: if $G$ acts linearly on the free skew field $M$ on $m$ generators and $\kar\kk$ does not divide $|G|$, then $M^G$ is the free skew field on $|G|(m-1)+1$ generators. 
In contrast, a nonlinear action of $\Z_2$ on the free skew field $M$ on two generators is presented such that $M^{\Z_2}$ is not a free skew field, resolving the free L\"uroth problem. This action also exposes a non-scalar element of $M$ whose centralizer is not a rational field, refuting a conjecture of P.~M.~Cohn from 1978.
\end{abstract}

% arXiv mailing list version of the abstract
\iffalse
Let $M$ be a finitely generated skew field over a ground field $k$, and let $G$ be a finite group of $k$-linear automorphisms of $M$. This paper investigates finite generation of the skew subfield $M^G$ of $G$-invariants in $M$, and relations between the generators. The first main result shows that $M^G$ is finitely generated. Stronger conclusions hold when $M$ is a free skew field, i.e., the universal skew field of fractions of a free algebra. The second main result is the solution of the free Noether problem for non-modular linear group actions: if $G$ acts linearly on the free skew field $M$ on $m$ generators and the characteristic of $k$ does not divide $|G|$, then $M^G$ is the free skew field on $|G|(m-1)+1$ generators. In contrast, a nonlinear action of $Z_2$ on the free skew field $M$ on two generators is presented such that $M^{Z_2}$ is not a free skew field, resolving the free Lüroth problem. This action also exposes a non-scalar element of $M$ whose centralizer is not a rational field, refuting a conjecture of P. M. Cohn.
\fi

\maketitle

\tableofcontents

\section{Introduction}\label{sec:intro}

Invariant theory studies group actions on algebraic varieties in terms of functions on them \cite{PV,Olv,DK}. Its classical problems focus on identifying generators and relations for polynomial and rational invariants of a group acting on an affine space. 
In particular, given a finite group acting on a field of multivariate rational functions, Noether's problem asks whether the invariant subfield is itself rational \cite{Noe,Swa,Swa1}. Emmy Noether originally considered permutation actions, but the problem has also been studied for linear and non-linear actions of finite groups.
Negative answers to this question in particular resolved L\"uroth's problem on existence of non-rational subfields in a rational field \cite{CG,AM,Salt}. 
These developments inspired analogous problems in noncommutative rings, such as the Noether problem for rings of differential operators \cite{AD,FS}.

This paper investigates rational invariants in freely noncommutative variables (i.e., not satisfying any relations). Let $\kk$ be a ground field, and let $G$ be a finite group of automorphisms acting on the free algebra of noncommutative polynomials $\px{x_1,\dots,x_m}$. The first structural result about the subalgebra of $G$-invariants $\px{x_1,\dots,x_m}^G$ when $G$ is the full symmetric group permuting the variables $x_j$ dates back to \cite{Wol}. In general, when $G$ acts linearly on $x_1,\dots,x_m$, 
the algebra $\px{x_1,\dots,x_m}^G$ is free on infinitely many generators, unless $G$ simply scales $x_1,\dots,x_m$ by the same root of unity, in which case it is free and finitely generated \cite{DF,Kha}. Although this result conclusively describes the ring structure of noncommutative polynomial invariants, several further advances were made on their combinatorial \cite{Gel,RS} and analytic aspects \cite{AY,Cus}.

In contrast, noncommutative rational invariants have been less studied. The free algebra $\px{x_1,\dots,x_m}$ admits the universal skew field of fractions $\rx{x_1,\dots,x_m}$, also called the \emph{free skew field} on $m$ generators \cite{Ami}. 
Beyond its universal role among division rings \cite{Coh1,Sch}, the free skew field appears in automata theory \cite{BR1}, control systems \cite{BGM} and computational complexity \cite{HW,Gar}. 
Furthermore, its elements can be viewed as noncommutative rational functions in several matrix variables \cite{KVV1}; this analytic perspective connects the free skew field to free analysis \cite{KVV2,AMY} and free probability \cite{HMS,CMMPY}. 

If a finite group $G$ acts on $\px{x_1,\dots,x_m}$, its action naturally extends to $\rx{x_1,\dots,x_m}$. Although $\px{x_1,\dots,x_m}^G$ is almost never finitely generated as an algebra as explained above, it was observed in \cite{AMY,Cus} that when $G$ is abelian and acts linearly, there are \emph{finitely} many $G$-invariant noncommutative rational functions such that every element of $\px{x_1,\dots,x_m}^G$ is a \emph{rational} function in them. 
In the first systematic study of the structure of noncommutative rational invariants, \cite{KPPV} considered linear actions of finite solvable groups $G$ on $\rx{x_1,\dots,x_m}$. It was shown that $\rx{x_1,\dots,x_m}^G$ is a finitely generated skew field in this case, and is free on $|G|(m-1)+1$ generators for certain linear actions of $G$ satisfying a cohomological condition \cite{Pod1} (which is always satisfied for finite abelian groups).

This paper explores finite generation and freeness of $\rx{x_1,\dots,x_m}^G$ for a general action of a finite group $G$. Our methods are independent of the preceding results mentioned above, and arise from finite extensions of skew fields. In fact, the scope of the first main result (Theorem~\ref{theo:FinitelyGenerated} below) on finite generation extends well beyond free skew fields.
If $L\subseteq M$ are skew fields, then $M$ is both a  free left $L$-module and a free right $L$-module. The left $L$-dimension (resp.\ right $L$-dimension) of $M$ is the number of free generators of $M$ as a left (resp.\ right) $L$-module.  The left and right dimension do not have to be the same, and one could even be infinite while the other is finite (see~\cite[Section 5.9]{Coh1}, \cite[Chapter 5]{Coh3} or~\cite{Sch0}).

\begin{repthm}{theo:FinitelyGenerated} 
Let $L\subseteq M$ be skew fields containing a field $\kk$. If $M$ is generated by $m$ elements over $\kk$ 
and the left or right $L$-dimension of $M$ is $d$, 
then $L$ is generated by at most $d^2(m-1)+d$ elements over $\kk$.
\end{repthm}

If $L=M^G$ for a finite group $|G|$, then $M$ has (left or right) dimension at most $|G|$ over $L$, so Theorem \ref{theo:FinitelyGenerated} 
applies to invariant skew subfields. 
When $M$ is a finitely generated free skew field, we obtain the generators of $L$ in a constructive manner, as well as a finite set of relations between them that are fundamental in a certain sense (Proposition \ref{p:freetest}). 
Furthermore, these relations can be resolved if the action of $G$ is linear; that is, we obtain an affirmative answer to the free Noether problem in this case.

\begin{repthm}{t:linfree}
If $G$ is a finite group of linear automorphisms on $\rx{x_1,\dots,x_m}$ and $\kar\kk$ does not divide $|G|$, then $\rx{x_1,\dots,x_m}^G$ is the free skew field on $|G|(m-1)+1$ generators.
\end{repthm}

At this point, it is worth mentioning that to date it has been unknown whether every skew subfield of a free skew field is free \cite[pages 183 and 206]{Sch}. We show that this free L\"uroth problem has a negative resolution by demonstrating that the conclusion of Theorem~\ref{t:linfree} fails for nonlinear group actions. More concretely, Theorem \ref{t:nonfree} presents an action of $\Z_2$ on $\rx{x,y}$ such that $\rx{x,y}^{\Z_2}$ is not free. 

A special case of the free L\"uroth problem is related to the rationality of centralizers in free skew fields. In \cite[Conjecture]{Coh0}, Cohn conjectured that the centralizer of a non-scalar element in a free skew field is isomorphic to $\kk(t)$; see also \cite[page 183]{Sch}, \cite[page 329]{Lot} and \cite[Problem 4.1]{Bel} for other reappearances. 
However, our example of a non-free skew subfield of $\rx{x,y}$ leads to a counter-example to this conjecture.

\begin{repcor}{c:centraliser}
Let $\kar\kk\neq2$. There exists $r\in \rx{x,y}\setminus\kk$ with a non-rational centralizer. 
\end{repcor}

\section{Preliminaries}\label{sec:prelim}

Throughout this paper let $\kk$ be a fixed ground field. All rings and skew fields are considered to be unital $\kk$-algebras, and homomorphisms between them are unital and $\kk$-linear. Given two subsets $A,B$ of an algebra, let $AB$ denote the span of all products $ab$ for $a\in A$ and $b\in B$. When discussing a group action on an algebra, we mean an action by $\kk$-linear automorphisms (i.e., an action that is compatible with the $\kk$-algebra structure), and we write it using the exponential notation. Given an algebra $R$ and a group $G$ acting on $R$, let $R^G$ denote the subalgebra of $G$-invariant elements.

Let $\px{x_1,\dots,x_m}$ be the free associative algebra generated by freely noncommuting variables $x_1,\dots,x_m$. 
We say that $\px{x_1,\dots,x_m}$ is free of rank $m$. 
Let $\rx{x_1,\dots,x_m}$ denote the universal skew field of fractions of $\px{x_1,\dots,x_m}$; see   \cite[Corollary 4.5.9 and Theorem 5.4.1]{Coh1} or \cite[Corollaries 2.5.2 and 7.5.14]{Coh2} for uniqueness and universality. Informally, $\rx{x_1,\dots,x_m}$ is a skew field where the only relations between $x_1,\dots,x_m$ are consequences of division ring axioms, such as 
\begin{equation}\label{e:example}
\big(x_1-x_2x_4^{-1}x_3\big)^{-1} = 
x_1^{-1}+x_1^{-1}x_2\big(x_4-x_3x_1^{-1}x_2\big)^{-1}x_3x_1^{-1}.
\end{equation}
Therefore, $\rx{x_1,\dots,x_m}$ is called the \emph{free skew field} of rank $m$. 
For the reader's convenience, we collect some facts about free skew fields. 

\begin{enumerate}[(a)]
\item Consider the set of formal rational expressions in $x_1,\dots,x_m$ and scalars from $\kk$ that are well-defined at some $m$-tuple of square matrices over $\kk$. Two such expressions are equivalent if their evaluations agree on all matrix tuples over $\kk$ at which both expressions are well-defined. By \cite[Theorem 16 and Section II.1]{Ami} or \cite[Section 2]{KVV1}, one can view elements of $\rx{x_1,\dots,x_m}$ as equivalence classes of such formal rational expressions.

\item By \cite{Lew}, another way to realize $\rx{x_1,\dots,x_m}$ is as the skew subfield generated by $\px{x_1,\dots,x_m}$ within the skew field of Malcev-Neumann series over the free group generated by $x_1,\dots,x_m$.

\item A square matrix $A$ over $\px{x_1,\dots,x_m}$ is full if in any factorization $A=BC$ over $\px{x_1,\dots,x_m}$, $B$ has at least as many columns as $A$. By \cite[Corollary 7.5.14]{Coh2}, a square matrix over $\px{x_1,\dots,x_m}$ is invertible over $\rx{x_1,\dots,x_m}$ if and only if it is full.

\item By \cite[Theorem 5.8.11 and Corollary 5.8.14]{Coh1}, free algebras and free skew fields are determined by their ranks up to isomorphism.
\end{enumerate}

Let $a_1,\dots,a_m$ be elements of a skew field $D$, and let $r\in\rx{x_1,\dots,x_m}$. If a formal rational expression representing $r$ is well-defined at $(a_1,\dots,a_m)$, in the sense that all nested inverses in the expression exist in $D$ when the $x_i$ are replaced by the $a_i$, then we say that $(a_1,\dots,a_m)$ is in the domain of $r$. If two expressions representing $r$ (e.g., as in \eqref{e:example}) are well-defined at $(a_1,\dots,a_m)$, their evaluations at it coincide \cite[Section 7.3]{Coh1}, and are denoted $r(a_1,\dots,a_m)\in D$. In particular, whenever referring to the value $r(a_1,\dots,a_m)$, we tacitly assume that $(a_1,\dots,a_m)$ is in the domain of $r$. More generally, if $A$ is a matrix over $\rx{x_1,\dots,x_m}$ such that $(a_1,\dots,a_m)$ lies in the domains of all its entries, then $A(a_1,\dots,a_m)$ is a well-defined matrix over $D$. 

Since (unital) homomorphisms between skew fields are necessarily embeddings, we also use more general maps between them. 
Let $D$ and $E$ be skew fields. A \emph{local homomorphism} from $D$ to $E$ \cite[Section 4.1]{Coh1} is a unital homomorphism from a subring $D_0$ of $D$ to $E$ that maps non-invertible elements in $D_0$ to zero.  

\begin{lemma}[{\cite[Theorem 4.4.1]{Coh1}}]\label{l:univ}
Let $D$ be a skew field generated by $a_1,\dots,a_m$, and let $E$ be a skew field containing $b_1,\dots,b_m$. The following are equivalent:
\begin{enumerate}
    \item there exists a local homomorphism from $D$ to $E$ such that $a_i\mapsto b_i$ for $i=1,\dots,m$;
    \item for every square matrix $A$ over $\px{x_1,\dots,x_m}$, if $A(b_1,\dots,b_m)$ is invertible over $E$ then $A(a_1,\dots,a_m)$ is invertible over $D$.
\end{enumerate}
\end{lemma}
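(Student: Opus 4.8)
The plan is to prove both implications directly, by block-matrix manipulations over the skew fields $D$ and $E$; no heavy machinery (matrix ideals, universal localization) is needed.

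\emph{(1)$\Rightarrow$(2).} Let $\varphi\colon D_0\to E$ be a local homomorphism, $D_0\subseteq D$ a subring containing $\kk$ and all $a_i$, with $\varphi(a_i)=b_i$. Applying $\varphi$ entrywise gives ring homomorphisms $\varphi_n\colon\opm_n(D_0)\to\opm_n(E)$ which send elementary matrices to elementary matrices, permutation matrices to permutation matrices, and $A(a)$ to $A(b)$ for every square $A$ over $\px{x_1,\dots,x_m}$ (note $A(a)\in\opm_n(D_0)$). Since $\varphi_n(A(a))=A(b)$, condition (2) follows once we show: for every $M\in\opm_n(D_0)$, if $\varphi_n(M)$ is invertible over $E$ then $M$ is invertible over $D$. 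I would prove this by induction on $n$. For $n=1$, $\varphi(M)\neq0$ forces $M$ to be a unit of $D_0$ — exactly the contrapositive of locality — hence $M$ is invertible in $D$. For $n>1$, not all entries of $M$ lie in $\ker\varphi$ (else $\varphi_n(M)=0$), so some entry is a unit of $D_0$; permuting rows and columns, assume it is the $(1,1)$ entry $p$. Clearing the first row and column by elementary operations over $D_0$ (possible since $p^{-1}\in D_0$) turns $M$ into a block matrix $p\oplus N$ with $N\in\opm_{n-1}(D_0)$ the Schur complement, without changing invertibility over $D$; applying $\varphi_n$ and using $\varphi(p)\neq0$ shows $\varphi_{n-1}(N)$ is invertible over $E$, so $N$ — and therefore $M$ — is invertible over $D$ by the inductive hypothesis.

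\emph{(2)$\Rightarrow$(1).} Here I would construct the local homomorphism explicitly. Let $D_0\subseteq D$ be the set of all elements $w(a)A(a)^{-1}u(a)$, where $A$ is a square matrix over $\px{x_1,\dots,x_m}$ with $A(b)$ invertible over $E$ (so $A(a)$ is invertible over $D$ by (2)) and $w,u$ are a row and a column over $\px{x_1,\dots,x_m}$ of matching size. Standard block-diagonal and block-triangular identities show that $D_0$ is a subring of $D$ containing $\kk$ and every $a_i$. Set $\varphi\big(w(a)A(a)^{-1}u(a)\big):=w(b)A(b)^{-1}u(b)$. The key point is well-definedness: given two representations of the same element of $D$, one combines them into a single identity $W(a)\mathcal A(a)^{-1}U(a)=0$ with $\mathcal A=A_1\oplus A_2$ (so $\mathcal A(b)$ is invertible over $E$), forms the bordered matrix $B=\left(\begin{smallmatrix}\mathcal A&U\\W&0\end{smallmatrix}\right)$ over $\px{x_1,\dots,x_m}$, and evaluates the factorization
\[
\left(\begin{smallmatrix}\mathcal A&U\\W&0\end{smallmatrix}\right)=\left(\begin{smallmatrix}I&0\\W\mathcal A^{-1}&I\end{smallmatrix}\right)\left(\begin{smallmatrix}\mathcal A&0\\0&-W\mathcal A^{-1}U\end{smallmatrix}\right)\left(\begin{smallmatrix}I&\mathcal A^{-1}U\\0&I\end{smallmatrix}\right),
\]
valid over any ring in which $\mathcal A$ is invertible, at both $a$ and $b$. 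Over $D$ the Schur complement $-W(a)\mathcal A(a)^{-1}U(a)$ vanishes, so $B(a)$ is not invertible over $D$; by (2) (contrapositive), $B(b)$ is not invertible over $E$; and since $\mathcal A(b)$ is invertible this forces $W(b)\mathcal A(b)^{-1}U(b)=0$, which unwinds to the equality of the two $b$-evaluations. Granting well-definedness, additivity, multiplicativity and $\kk$-linearity of $\varphi$ follow by reading the same block witnesses at $b$, and $\varphi(a_i)=b_i$ is immediate. Finally $\varphi$ is local: if $\varphi(d)\neq0$ for $d=w(a)A(a)^{-1}u(a)$, then $d\neq0$ (apply well-definedness to $d=0$), the bordered matrix $B$ now has $B(b)$ invertible over $E$, hence $B(a)$ invertible over $D$ by (2), and the bottom-right entry of $B(a)^{-1}$ equals $-d^{-1}$; reading it off exhibits $-d^{-1}$ in the defining form of $D_0$, so $d^{-1}\in D_0$ and $d$ is a unit of $D_0$.

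\emph{Main obstacle.} The whole argument rests on one observation, used for well-definedness: with $A(a)$ invertible, $w(a)A(a)^{-1}u(a)=0$ in $D$ is equivalent to the bordered matrix $\left(\begin{smallmatrix}A&u\\w&0\end{smallmatrix}\right)$ failing to be invertible over $D$ — a statement of exactly the form governed by (2). Once this reformulation is in hand, all remaining steps are routine bookkeeping with block matrices over skew fields, and the base case of the induction in (1)$\Rightarrow$(2) is nothing but the definition of a local homomorphism.
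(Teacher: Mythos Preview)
The paper does not prove this lemma itself; it merely records it with the citation \cite[Theorem 4.4.1]{Coh1}. Your argument is correct and self-contained: the Schur-complement induction for (1)$\Rightarrow$(2) and the explicit construction of $D_0$ and $\varphi$ via bordered matrices for (2)$\Rightarrow$(1) both go through as written, and the well-definedness step --- reducing the equality of two representations to the non-invertibility of a single bordered polynomial matrix, which is exactly the shape governed by (2) --- is indeed the hinge of the whole construction.
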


The universal property of $\rx{x_1,\dots,x_m}$ then states that for every skew field $E$ with elements $b_1,\dots,b_m$, there is a local homomorphism from $\rx{x_1,\dots,x_m}$ to $E$ mapping $x_i$ to $b_j$; equivalently, invertibility of $A(b_1,\dots,b_m)$ over $E$ (where $A$ is a matrix over $\px{x_1,\dots,x_m}$) implies invertibility of $A(x_1,\dots,x_m)$ over $\rx{x_1,\dots,x_m}$. This universal property is a consequence of the property of full matrices in item (c) above.

\section{Finite skew field extensions}\label{sec:general}

In this section we establish finite generation of skew fields whose finite extensions are finitely generated (Theorem \ref{theo:FinitelyGenerated}). For a comprehensive study of skew field extensions and the corresponding Galois theory, see \cite[Section 3]{Coh1} and \cite{Dre}.

Let $L\subseteq M$ be skew fields. To emphasize the left $L$-module and right $L$-module structure of $M$, we write ${}_LM$ respectively $M_L$. Then $\End(M)$, $\End({}_LM)$ and $\End(M_L)$ are the algebras of $\kk$-vector space endomorphisms,
left $L$-module endomorphisms and right $L$-module endomorphisms respectively. Similarly, $\dim M$, $\dim{}_LM$, $\dim M_L$ are the dimensions of $M$ as a $\kk$-vector space, left $L$-module and right $L$-module respectively. In general, $\dim M_L\neq \dim{}_LM$ (see \cite[Section 5.9]{Coh1} for an example). We start with a technical lemma.

\begin{lemma}\label{lem:complement}
Let $A_1,\dots,A_r,B$ be modules over a ring, and assume $A_1,\dots,A_r$ are simple. Then, there exists a subset $I\subseteq \{1,\dots,r\}$ such that 
$B+\sum_{i=1}^r A_i=B\oplus \bigoplus_{i\in I}A_i$.
\end{lemma}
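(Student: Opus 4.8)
The plan is to build the index set $I$ greedily, adding simple summands one at a time only when they genuinely enlarge the current module, and to use simplicity to guarantee that ``not enlarging'' forces ``being contained''.

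Concretely, I would proceed as follows. Set $S_0 = B$ and process the simple modules $A_1,\dots,A_r$ in order. Having defined $S_{j-1}$ (which will be of the form $B \oplus \bigoplus_{i \in I_{j-1}} A_i$ for some $I_{j-1} \subseteq \{1,\dots,j-1\}$), look at $A_j$: if $A_j \subseteq S_{j-1}$, set $S_j = S_{j-1}$ and $I_j = I_{j-1}$; otherwise set $S_j = S_{j-1} + A_j$ and $I_j = I_{j-1} \cup \{j\}$. After $r$ steps put $I = I_r$. The first thing to check is that in the ``otherwise'' branch the sum $S_{j-1} + A_j$ is actually direct, i.e.\ $S_{j-1} \cap A_j = 0$: since $A_j$ is simple, $S_{j-1} \cap A_j$ is a submodule of $A_j$, hence either $0$ or all of $A_j$; the latter would mean $A_j \subseteq S_{j-1}$, contradicting the branch we are in. Combined with the inductive hypothesis that $S_{j-1} = B \oplus \bigoplus_{i \in I_{j-1}} A_i$, this gives $S_j = B \oplus \bigoplus_{i \in I_j} A_i$, so the inductive form is preserved.

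It then remains to identify the final module $S_r$ with $B + \sum_{i=1}^r A_i$. The inclusion $S_r \subseteq B + \sum_i A_i$ is clear since every summand appearing in $S_r$ is one of $B, A_1,\dots,A_r$. For the reverse inclusion it suffices to show $A_j \subseteq S_r$ for each $j$: if $j \in I$ this is immediate, and if $j \notin I$ then by construction we were in the branch $A_j \subseteq S_{j-1}$, and $S_{j-1} \subseteq S_r$ since the $S_j$ form an increasing chain. Hence $B + \sum_{i=1}^r A_i \subseteq S_r$, giving equality, and therefore $B + \sum_{i=1}^r A_i = B \oplus \bigoplus_{i \in I} A_i$ as desired.

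There is no real obstacle here; the only point requiring the simplicity hypothesis is the dichotomy $S_{j-1} \cap A_j \in \{0, A_j\}$ used both to make the added sums direct and to justify the case split, so the argument would fail verbatim for non-simple $A_i$. I would present the proof as a short induction on $r$ rather than spelling out the chain $S_0 \subseteq \dots \subseteq S_r$ explicitly, but the greedy description above is the cleanest way to see why it works.
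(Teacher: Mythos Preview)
Your proof is correct and is essentially the same argument as the paper's: both run an induction on $r$, using simplicity of each $A_j$ to force the dichotomy $A_j\subseteq S$ or $A_j\cap S=0$ and thereby decide whether to include $j$ in $I$. The only cosmetic difference is that the paper peels off $A_r$ and reduces to $r-1$, whereas you build the chain $S_0\subseteq\cdots\subseteq S_r$ forward.
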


\begin{proof}
We prove the statement by induction on $i$. The case $i=0$ is obvious. If $A_r\cap B=\{0\}$
then we have $B+\sum_{i=1}^r A_i=A_r\oplus B+\sum_{i=1}^{r-1}A_i$ and we use induction. Otherwise, $A_r\cap B=A_r$ because $A_r$ is simple. In this case,  $A_r\subseteq B$ and we have $B+\sum_{i=1}^r A_i=B+\sum_{i=1}^{r-1} A_i$. Again, the lemma follows by induction.
\end{proof}

Suppose that $L$ and $M$ are skew fields with $L\subseteq M$, and $V$ is an $m$-dimensional $\kk$-subspace of $M$ that generates $M$ as a skew field over $\kk$. If we set $V_+=\kk+V$, then $\kk\lfre V\rfre=\bigcup_{i=0}^\infty V_+^i$ is the associative algebra generated by $V$. In the case where $\dim M_L$ is finite, the proposition  below constructs a $\kk$-subspace $W\subseteq \kk\lfre V\rfre$ such that $WL=M$ and $\dim W=\dim M_L$. This means that any $\kk$-basis of $W$ is a basis of $M$ as a right $L$-module. The subspace $W$ is constructed inductively and greedily as a direct sum $W=W_1\oplus W_2\oplus \cdots$ such that $W\cap V_+^k=W_1\oplus W_2\oplus \cdots \oplus W_k$ contains an $L$-basis of $V_+^kL$ for all $k$. 
This proposition will be useful for finding small generating sets.

\begin{proposition}\label{prop:Ws}
Let $L\subseteq M$ be skew fields. Suppose an $m$-dimensional $\kk$-subspace $V\subseteq M$ generates the skew field $M$ over $\kk$, and $\dim M_L$ is finite.
There exists a $\kk$-subspace $W$ of $M$ with the following properties:
\begin{enumerate}
    \item $WL=M$ and $\dim W=\dim M_L$;
    \item $\kk\subseteq W\subseteq \kk+VW$.
\end{enumerate}
\end{proposition}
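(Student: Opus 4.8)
The plan is to prove the proposition in two stages. The first, and main, stage is to show that $\kk\lfre V\rfre\cdot L=M$. The second is to carve $W$ out of $\kk\lfre V\rfre$ by a greedy basis extension along the filtration $V_+^0\subseteq V_+^1\subseteq\cdots$, so that property~(2) comes for free from the shape of that filtration.

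For the first stage, set $T=\kk\lfre V\rfre\cdot L$. This is a $\kk$-subspace of $M$ containing $1$; it is a left $\kk\lfre V\rfre$-module since $\kk\lfre V\rfre\,\kk\lfre V\rfre=\kk\lfre V\rfre$, and a right $L$-module since $LL=L$. The point is that, as a right $L$-submodule of $M$, it satisfies $\dim T_L\le\dim M_L<\infty$. I would then consider the ``order''
\[
\mathcal O=\{a\in M: aT\subseteq T\}.
\]
It is immediate that $\mathcal O$ is a $\kk$-subalgebra of $M$ and that $\kk\lfre V\rfre\subseteq\mathcal O$ (so $V\subseteq\mathcal O$ and $\kk\subseteq\mathcal O$). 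The key observation is that $\mathcal O$ is closed under inversion: if $0\ne a\in\mathcal O$, then left multiplication by $a$ is an injective right $L$-linear endomorphism of the finite-dimensional right $L$-space $T$, hence bijective, so $aT=T$; then $1\in T=aT$ forces $a^{-1}\in T$, and $a^{-1}T=a^{-1}(aT)=T$, so $a^{-1}\in\mathcal O$. Thus $\mathcal O$ is a skew subfield of $M$ containing $V$ and $\kk$, and since $V$ generates $M$ as a skew field over $\kk$ we get $\mathcal O=M$. In particular $MT\subseteq T$, and using $1\in T$ this gives $M=M\cdot1\subseteq T$, i.e.\ $\kk\lfre V\rfre\cdot L=M$. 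Note this argument uses finiteness of $\dim M_L$ only through $T$, never through $\mathcal O$ itself.

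For the second stage, the right $L$-subspaces $V_+^kL$ form an ascending chain whose union is $\kk\lfre V\rfre\cdot L=M$ and whose dimensions are bounded by $d:=\dim M_L$; hence $\dim(V_+^kL)_L$ stabilizes at $d$, and $V_+^kL=M$ for all $k$ beyond some $k_0$. I would build an ascending chain of finite-dimensional $\kk$-subspaces $W^{(0)}\subseteq W^{(1)}\subseteq\cdots$ maintaining, for each $k$, the invariants: (i) $W^{(k)}\subseteq V_+^k$; (ii) $W^{(k)}\subseteq\kk+VW^{(k)}$; (iii) some $\kk$-basis of $W^{(k)}$ is an $L$-basis of $V_+^kL$. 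Start with $W^{(0)}=\kk$. Given $W^{(k-1)}$, the identities $V_+^k=V_+^{k-1}+VV_+^{k-1}$ and $V_+^{k-1}L=W^{(k-1)}L$ (the latter from (iii)) give
\[
V_+^kL=V_+^{k-1}L+(VW^{(k-1)})L ,
\]
and $(VW^{(k-1)})L$ is a finite sum of cyclic right $L$-modules $(vw)L$ (with $v,w$ running over $\kk$-bases of $V$ and $W^{(k-1)}$), each simple or zero because $L$ is a skew field. Applying Lemma~\ref{lem:complement} with base $B=V_+^{k-1}L$ yields a finite subset of these elements $vw\in VW^{(k-1)}$ whose images complete the $L$-basis of $V_+^{k-1}L$ (which lies in $W^{(k-1)}$ by (iii)) to an $L$-basis of $V_+^kL$; take $W^{(k)}$ to be $W^{(k-1)}$ plus the $\kk$-span of these elements. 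Then (iii) holds by construction; (i) holds since $VW^{(k-1)}\subseteq V_+^k$; and (ii) holds since $W^{(k)}\subseteq W^{(k-1)}+VW^{(k-1)}\subseteq\kk+VW^{(k-1)}\subseteq\kk+VW^{(k)}$, using (ii) for $W^{(k-1)}$. Once $V_+^kL=M$ no further elements are added, so the chain stabilizes; setting $W=W^{(k_0)}$ gives $WL=V_+^{k_0}L=M$, $\dim W=\dim(V_+^{k_0}L)_L=d=\dim M_L$, $\kk=W^{(0)}\subseteq W$, and $W\subseteq\kk+VW$, which are precisely~(1) and~(2).

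I expect the only real obstacle to be the first stage: once $\kk\lfre V\rfre\cdot L=M$ is known, the chain $V_+^kL$ is guaranteed to exhaust $M$ and the rest is routine linear algebra over the skew field $L$ packaged by Lemma~\ref{lem:complement}. The delicate point of the first stage is choosing the right object to which to apply the principle ``an injective endomorphism of a finite-dimensional space is bijective'': the order $\mathcal O$ works precisely because it need not itself be a right $L$-module — only $T$ need be.
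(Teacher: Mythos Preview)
Your proof is correct and follows essentially the same approach as the paper: both rely on the ``order'' argument (the stabilizer of a finite-dimensional right $L$-submodule is a skew subfield, hence all of $M$) together with the greedy extension via Lemma~\ref{lem:complement}. The only cosmetic difference is the order of operations---you apply the order argument to $\kk\lfre V\rfre\cdot L$ first and then build $W$, whereas the paper builds $W=\bigoplus_i W_i$ first (your $W^{(k)}$ is their $\bigoplus_{i\le k}W_i$) and only afterwards applies the order argument to $WL$.
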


\begin{proof}
Denote $V_+=V+\kk\subseteq M$. Below we use the conventions $V^0=V_+^0=\kk$.
We  inductively construct $\kk$-subspaces $W_i\subseteq M$ for $i\geq 0$ such that $W_0=\kk$ and  the following properties hold:
\begin{enumerate}
   \item $\dim W_i=\dim (W_iL)_L$ for $i\geq 0$;
   \item $V_+^iL+VW_iL=V_+^{i+1}L$ for $i\geq 0$;
    \item $W_i\subseteq VW_{i-1}\subseteq V^i$ for $i\geq 1$;
    \item  $V_+^{i-1}L\oplus W_iL=V_+^iL$ for $i\geq 1$.
\end{enumerate}
It is clear that (1) and (2) hold for $i=0$. Suppose that $i\geq 0$ and  we have already constructed the spaces $W_0,W_1,\dots,W_{i}$. We will construct $W_{i+1}$ with properties (1)--(4).

Choose a basis $a_1,\dots,a_r$ in $VW_{i}$ and let $A_i=a_iL$.
Then we have $V_+^{i}L+\sum_{j=1}^r A_j=V_+^{i+1}L$.
By Lemma \ref{lem:complement}, 
there exists a subset $I\subseteq \{1,\dots,r\}$ 
such that $V_+^{i}L\oplus \bigoplus_{j\in I} A_j=V_+^{i+1}L$.
Let $W_{i+1}$ be the $\kk$-span of $a_j$ for $j\in I$. 
Then, we have 
$V_+^{i}L\oplus W_{i+1}L=V_+^{i+1}L$ and $\dim W_{i+1}=|I|=\dim(W_{i+1}L)_L$. 
Thus, $W_{i+1}$ has properties (1), (3) and (4).
By (3), we have $W_{i+1}\subseteq V^{i+1}$, so 
\begin{multline*}
V_+^{i+1}L+VW_{i+1}L=(V_+^{i+1}+W_{i+1})L+VW_{i+1}L=
V_+^{i+1}L+(\kk+V)W_{i+1}L=
\\
=
V_+^{i+1}L+V_+W_{i+1}L=V_+(V_+^{i}L\oplus W_{i+1}L)=V_+V_+^{i+1}L=V_+^{i+2}L.
\end{multline*}

By induction it is clear that
$V_+^jL=\bigoplus_{i=0}^j W_iL$ and 
$$
\dim M_L \geq \dim (V_+^jL)_L=\sum_{i=0}^j \dim (W_iL)_L=\sum_{i=0}^j \dim W_i=\dim \bigoplus_{i=0}^j W_i.$$
 This shows that for some $s\in\N$, we have $0=W_{s+1}=W_{s+2}=\cdots$.
Define $W=\bigoplus_{i=0}^s W_i$. Consider the set 
$N=\{x\in M\colon xWL\subseteq WL\}$. It is clear that 
$N$ is a subalgebra of $M$. If $x\in N$ is nonzero,
then left multiplication by $x$ gives an isomorphism between $xWL$ and $WL$ as right $L$-modules. So $(xWL)_L$ is a right $L$-submodule of $(WL)_L$ of the same dimension. It follows that $xWL=WL$. But then $x^{-1}WL\subseteq WL$, 
and so $x^{-1}\in N$. 
This shows that $N$ is a skew subfield of $M$. It also contains
the space $V$ that generates $M$, so $N=M$.
Now  we have $(WL)_L=M_L$ and $\dim W=
\sum_{i=0}^s \dim W_i=\sum_{i=0}^s \dim (W_iL)_L=\dim (WL)_L=\dim M_L$.
Finally, we have $\kk=W_0\subseteq W=\kk+(\sum_{i=1}^s W_i)\subseteq \kk+\sum_{i=1}^s VW_{i-1}\subseteq \kk+VW$.
\end{proof}

\subsection{General finite generation}

Our first main result shows that skew fields with finitely generated finite extensions are themselves finitely generated.

\begin{theorem}\label{theo:FinitelyGenerated}
Let $L\subseteq M$ be skew fields. Suppose $M$ is generated by $m$ elements over $\kk$, and $d=\dim M_L$ is finite. Then, $L$ is generated over $\kk$ by at most $d^2(m-1)+d$ elements.
\end{theorem}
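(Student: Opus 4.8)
The plan is to use Proposition~\ref{prop:Ws} to produce a well-controlled $\kk$-subspace $W\subseteq M$ that is simultaneously a right $L$-basis-carrier and satisfies $\kk\subseteq W\subseteq \kk+VW$, and then to read off a generating set for $L$ from the ``structure constants'' of multiplication against $W$. Concretely, let $V$ be an $m$-dimensional $\kk$-subspace of $M$ generating $M$ over $\kk$ (take a spanning set of $m$ generators together with $1$, or just the span of the $m$ generators), and apply Proposition~\ref{prop:Ws} to get $W$ with $WL=M$, $\dim W=d$, and $\kk\subseteq W\subseteq\kk+VW$. Fix a $\kk$-basis $w_1=1,w_2,\dots,w_d$ of $W$; since $WL=M$ and $\dim M_L=d$, this basis is a free right $L$-module basis of $M$. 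The key observation is then that for any $v\in V$ and any basis element $w_j$, the product $w_j v$ lies in $M=WL=\bigoplus_k w_k L$, so there are unique coefficients $\lambda^{(v)}_{jk}\in L$ with $w_j v=\sum_{k=1}^d w_k\lambda^{(v)}_{jk}$. Let $L_0\subseteq L$ be the subfield generated over $\kk$ by all these coefficients $\lambda^{(v)}_{jk}$ as $v$ ranges over a $\kk$-basis of $V$ and $j,k\in\{1,\dots,d\}$; this is a subfield generated by at most $d^2 m$ elements. The claim I want is $L_0=L$, after which a small bookkeeping step trims the count down to $d^2(m-1)+d$.

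To see $L_0=L$, I would show that $N:=\sum_{k=1}^d w_k L_0$ is closed under left multiplication by $V$, hence by $\kk\lfre V\rfre=M$; the defining relations $w_j v=\sum_k w_k\lambda^{(v)}_{jk}$ give exactly $w_j V\subseteq N$ for the basis $w_j$, and since $w_1=1$ we get $VN\subseteq \sum_j w_j V L_0 \subseteq N\cdot L_0=N$ — wait, more carefully: $V N = V\bigl(\sum_k w_k L_0\bigr)$, and $V w_k$ is not directly controlled, only $w_k V$ is. So I should instead control $N$ as a \emph{right} module: $N=\sum_k w_k L_0$, and I want $N V\subseteq N$, i.e. $w_k v\in N$ for all $k$ and $v\in V$ — which is precisely the relation above. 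Thus $N$ is a right $L_0$-submodule of $M$ closed under right multiplication by $V$, hence by $\kk\lfre V\rfre=M$, so $NM\subseteq N$; since $1\in N$ this forces $N=M$. Now $M=N=\sum_k w_k L_0$ is spanned as a right $L_0$-module by the $d$ elements $w_k$; but $M$ is also free of rank $d$ as a right $L$-module on the same $w_k$, and $L_0\subseteq L$. A dimension/independence argument (the $w_k$ are right-$L$-independent, a fortiori right-$L_0$-independent, and they span $M$ over $L_0$, so they form an $L_0$-basis; comparing $M=\bigoplus w_k L_0=\bigoplus w_k L$ entrywise) forces $L_0=L$.

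The count $d^2 m$ must be improved to $d^2(m-1)+d$. The savings comes from exploiting $w_1=1$ and $\kk\subseteq W$: the relation $w_j v=\sum_k w_k\lambda^{(v)}_{jk}$ for $j=1$ reads $v=\sum_k w_k \lambda^{(v)}_{1k}$, which is just the (unique) expansion of $v\in M$ in the $w_k$-basis, and since $v\in V\subseteq \kk+VW\subseteq\ldots$ one can arrange a normalization. More to the point: choose the $\kk$-basis of $V$ cleverly or use that one may replace $V$ by $\kk+V'$ where $V'$ is $(m-1)$-dimensional if $1\in V$; since the generators of $M$ can be taken to include $1$ only at the cost of one dimension, and multiplication by $1$ contributes no new coefficients, the index $v$ effectively ranges over $m-1$ basis vectors, giving $d^2(m-1)$ coefficients from the ``$v\neq 1$'' part, plus the $d$ coefficients $\lambda^{(1)}_{1k}$ (equivalently, the expansion coefficients of the single extra generator, or the $d$ entries needed to handle the $w_k$ themselves). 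I expect this final optimization — tracking exactly which $d^2(m-1)+d$ coefficients suffice and why the $1$-generator and the $w_1=1$ row are ``free'' — to be the main bookkeeping obstacle; the conceptual core ($N=M$ via closure under right multiplication by $V$) is straightforward given Proposition~\ref{prop:Ws}.
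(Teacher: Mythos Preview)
Your overall strategy matches the paper's --- use Proposition~\ref{prop:Ws} to obtain $W$, extract structure constants of multiplication by $V$ against a basis $w_1=1,\dots,w_d$ of $W$, and show they generate $L$ --- but the execution has two genuine errors. First, with your convention $w_jv=\sum_k w_k\lambda^{(v)}_{jk}$, the claim ``$NV\subseteq N$, i.e.\ $w_kv\in N$'' is false: $N=\sum_k w_kL_0$ contains elements $w_k\ell$ with $\ell\in L_0$, and $(w_k\ell)v=w_k(\ell v)$ has no reason to lie in $N$ since $\ell$ and $v$ do not commute. You must use \emph{left} multiplication $vw_j=\sum_k w_k\mu^{(v)}_{kj}$ (as the paper does, via $\rho(x)=\lambda_x$); then $VN\subseteq N$ genuinely follows. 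Second, even granting $VN\subseteq N$, you assert ``$\kk\lfre V\rfre=M$'', which is false: $\kk\lfre V\rfre$ is the sub\emph{algebra} generated by $V$, not the skew subfield. You need to show $\{x\in M:xN\subseteq N\}$ is inverse-closed (it is, since $N\cong L_0^d$ as a right $L_0$-module and injective endomorphisms of finite-rank free modules over a skew field are surjective), but this argument is absent. The paper sidesteps both issues by working inside the matrix ring $\End(W)\otimes\kk(Z)$, which is automatically inverse-closed within $\End(W)\otimes L$.

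Your count optimization also does not land. Neither ``$1\in V$'' (not assumed) nor ``the $w_1=1$ row is free'' (the expansion $v\cdot 1=v\in WL$ has arbitrary $L$-coefficients) yields the saving. The paper's saving comes from the other conclusion of Proposition~\ref{prop:Ws}: since $W\subseteq\kk+VW$, the space $W\cap VW$ has dimension at least $d-1$, and any $u\in W\cap VW$ written as $u=\sum_i v_iw_i$ with $v_i\in V$, $w_i\in W$ has expansion $u\otimes 1$ in $W\otimes L$, so these directions contribute only scalar structure constants. Lifting $W\cap VW$ to $T\subseteq V\otimes W$ with $\dim T\geq d-1$ and taking a complement $T^\perp$, one gets $Z=\rho''(T^\perp\otimes W^\star)$ with $\dim Z\leq d\cdot\dim T^\perp\leq d(dm-d+1)=d^2(m-1)+d$.
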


\begin{proof}
    Let $V$ be the $\kk$-linear span of the $m$ generators of $M$ over $\kk$. We use Proposition \ref{prop:Ws} to construct a
    subspace $W$ of $M$ with $WL=M$, $\dim W=\dim M_L=d$ and $\kk\subseteq W\subseteq \kk+VW$. 
    As right $L$-modules we have
    $M_L=(WL)_L\cong (W\otimes L)_L$.
    This gives an isomorphism $\End(M_L)\cong \End((W\otimes L)_L)=\End(W)\otimes L$. For $x\in M$, let $\lambda_x\in \End(M_L)$ be left multiplication by $x$. 
    Define $\rho:M\to \End(W)\otimes L$ by $\rho(x)=\lambda_x$.
    It is clear that $\rho$ is a $\kk$-algebra homomorphism and $\rho$ is injective because $M$ is a skew field and $\rho(1)\neq 0$.

We have isomorphisms
$$\Hom(M,\End(W)\otimes L)\cong \Hom(M\otimes W,W\otimes L)\cong \Hom(M\otimes W\otimes W^\star,L).$$ 
Thus, we can view 
$\rho$ in 3 different ways, namely
\begin{eqnarray*}
    \rho:M &\to & \End(W)\otimes L,\\
    \rho':M\otimes W &\to&  W\otimes L\\
    \rho'':M\otimes W\otimes W^\star & \to &  L.
\end{eqnarray*} 
    Since $W\subseteq \kk+VW$, the subspace $W\cap VW$ has dimension at least $d-1$. Note that $(VW\cap W)\otimes \kk$ is contained in $\rho'(V\otimes W)$. 
    So there exists a subspace $T\subseteq V\otimes W$ with $\dim T=\dim (W\cap VW)$ for which $\rho'(T)= (W\cap VW)\otimes \kk$. This means that $\rho''(T\otimes W^\star)= \kk$. 
    Let $T^\perp$ be a complement of $T$ in $V\otimes W$; its dimension is at most $dm-(d-1)$. Let $Z=\rho''(T^\perp\otimes W^\star)$. Then, 
    $$\dim Z\leq \dim T^{\perp}\otimes W^\star\leq  (dm-d+1)d=d^2(m-1)+d.$$
We have $\rho'(V\otimes W)=\rho'(T\oplus T^\perp)=\rho'(T)+\rho'(T^\perp)\subseteq W\otimes \kk+W\otimes Z$ and
$\rho(V)\subseteq \End(W)\otimes (\kk+Z)$.
Since $\rho(V)$ is contained in the algebra 
   $\End(W)\otimes \kk(Z)$ and $V$ generates $M$, we get $\rho(M)\subseteq \End(W)\otimes \kk(Z)$.
   If $x\in L$, then we have $x=\lambda_x(1)=\rho(x)(1\otimes 1)\in \kk(Z)$ because $\rho(x)\in \End(W)\otimes \kk(Z)$.
   Since $L\subseteq \kk(Z)$ and $Z$ is contained in $L$, we have $L=\kk(Z)$. This shows that $L$ is generated by at most $d^2(m-1)+d$ elements. 
\end{proof}

\begin{corollary}
Suppose $M$ is generated by $m$ elements over $\kk$, and $d=\dim {}_LM$ is finite. 
Then, $L$ is generated over $\kk$ by at most $d^2(m-1)+d$ elements.
\end{corollary}

\begin{proof}
Taking the opposite rings, we have $\kk\subseteq L^{\rm op}\subseteq M^{\rm op}$ and $\dim M^{\rm op}_{L^{\rm op}}=\dim{}_LM=d<\infty$. By the previous theorem, $M^{\rm op}$ is generated by at most $d^2(m-1)+d$ generators, so the same is true for $M$.    
\end{proof}

A natural source of finite skew field extensions are invariants of finite groups. 
To see this, we first record a weak noncommutative version of Dedekind's lemma (for a different generalization, see \cite[Theorem 4]{Dre} or \cite[Theorem 3.3.2]{Coh1}).

\begin{lemma}\label{l:dedekind}
Let $\varphi_1,\dots,\varphi_d$ be distinct automorphisms of a skew field $M$, and let $L$ be their fixed skew subfield in $M$. Then, $\varphi_1,\dots,\varphi_d$ are $L$-linearly independent in $\End(M_L)$.
\end{lemma}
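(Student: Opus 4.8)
The plan is to adapt the classical proof of Dedekind's lemma on independence of characters to the noncommutative setting, being careful about which side the scalars act on. Suppose for contradiction that $\varphi_1,\dots,\varphi_d$ are $L$-linearly dependent in $\End(M_L)$, and take a dependence relation $\sum_{i=1}^n c_i \varphi_i = 0$ (as a map $M\to M$, where the $\varphi_i$ are applied first and then multiplied on the left by $c_i\in M$) with the smallest possible number $n\geq 1$ of nonzero coefficients $c_i$; relabel so that these are $\varphi_1,\dots,\varphi_n$. Note $n\geq 2$, since a single automorphism is injective hence nonzero in $\End(M_L)$, and we may normalize $c_1=1$. The relation reads $\sum_{i=1}^n c_i\, x^{\varphi_i} = 0$ for all $x\in M$.

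The key step is the standard substitution trick. Since $\varphi_1\neq\varphi_n$, pick $z\in M$ with $z^{\varphi_1}\neq z^{\varphi_n}$. Applying the relation to $zx$ gives $\sum_{i=1}^n c_i\, z^{\varphi_i} x^{\varphi_i} = 0$ for all $x$. On the other hand, left-multiplying the original relation (with argument $x$) by $z^{\varphi_1}$ gives $\sum_{i=1}^n z^{\varphi_1} c_i\, x^{\varphi_i} = 0$. Here I must be cautious: in the commutative proof one would subtract these, but $z^{\varphi_1}$ and $c_i$ need not commute, and the coefficients $c_i z^{\varphi_i}$ versus $z^{\varphi_1}c_i$ sit on the correct (left) side of $x^{\varphi_i}$, so the subtraction is still legitimate in $\End(M_L)$. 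Subtracting yields $\sum_{i=2}^n \big(c_i z^{\varphi_i} - z^{\varphi_1} c_i\big) x^{\varphi_i} = 0$ for all $x$, a shorter relation. By minimality every coefficient vanishes; in particular $c_n z^{\varphi_n} = z^{\varphi_1} c_n$. But I need to extract a contradiction from $z^{\varphi_1}\neq z^{\varphi_n}$, and this only says $c_n z^{\varphi_n} c_n^{-1} = z^{\varphi_1}$, i.e. conjugation, not equality — so the naive argument stalls.

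The main obstacle is exactly this: left-multiplication by a non-central element does not commute past the $c_i$, so the one-variable substitution argument does not directly kill the shorter relation. The fix is to run the substitution argument more symmetrically, or to exploit that the statement only claims independence over $L$, not over $M$. Concretely, I expect the cleanest route is: assume a dependence $\sum_{i=1}^n c_i\varphi_i=0$ with $c_i\in L$ (not merely in $M$) of minimal length. Since $c_i\in L$, we have $c_i = c_i^{\varphi_j}$ for all $i,j$, so applying the relation to $zx$ and comparing with $z^{\varphi_1}$ times the relation now genuinely works: from $\sum c_i z^{\varphi_i}x^{\varphi_i}=0$ and $\sum c_i z^{\varphi_1}x^{\varphi_i}=0$ (using $c_i z^{\varphi_1} = z^{\varphi_1}$ applied... no). Let me instead subtract after putting the scalar on the far right is impossible; the correct move is to apply the functional $\sum c_i\varphi_i$ to $xz$ (argument on the right): this gives $\sum c_i (xz)^{\varphi_i} = \sum c_i x^{\varphi_i} z^{\varphi_i} = 0$, and since each $z^{\varphi_i}$ now sits to the right of $x^{\varphi_i}$, this is the relation $\sum c_i\varphi_i$ composed with right-multiplication by $z^{\varphi_i}$ — these don't assemble into a single element of $\End(M_L)$ unless the $z^{\varphi_i}$ agree. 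The honest resolution: use that the relation $\sum c_i\varphi_i=0$ holds as an identity of maps, evaluate at $x$ and also multiply the identity at $x$ on the right by $z$, getting $\sum c_i x^{\varphi_i}z = 0$; then $\sum c_i\big(x^{\varphi_i}z^{\varphi_i}\varphi_i(?)\big)$ — I would write this out carefully, using right $L$-linearity combined with the fact that one can pick $z\in M$ (not $L$) realizing $z^{\varphi_1}\neq z^{\varphi_n}$ and evaluating at $xz$ versus right-multiplying at $x$ by $z$; the two agree only on the span where $\varphi_1$ and $\varphi_n$ coincide, producing the shorter relation $\sum_{i\geq2} c_i(z^{\varphi_i}-z)\,\text{(on the left of }\varphi_i\text{)}$. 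I expect after setting up left vs. right conventions precisely, the standard argument closes, and I would present it in that cleaned-up form, flagging that the subtlety is solely bookkeeping of the module side on which the coefficients and the test element $z$ act.
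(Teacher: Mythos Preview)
Your proposal has a genuine gap, and it is exactly the one you yourself flag without resolving. The classical Dedekind substitution trick does not close here: after substituting $zx$ and subtracting, the shorter relation has coefficients of the form $c_i\,z^{\varphi_i}-z^{\varphi_n}c_i$ (or the analogous right-sided version), which lie in $M$, not in $L$. Your minimality hypothesis is \emph{minimality among $L$-relations}, so it says nothing about this shorter $M$-coefficient relation. If instead you run the argument with $M$-coefficients from the start, minimality does force each new coefficient to vanish, but the conclusion is only $c_i\,z^{\varphi_i}=z^{\varphi_n}c_i$ for all $z$, i.e.\ $\varphi_n=\mathrm{Inn}(c_i)\circ\varphi_i$. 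Since the lemma does \emph{not} assume the $\varphi_i$ are pairwise non-inner-conjugate, this is not a contradiction. Your last paragraph cycles through several placements of $z$ and of the coefficients, but each variant runs into the same wall, and ``I expect the standard argument closes'' is not a proof. In fact the standard argument does not close without an additional idea beyond bookkeeping.

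The paper's proof avoids the substitution trick entirely. It works inside $\End(M_L)$ with the $L$-span $W$ of the $\varphi_i$ and produces points $x_1,\dots,x_d\in M$ so that the chain of evaluation kernels $W_k=\{\varphi\in W:\varphi(x_1)=\cdots=\varphi(x_k)=0\}$ strictly decreases. The only fact used at each step is that a nonzero element of $W\subseteq\End(M_L)$ is a nonzero map, hence has proper kernel, so one can pick $x_k$ outside it. No attempt is made to manufacture a shorter relation with $L$-coefficients, which is precisely where your approach stalls.
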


\begin{proof}
Let $W=\operatorname{span}_L\{\varphi_1,\dots,\varphi_d\}\subseteq\End(M_L)$. It suffices to find $x_1,\dots,x_d\in M$ such that $W_k=\{\varphi\in W\colon \varphi(x_1)=\cdots=\varphi(x_k)=0\}$ for $k=1,\dots,d$ has dimension $d-k$ as a right $L$-module ($k=d$ then implies the desired statement). 
We find the $x_j$ by induction on $k$. For $k=1$ take any nonzero $x_1\in M$. Now suppose that $\dim (W_{k-1})_L=d-(k-1)$. Choose a nonzero $\psi\in W_{k-1}$; its kernel is a proper $L$-submodule of $M$, so there exists $x_k\in M\setminus\ker\psi$. Then, $W_k=\{\varphi\in W_{k-1}\colon \varphi(x_k)=0\}$ has dimension $d-k$ over $L$.
\end{proof}

For a finite group $G$, we denote the group algebra of $G$ over a skew field $L$ by $L[G]$.
Suppose a finite group $G$ acts on a skew field $M$ by automorphisms. 
If we apply an automorphism $g\in G$
 to an element $y\in M$ then we may write $y^g$ for the result instead of $g(y)$ to ease the notation.
 The set $M^G$ of all elements of $M$ that are fixed by the group 
 is again a skew field. 
 
For later refinements, we require the following noncommutative analogs of Artin's theorem and the normal basis theorem. 
We say that a group $G$ of automorphisms on a skew field $M$ consists of outer automorphisms if the identity is the only inner automorphism of $M$ in $G$.

\begin{lemma}\label{l:fixed}
Let $G$ be a finite group of automorphisms on a skew field $M$, and $L=M^G$.
\begin{enumerate}
\item $\dim M_L,\dim {}_L M\le |G|$.
\item If $G$ is a group of outer automorphisms, then $M\cong\kk[G]\otimes L$ as an $L$-module and a $G$-space, and in particular $\dim M_L=\dim {}_L M= |G|$.
\item If $M$ is a free skew field, then $G$ is a group of outer automorphisms.
\end{enumerate}
\end{lemma}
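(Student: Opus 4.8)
The plan is to prove the three parts in order, using the earlier lemmas in this section. For part (1): since $G$ is finite with $|G|=d$, the elements of $G$ are distinct automorphisms of $M$ with fixed skew subfield $L=M^G$, so Lemma \ref{l:dedekind} shows that $G$ spans a $d$-dimensional right $L$-subspace of $\End(M_L)$, call it $\mathscr{A}=\operatorname{span}_L G$. I claim the right $L$-linear map $M_L\to \Hom(\mathscr{A}_L, M_L)$ sending $y$ to the evaluation-at-$y$ functional is injective: if $g(y)=0$ for all $g\in G$ then in particular $\id(y)=0$, i.e.\ $y=0$. Hence $\dim M_L\le \dim \Hom(\mathscr{A}_L,M_L)$; but this requires knowing $\dim \Hom(\mathscr{A}_L,M_L) = \dim\mathscr{A}_L = d$, which holds because $\mathscr{A}_L$ is free of rank $d$ (by Dedekind) and $\Hom$ into $M_L$ of a free rank-$d$ module is $M^d$ as a right $L$-module. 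Actually the cleanest route: the map $M\to\Hom(\mathscr{A},M)$ above, composed with the trace-type pairing, shows $M$ embeds into $L^d$ as a right $L$-module via $y\mapsto(\text{coordinates of the functional }g\mapsto g(y)\text{ in a dual basis})$ — I will unwind this carefully so that it lands in $L$-coordinates rather than $M$-coordinates. The symmetric argument on $M^{\op}$ (or just working with left modules directly) gives $\dim{}_LM\le|G|$.

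For part (2): assume $G$ consists of outer automorphisms. The group algebra $L[G]$ acts on $M$ (with $L$ acting by left multiplication and $G$ by automorphisms), giving an algebra map $L[G]\to\End(M_L)$ whose image is exactly $\mathscr{A}=\operatorname{span}_L G$. Since $G$ consists of outer automorphisms, I expect this map to be injective — this is the noncommutative Dedekind/Artin phenomenon, and it should follow from Lemma \ref{l:dedekind} plus an argument that $L[G]$ has dimension $|G|$ over $L$ on the left and that the map respects that. So $L[G]\cong\mathscr{A}$ as $L$-algebras. Now $M$ is a left module over $L[G]\cong\mathscr{A}\subseteq\End(M_L)$; by part (1) $\dim M_L=|G|=\dim_L L[G]$ (the upper bound from (1) and the lower bound $\dim M_L\ge\dim\mathscr{A}_L=|G|$ from Dedekind combine to equality). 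A dimension count then forces $M$ to be the regular representation: $M\cong L[G]$ as a left $L[G]$-module, hence $M\cong \kk[G]\otimes L$ as an $L$-module and $G$-space. To get this identification I would invoke that $L[G]$ is (by the injectivity above) a subalgebra of $\End(M_L)$ with $M_L$ faithful of the right dimension — i.e.\ $M$ is a cyclic $L[G]$-module of length equal to $|L[G]:L|$, so it is free of rank one. The "normal basis" assertion is exactly the statement $M\cong L[G]$ as left $L[G]$-modules, and $\dim M_L=\dim{}_LM=|G|$ is then immediate.

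For part (3): let $M=\rx{x_1,\dots,x_m}$ and suppose $g\in G$ is an inner automorphism, say $g(y)=aya^{-1}$ for some nonzero $a\in M$ and all $y\in M$. I must show $g=\id$, i.e.\ $a$ is central. The center of $\rx{x_1,\dots,x_m}$ is $\kk$ (for $m\ge2$; for $m=1$ the statement is trivial since then $M=\kk(x)$ is commutative and has no nontrivial inner automorphisms anyway), so it suffices to show $a\in\kk$. Since $g$ has finite order $n=\operatorname{ord}(g)$ dividing $|G|$, $a^n$ commutes with everything, so $a^n\in\kk$. Thus $a$ is algebraic over $\kk$, hence the subfield $\kk(a)$ it generates is a finite (commutative) extension of $\kk$ lying in $\rx{x_1,\dots,x_m}$. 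I then need the fact that $\rx{x_1,\dots,x_m}$ contains no algebraic extension of $\kk$ other than $\kk$ itself when... — this is false in general (e.g.\ $\kk$ need not be algebraically closed), so instead I argue via the Malcev-Neumann realization (fact (b) in the preliminaries): $\rx{x_1,\dots,x_m}$ embeds in the skew field of Malcev-Neumann series over the free group $F$ on $x_1,\dots,x_m$ with coefficients in $\kk$, and in that series field the subfield generated by $\kk$ and elements of $F$ has the property that any element whose conjugation action fixes all of $\rx{x_1,\dots,x_m}$ must have trivial support, hence lies in $\kk$. The cleanest version: if $a\notin\kk$ then $a$ has a nontrivial leading group-element $w\in F$, and conjugation by $a$ acts on each $x_i$ by shifting supports by $w$ on the left and $w^{-1}$ on the right; for this to equal the identity on $x_i$ we need $w$ central in $F$, forcing $w=e$ and (after peeling off the leading term inductively) $a\in\kk$.

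**The main obstacle.** Parts (1) and (2) are bookkeeping with the noncommutative Dedekind lemma and free-module dimension counts; the one subtlety is making sure the embedding in (1) produces $L$-coordinates, and that the faithful-module-of-right-dimension argument in (2) genuinely forces the regular representation (I may need to cite the structure of simple modules over $\End(M_L)$, or argue directly). The genuinely hard part is part (3): pinning down that an inner automorphism of the free skew field of finite order is trivial. The argument sketched above via Malcev-Neumann supports is the natural one, but carrying it out requires care about how conjugation interacts with infinite Malcev-Neumann sums and the well-ordering of supports; alternatively one can use the matrix-evaluation model (fact (a)) to show directly that $a$ commuting with a large enough family of matrix tuples of $x_1,\dots,x_m$ forces $a$ scalar. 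I expect the support/leading-term argument to be the most delicate step to make fully rigorous.
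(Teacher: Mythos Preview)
Your proposal has genuine gaps in each part.

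\textbf{Part (1).} The embedding $y\mapsto(g(y))_{g\in G}$ is right $L$-linear but lands in $M^{|G|}$, not $L^{|G|}$; since the image is isomorphic to $M_L$ itself, this is circular. There is no ``dual basis'' trick that converts the coordinates to $L$: the functional $g\mapsto g(y)$ is $M$-valued, and writing it in an $L$-basis of $\mathscr{A}$ still leaves $M$-valued coefficients. The paper's argument is different and does the necessary conversion from $M$-dependence to $L$-dependence by an averaging trick: given $f_1,\dots,f_{d+1}\in M$, the functions $F_i:g\mapsto f_i^g$ are $M$-dependent in $M^G$ (since $d+1>d$); applying $g^{-1}$ to the resulting relation and summing over $g$ turns the $M$-coefficients into $G$-invariant (hence $L$-) coefficients, after first multiplying through by a fixed $y$ with $\sum_g y^g\neq 0$ so that the averaged relation is nontrivial. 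This averaging step is the missing idea.

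\textbf{Part (2).} Your lower bound ``$\dim M_L\ge |G|$ from Dedekind'' is not justified: Lemma~\ref{l:dedekind} only gives $|G|$ independent elements of $\End(M_L)$, which yields $|G|\le(\dim M_L)^2$, not $|G|\le\dim M_L$. Likewise, ``a faithful module of the right dimension is the regular representation'' needs an argument (e.g.\ simplicity of $M$ as a module over the skew group ring $M\ast G$, which is where outerness enters). The paper bypasses all of this by citing the normal basis theorem for outer actions \cite[Section 4.1]{Dre}: there exists $x\in M$ whose $G$-orbit is an $L$-basis of $M$, and $g\otimes\ell\mapsto x^g\ell$ gives the isomorphism directly.

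\textbf{Part (3).} You correctly reach the point where $a^n\in\kk$, hence $a$ is algebraic over $\kk$. You then dismiss the conclusion ``so $a\in\kk$'' as ``false in general (e.g.\ $\kk$ need not be algebraically closed)'' and embark on a Malcev--Neumann leading-term argument. This dismissal is the error: it \emph{is} true that the free skew field contains no element algebraic over $\kk$ other than scalars, regardless of whether $\kk$ is algebraically closed; this is \cite[Exercise 6.2.4]{Coh1}, and it is exactly what the paper invokes. Combined with the fact that the center of $\rx{x_1,\dots,x_m}$ is $\kk$ for $m\ge 2$ \cite[Corollary 7.9.7]{Coh2} (and $M$ is commutative for $m=1$), the conclusion is immediate: any inner automorphism of finite order is the identity. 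Your support-shifting argument in the Malcev--Neumann model may be salvageable, but it is both unnecessary and, as you note, delicate to make rigorous for infinite well-ordered sums.
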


\begin{proof}
(1) By Lemma \ref{l:dedekind}, the automorphisms $g\in G$ are $L$-independent. In particular, their sum is a nonzero endomorphism of $M$, so there exists $y\in M$ such that $\overline{y}=\sum_{g\in G}y^g\neq0$. 
Let $d=|G|$, and let $f_1,\dots,f_{d+1}\in M$ be arbitrary. 
The set of functions from $G$ to $M$ is a free right $M$-module of rank $d$.
Define functions $F_i:G\to M$ by $F_i(g)=f_i^g$. Then $F_1,\dots,F_{d+1}$ are dependent.
After reordering we may assume that  $F_{d+1}=\sum_{i=1}^d F_ia_i$
for some $a_1,a_2,\dots,a_\ell\in M$.
That is, $f_{d+1}^gy=\sum_{i=1}^d f_i^ga_iy$ for every $g\in G$.
Therefore $f_{d+1}y^{g^{-1}}=\sum_{i=1}^\ell f_i (a_iy)^{g^{-1}}$ for all $g\in G$, and so
$$f_{d+1}\overline{y}=\sum_{i=1}^d f_i \overline{a}_i,\qquad \text{where}\quad \overline{a}_i=\sum_{g\in G}(a_iy)^g\in L.$$
Since $\overline{y}\in L$ is nonzero, we conclude that $f_1,\dots,f_{d+1}$ are linearly dependent over $L$. Hence, $\dim M_L\le d$. The inequality $\dim {}_LM\le d$ is proved analogously.

(2) This is a consequence of the normal basis theorem for finite groups of outer automorphisms \cite[Section 4.1]{Dre}, which states that there is $x\in M$ such that $\{x^g:g\in G\}$ has size $|G|$ and is a basis of $M$ as a right (or left) $L$-module. Such an $x$ then gives rise to an isomorphism $\kk[G]\otimes L\to M$ given by $g\otimes\ell \mapsto x^g\ell$. 

(3) By \cite[Exercise 6.2.4]{Coh1}, no element in $M\setminus\kk$ is algebraic over $\kk$. Also, $M$ is commutative if its rank is 1, and otherwise has center equal to $\kk$ by \cite[Corollary 7.9.7]{Coh2}.
Consequently, an inner automorphism of $M$ is either the identity or has infinite order. Therefore, the identity is the only inner automorphism of $M$ in $G$. 
\end{proof}

In general, the inequality in Lemma \ref{l:fixed}(1) is strict (see the example after \cite[Theorem~3.3.7]{Coh1}). 

\subsection{Linear group actions}
In this section we will assume that $G$ is a finite group such that $\kar(\kk)\nmid |G|$. This implies that $G$ is linearly reductive.
We summarize some consequences of this.
Whenever $X$ is a subrepresentation of a finite dimensional representation $Y$, it has a $G$-stable complement $X^\perp$, so $Y=X\oplus X^\perp$. 
Every representation of $G$ is a direct sum of irreducible representations.
A linear map between representations $\phi:X\to Y$ is $G$-equivariant if $\phi(x^g)=\phi(x)^g$ for all $x\in X$ and $g\in G$. In that case we have $\phi(X^G)=Y^G$.

We can strengthen Theorem \ref{theo:FinitelyGenerated} for skew subfields of invariants for finite linear groups. We start by refining Proposition \ref{prop:Ws}.

\begin{proposition}\label{prop:Ws_lin}
Let $G$ be a finite outer group of automorphisms on a skew field $M$, and $\kar\kk\nmid |G|$. Suppose there is an $m$-dimensional $G$-stable $\kk$-subspace of $M$ that generates $M$ over $\kk$. 
Then, there exists a $G$-stable $\kk$-subspace $W$ of the subalgebra in $M$ generated by $V$, with the following properties:
\begin{enumerate}
    \item $WM^G=M$ and $\dim W=\dim M_{M^G}$;
    \item $\kk\subseteq W\subseteq \kk+VW$;
    \item $W^\star\cong \kk[G]$ as $G$-spaces.
\end{enumerate}
\end{proposition}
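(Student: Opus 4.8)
The plan is to carry out the construction of Proposition~\ref{prop:Ws} $G$-equivariantly and then read off the statement about the dual. Write $L=M^G$ and $V_+=\kk+V$. Two facts are available at the start. First, since $G$ consists of outer automorphisms, Lemma~\ref{l:fixed}(2) gives $\dim M_L=|G|$ and an isomorphism $M\cong\kk[G]\otimes_\kk L$ of right $L$-modules and $G$-spaces; thus (1) asks for a $G$-stable $\kk$-form $W$ of $M$, necessarily of dimension $|G|$, lying in the subalgebra $\kk\langle V\rangle$ (which right-$L$-spans $M$ by Proposition~\ref{prop:Ws}) and satisfying the degree bound (2). Second, because $\kar\kk\nmid|G|$, the group ring $L[G]:=L\otimes_\kk\kk[G]$ is semisimple (Maschke's averaging works over the division ring $L$), and since the right $L$-action and the $G$-action on $M$ commute, $M$ and every $X\otimes_\kk L$ (with $X$ a finite-dimensional $\kk[G]$-module) are $L[G]$-modules; hence every $G$-stable right $L$-submodule of them admits a $G$-stable right $L$-module complement. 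This last point is the equivariant replacement for the ``pick a subset of a basis'' step (Lemma~\ref{lem:complement}) in the proof of Proposition~\ref{prop:Ws}.

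With these in hand I would mimic that proof: set $W_0=\kk$ and, given $G$-stable $W_0,\dots,W_i$ with $W_j\subseteq VW_{j-1}\subseteq V^j$ and $V_+^{j-1}L\oplus W_jL=V_+^jL$, use the equivariant splitting to pick a $G$-stable right $L$-module complement $C$ of $V_+^iL$ inside $V_+^{i+1}L$ with $C\subseteq(VW_i)L$, and then a $G$-stable $\kk$-subspace $W_{i+1}\subseteq VW_i$ that is an $L$-basis of $C$. Granting the last choice, the remainder is verbatim as in Proposition~\ref{prop:Ws}: the dimensions $\sum_{j\le i}\dim W_j=\dim_L V_+^iL$ are bounded by $\dim_L M=|G|$, so the process stabilizes at $W=\bigoplus_{j\le s}W_j$; the argument that $N=\{x\in M:xWL\subseteq WL\}$ is a skew subfield containing $V$ gives $WL=M$; and $\dim_\kk W=|G|$ together with $\kk\subseteq W\subseteq\kk+VW$ follow from $W_0=\kk$ and $W_j\subseteq VW_{j-1}$.

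The hard part, I expect, is producing the $\kk$-subspace $W_{i+1}$: a $G$-stable $\kk$-subspace of $VW_i$ mapping isomorphically onto the graded piece $C\cong V_+^{i+1}L/V_+^iL$ need not exist, since an isotypic component of that piece over a division algebra $\End_{\kk[G]}(S)\otimes_\kk L$ may fail to be free — whereas the isotypic components of $M\cong\kk[G]\otimes_\kk L$ are free. So the construction cannot be purely greedy, and the plan is to inject the global structure $M\cong\kk[G]\otimes_\kk L$ at the outset: for example, by first locating a free $L[G]$-generator of $M$ (a ``$\kk$-rational normal basis element'') inside $\kk\langle V\rangle$, which should be possible because $\kk\langle V\rangle\cdot L=M$ and the non-generators form a proper subset, and then distributing its $G$-orbit across the filtration $V_+^\bullet$ to recover both the relations $W_j\subseteq VW_{j-1}$ and $\kk\subseteq W$. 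Establishing the existence of such a $\kk$-rational generator, uniformly in $\kk$, is the technical core.

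Property (3) then comes essentially for free. Once $W$ is $G$-stable with $WL=M$ and $\dim_\kk W=|G|=\dim_L M$, the multiplication map $W\otimes_\kk L\to M$ is a surjection of free right $L$-modules of equal rank, hence an isomorphism $W\otimes_\kk L\cong M\cong\kk[G]\otimes_\kk L$ of $L[G]$-modules. For each irreducible $\kk[G]$-module $S$, flat base change along $\kk\to L$ yields $\Hom_{\kk[G]}(S,W)\otimes_\kk L\cong\Hom_{L[G]}(S\otimes_\kk L,\,W\otimes_\kk L)$, and similarly with $\kk[G]$ in place of $W$; comparing $\kk$-dimensions forces $W$ and $\kk[G]$ to contain every $S$ with the same multiplicity, so $W\cong\kk[G]$ as $\kk[G]$-modules and $W^\star\cong\kk[G]^\star\cong\kk[G]$, the left regular representation being self-dual.
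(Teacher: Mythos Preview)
Your plan diverges from the paper precisely where you flag the ``hard part,'' and that is where the gap lies. You propose to inject the global structure $M\cong\kk[G]\otimes_\kk L$ by locating a normal basis element inside $\kk\langle V\rangle$ and then ``distributing its $G$-orbit across the filtration $V_+^\bullet$,'' but you neither prove that such a $\kk$-rational generator exists in $\kk\langle V\rangle$ nor explain how the distribution step would recover the chain condition $\kk\subseteq W\subseteq\kk+VW$. As written, this is a strategy, not a proof.

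The paper sidesteps the issue by reversing the order of operations. Rather than first choosing an $L[G]$-complement $C$ and then searching for a $G$-stable $\kk$-form inside $VW_i$, it first decomposes the finite-dimensional $\kk[G]$-module $VW_i$ as $A_1\oplus\cdots\oplus A_r$ with each $A_j$ irreducible over $\kk[G]$, observes that each $A_jL\cong A_j\otimes_\kk L$ is a simple right $L[G]$-module, and then applies Lemma~\ref{lem:complement} directly in the category of right $L[G]$-modules to the family $\{A_jL\}_j$ with $B=V_+^iL$. This yields a subset $I$ with $V_+^iL\oplus\bigoplus_{j\in I}A_jL=V_+^{i+1}L$, and one simply sets $W_{i+1}=\bigoplus_{j\in I}A_j$, which is automatically a $G$-stable $\kk$-subspace of $VW_i$ of the correct dimension. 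So the construction \emph{is} purely greedy after all; no global normal basis element is needed. (Your worry about non-free isotypic components over $\End_{\kk[G]}(S)\otimes_\kk L$ is essentially the question of whether $A_j\otimes_\kk L$ is simple over $L[G]$; the paper asserts this, and that assertion is exactly what makes the greedy step go through.) Your derivation of~(3) from $W\otimes_\kk L\cong M\cong\kk[G]\otimes_\kk L$ matches the paper's, with the multiplicity comparison spelled out in more detail.
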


\begin{proof}
Let $L=M^G$; by Lemma \ref{l:fixed} we have $\dim M_L=|G|$. Let $L[G]$ be the group algebra of $G$ over $L$, where elements of $L$ commute with group elements in $G$. Since $\kar\kk\nmid |G|$, the group $G$ is linearly reductive and $L[G]$ is semisimple.

We follow the proof of Proposition \ref{prop:Ws} to construct a space $W$ with $WL=M$, $\dim W=\dim M_L$ and $\kk\subseteq W\subseteq \kk+VW$. However, we claim that we can choose $W$ such that it is stable under the action of $G$. Since $G$ is linearly reductive, any $G$-stable subspace in a representation of $G$ over $\kk$ has a complement. In the proof of Proposition \ref{prop:Ws}, we construct subspaces $W_i$ for $i\geq 0$. We claim that we can choose each $W_i$ to be $G$-stable. The space  $W_0=\kk$ it is clearly $G$-stable.
By induction we may assume that $W_i$ has been constructed, is $G$-stable and 
$V_+^iL+VW_{i}L=V_+^{i+1}L$. 
Since $VW_i$ is also $G$-stable, we can write $VW_{i}=A_1\oplus \cdots \oplus A_r$ where $A_1,\dots,A_r$ are irreducible representations of $G$. 
Now $A_iL\cong A_i\otimes L$ is a simple right $L[G]$-module for all $i$. 
By Lemma \ref{lem:complement}, we can take some subset 
$I\subseteq \{1,\dots,r\}$ such that 
$V_+^iL\oplus \bigoplus_{j\in I}A_jL=V_+^{i+1}L$. Now take $W_{i+1}=\bigoplus_{j\in I}A_j$. 
Then we have $V_+^iL\oplus W_{i+1}L=V_{+}^{i+1}L$ and
$W_{i+1}L=\bigoplus_{j=1}^r A_jL\cong \bigoplus_{j\in I} A_j\otimes L$, so $\dim (W_{i+1}L)_L=\sum_{j\in I} \dim (A_j\otimes L)_L
=\sum_{i\in I} \dim(A_j)=\dim(W_{i+1})$.
Now $W_{i+1}$ is $G$-stable because it is a right $L[G]$-module.

By the construction, $W=\bigoplus_i W_i$ is a $G$-stable subspace of the subalgebra in $M$ generated by $V$, and satisfies (1) and (2) as in Proposition \ref{prop:Ws}. 
Since $W\otimes L\cong M\cong \kk[G]\otimes L$ as $G$-spaces and $L$-modules by Lemma \ref{l:fixed}, it follows that $W\cong \kk[G]$. Therefore, $W^\star \cong \kk[G]$, and (3) holds.
\end{proof}

\begin{theorem}\label{theo:LinFinitelyGenerated}
Let $G$ be a finite group of outer automorphisms on a skew field $M$, and $\kar\kk\nmid |G|$. 
If $V$ is an $m$-dimensional $G$-stable $\kk$-subspace of $M$ that generates $M$ over $\kk$, 
then $M^G$ is generated by at most $|G|(m-1)+1$ generators.
\end{theorem}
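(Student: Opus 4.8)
The plan is to re-run the proof of Theorem~\ref{theo:FinitelyGenerated} with $L=M^G$, carrying the $G$-action through every step; the payoff is that the crude factor $\dim W^\star=d$ gets replaced by an exact count, improving the bound from $d^2(m-1)+d$ to $d(m-1)+1$ with $d=|G|$. First I would set $L=M^G$, note $\dim M_L=|G|=d$ by Lemma~\ref{l:fixed}(2), and apply Proposition~\ref{prop:Ws_lin} to get a $G$-stable subspace $W$ of the subalgebra generated by $V$ with $WL=M$, $\dim W=d$, $\kk\subseteq W\subseteq\kk+VW$, and $W^\star\cong\kk[G]$ as $G$-spaces. Exactly as in Theorem~\ref{theo:FinitelyGenerated}, the multiplication map is a right $L$-module isomorphism $\phi\colon W\otimes L\to M$, giving $\End(W)\otimes L\cong\End(M_L)$, and left multiplication defines an injective $\kk$-algebra homomorphism $\rho\colon M\to\End(W)\otimes L$ with the two further readings $\rho'\colon M\otimes W\to W\otimes L$ and $\rho''\colon M\otimes W\otimes W^\star\to L$.

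The new ingredient is equivariance. Let $G$ act on $M$ naturally and on $\End(W)\otimes L=W\otimes W^\star\otimes L$ by its action on $W$, the contragredient on $W^\star$, and trivially on $L$ (legitimate since $L=M^G$ and $W$ is $G$-stable). I would check that $\phi$ is $G$-equivariant and that for $g\in G$ the map $\sigma_g\colon y\mapsto y^g$ on $M$ corresponds under $\phi$ to $\tau_g\otimes\id_L$, where $\tau_g$ is the action of $g$ on $W$; since $\lambda_{x^g}=\sigma_g\lambda_x\sigma_g^{-1}$, this yields $\rho(x^g)=(\tau_g\otimes1)\,\rho(x)\,(\tau_g\otimes1)^{-1}$, so $\rho$, and hence $\rho'$ and $\rho''$, is $G$-equivariant. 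Now I re-run the greedy construction from Theorem~\ref{theo:FinitelyGenerated}: $W\cap VW$ is $G$-stable of dimension $\ge d-1$ with $(W\cap VW)\otimes\kk\subseteq\rho'(V\otimes W)$, and by linear reductivity of $G$ one can choose a \emph{$G$-stable} $T\subseteq V\otimes W$ with $\rho'|_T$ injective and $\rho'(T)=(W\cap VW)\otimes\kk$, together with a $G$-stable complement $T^\perp$ of $T$ in $V\otimes W$; then $\dim T^\perp\le dm-(d-1)=d(m-1)+1$, $\rho''(T\otimes W^\star)\subseteq\kk$, and I set $Z=\rho''(T^\perp\otimes W^\star)\subseteq L$.

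The dimension gain has two sources. Since $\rho''$ is $G$-equivariant and $Z\subseteq L=L^G$, averaging over $G$ shows $Z=\rho''\big((T^\perp\otimes W^\star)^G\big)$, so $\dim Z\le\dim(T^\perp\otimes W^\star)^G$; and since $W^\star\cong\kk[G]$, the standard isomorphism $U\otimes\kk[G]\cong\kk[G]^{\oplus\dim U}$ of $G$-spaces (valid for any finite-dimensional $G$-space $U$) gives $\dim(T^\perp\otimes W^\star)^G=\dim T^\perp\le d(m-1)+1$. The proof then closes verbatim as in Theorem~\ref{theo:FinitelyGenerated}: $\rho(V)\subseteq\End(W)\otimes(\kk+Z)\subseteq\End(W)\otimes\kk(Z)$, and because $V$ generates $M$ while a square matrix over the skew field $\kk(Z)$ that is invertible over $L$ is already invertible over $\kk(Z)$, one gets $\rho(M)\subseteq\End(W)\otimes\kk(Z)$; evaluating $\rho(x)$ for $x\in L$ at $1\otimes1$ (which lies in $W\otimes\kk(Z)$ since $1\in W$) forces $x\in\kk(Z)$, whence $L=\kk(Z)$ is generated by $\dim Z\le|G|(m-1)+1$ elements. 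I expect the main obstacle to be the equivariance bookkeeping: pinning down the precise $G$-action on $\End(W)\otimes L$ and on all three avatars of $\rho$, and justifying that $T$ and $T^\perp$ in the greedy construction can be taken $G$-stable — this is exactly where $\kar\kk\nmid|G|$ is used, alongside the self-dual regular-representation identity $\dim(U\otimes\kk[G])^G=\dim U$ that sharpens the bound.
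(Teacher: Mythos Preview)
Your proposal is correct and follows essentially the same route as the paper: invoke Proposition~\ref{prop:Ws_lin} for a $G$-stable $W$ with $W^\star\cong\kk[G]$, rerun the proof of Theorem~\ref{theo:FinitelyGenerated} choosing $T$ and $T^\perp$ $G$-stably, use $G$-equivariance of $\rho''$ to get $Z=\rho''\big((T^\perp\otimes W^\star)^G\big)$, and conclude via $\dim(U\otimes\kk[G])^G=\dim U$. The paper is terser about the equivariance of $\rho,\rho',\rho''$ (it just asserts that $\rho''$ is $G$-invariant since $G$ acts trivially on $L$), but your more explicit bookkeeping is sound and amounts to the same argument.
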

\begin{proof}
We follow the proof of Theorem \ref{theo:FinitelyGenerated} with $L=M^G$ by taking the group action into account. Let $W$ be as in Proposition \ref{prop:Ws_lin}.
Let $d=\dim W=\dim M_L$; by Lemma \ref{l:fixed} we have $d=|G|$. 
Let $\rho:M\to \End(W)\otimes L$, $\rho':M\otimes W\to W\otimes L$ and $\rho'':M\otimes W\otimes W^\star\to L$ be as in the proof of Theorem \ref{theo:FinitelyGenerated}. 
The subspace $W\cap VW$ is $G$-stable. We can find a $G$-stable subspace $T\subseteq V\otimes W$ with $\rho'(T)=(W\cap VW)\otimes \kk$. We can also choose a $G$-stable complement $T^\perp$ of $T$ in $V\otimes W$. Since $G$ acts by automorphisms on $M$ and trivially on $L=M^G$, the map $\rho'':V\otimes W\otimes W^\star \to L$ is $G$-invariant. 
In particular, 
 we have $Z=Z^G=\rho''(T^{\perp}\otimes W^\star)^G=\rho''((T^\perp\otimes W^\star)^G)$. 
Since $W^\star\cong \kk[G]$ and $(U\otimes \kk[G])^G\cong U$ for every $G$-space $U$, it follows that 
$\dim Z\leq \dim (T^\perp\otimes W^\star)^G= \dim (T^\perp\otimes \kk[G])^G= \dim T^{\perp}\leq md-(d-1)$.
\end{proof}

\section{Skew subfields of free skew fields}\label{sec:free}

We can enhance the results of Section \ref{sec:general} in the context of free skew fields. Throughout the section let $M=\rx{x_1,\dots,x_m}$ be the free skew field over $\kk$ generated by free variables $x_1,\dots,x_m$.

\subsection{Fundamental relations}

Let $L$ be a skew subfield of $M$ with $\dim M_L=d$. 
Let $W\subseteq M$, $\rho: M\to \End(W)\otimes L$ and $Z\subseteq L$ be as in Proposition \ref{prop:Ws} and the proof of Theorem \ref{theo:FinitelyGenerated}. 
Let us choose a basis of $W$ that starts with 1, and let $X_i\in L^{d\times d}$ be the matrix representing $\rho(x_i)$ relative to the chosen basis. Then $\kk 1+Z$ is the $\kk$-linear span of the entries of $X_1,\dots,X_m$. 

Let $z_1,\dots,z_t$ span $Z$. Then, there exist $d\times d$ matrices $A_{i0},\dots,A_{it}$ over $\kk$ such that 
\begin{equation}\label{e:Xi}
X_i=A_{i0}+z_1A_{i1}+\dots +z_tA_{it}.
\end{equation}
Let $\fz_1,\dots,\fz_t$ be auxiliary free variables, and denote 
\begin{equation}\label{e:fXi}
\fX_i := A_{i0}+\fz_1A_{i1}+\dots +\fz_tA_{it},
\end{equation}
viewed as a $d\times d$ matrix over $\px{\fz_1,\dots,\fz_t}\subset\rx{\fz_1,\dots,\fz_t}$. 
If $z_1,\dots,z_t$ form a basis of $Z$, the matrices $\fX_i$ are uniquely determined by this basis (which is typically the case one is interested in, but for the sake of a later use we only assume that $z_1,\dots,z_t$ span $Z$).
Using matrices \eqref{e:fXi}, we provide an explicit finite collection of relations for $L$ that are in a certain sense fundamental for $L$.

\begin{proposition}\label{p:freetest} 
With the notation as above, denote
\begin{align*}
\cR&=\Big\{ z_j(\fX_1,\dots,\fX_m)_{11}-\fz_j\colon j=1,\dots,t \Big\} \\
&\ \quad\cup
\Big\{ (z_j(\fX_1,\dots,\fX_m)_{\ell1}\colon j=1,\dots,t,\ \ell=2,\dots,d \Big\}.
\end{align*}
Let $D$ be a skew field generated by $a_1,\dots,a_t$ such that $r(a_1,\dots,a_t)=0$ for all $r\in\cR$. If there is a local homomorphism from $D$ to $L$ such that $a_j\mapsto z_j$ for $j=1,\dots,t$, then $D\cong L$.

In particular, if $\cR=\{0\}$, then $L$ is the free skew field of rank $t$.
\end{proposition}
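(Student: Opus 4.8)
The plan is to use the universal property of the free skew field (Lemma~\ref{l:univ}) to show that the local homomorphism $D \to L$, $a_j \mapsto z_j$, is actually an isomorphism. We already know $L = \kk(Z)$ is generated by $z_1,\dots,z_t$, so this map is surjective; the content is injectivity. Since a unital homomorphism of skew fields is automatically injective on its domain of definition, and $D$ is a skew field, it suffices to show that the map is \emph{globally} defined, i.e.\ that the local homomorphism is in fact an honest homomorphism $D \to L$ sending $a_j$ to $z_j$; then it is an embedding with dense image, hence an isomorphism. To get this, I would construct a local homomorphism in the \emph{reverse} direction $\rx{\fz_1,\dots,\fz_t} \to D$ sending $\fz_j \mapsto a_j$ (this always exists by universality of the free skew field), check that the relations $\cR$ force it to factor through $D$ in a compatible way, and then compose with $D \to L$ to recover the identity on generators.

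The key steps, in order: (i) Set $D_0 \subseteq \rx{\fz_1,\dots,\fz_t}$ to be the image of $D$'s presentation, or rather work directly with the universal local homomorphism $\pi \colon \rx{\fz_1,\dots,\fz_t} \to D$, $\fz_j \mapsto a_j$. (ii) Observe that the matrices $\fX_i$ over $\px{\fz_1,\dots,\fz_t}$ map under $\pi$ to matrices $\fX_i(a_1,\dots,a_t)$ over $D$, and under the substitution $\fz_j \mapsto z_j$ to the matrices $X_i$ over $L$, which by construction represent $\rho(x_i)$ and are therefore invertible over $L$ (or rather, the relevant full matrices built from the $x_i$ are invertible). (iii) The relations in $\cR$ say exactly that evaluating $z_j$ (as a noncommutative rational function of the $x_i$, equivalently of the matrix entries) at the matrices $\fX_i(a_1,\dots,a_t)$ recovers $a_j$ in the $(1,1)$ entry and kills the rest of the first column; this is the compatibility condition that lets one transport the rational calculus from $L$ back to $D$. (iv) Conclude that the local homomorphism $D \to L$ is defined on all of $D$: for any $r \in D$, write $r$ as a rational expression in the $a_j$, use $r = (\,r(z_1,\dots,z_t)(\fX_1,\dots,\fX_m)\,)$-type substitution and the relations $\cR$ to show the denominators that are invertible in $D$ map to invertible elements of $L$, via Lemma~\ref{l:univ} applied with the generators $a_j \mapsto z_j$. (v) Surjectivity is immediate since $L = \kk(z_1,\dots,z_t)$; injectivity is automatic; hence $D \cong L$. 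For the final sentence, when $\cR = \{0\}$ the hypotheses are vacuous at $D = \rx{\fz_1,\dots,\fz_t}$ itself (the universal local homomorphism $\rx{\fz_1,\dots,\fz_t} \to L$, $\fz_j \mapsto z_j$, always exists), so $\rx{\fz_1,\dots,\fz_t} \cong L$.

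The main obstacle I expect is step (iv): carefully verifying, via the matrix criterion of Lemma~\ref{l:univ}, that invertibility of a matrix $A(a_1,\dots,a_t)$ over $D$ implies invertibility of $A(z_1,\dots,z_t)$ over $L$. The trick is to realize each $z_j$ inside $\End(W) \otimes L \cong L^{d\times d}$ via $\rho$: one has $z_j = (\rho(z_j))_{11}$-type formulas, and more importantly $\rho$ embeds the subalgebra generated by the $z_j$ into $L^{d\times d}$, where the matrices $X_i = \fX_i(z_1,\dots,z_t)$ live. The relations $\cR$ are precisely engineered so that the assignment $z_j \mapsto \fX_j$-evaluated-rational-expression is consistent: $z_j$, thought of as a rational function, when fed the matrices $\fX_i(\underline{a})$ over $D$, returns something whose first column is the standard basis vector scaled by $a_j$. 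This means that any rational identity in $L$ among the $z_j$ is witnessed by a matrix identity over $D$, which pulls invertibility back. Making this precise requires keeping straight three layers — $D$, the matrix algebra $\End(W)\otimes D$, and $L$ — and is the technical heart; everything else is formal manipulation of local homomorphisms and the universal property.
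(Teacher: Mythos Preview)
Your proposal has the right ingredients but assembles them in the wrong direction, and step~(iv) as written does not go through.

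You want to upgrade the given local homomorphism $D\to L$ to a global one, and your plan is to show that if $A(a_1,\dots,a_t)$ is invertible over $D$ then $A(z_1,\dots,z_t)$ is invertible over $L$, ``via Lemma~\ref{l:univ} applied with the generators $a_j\mapsto z_j$.'' But Lemma~\ref{l:univ} in that direction says exactly the \emph{opposite}: the existence of the local homomorphism $D\to L$ is equivalent to invertibility over $L$ implying invertibility over $D$. So the hypothesis cannot be invoked for the implication you need, and even if you did prove that reverse implication, Lemma~\ref{l:univ} would only hand you a \emph{local} homomorphism $L\to D$, not a globally defined $D\to L$. Composing two local homomorphisms that agree on generators does not formally yield an isomorphism.

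The paper's argument runs in the other direction and avoids this trap. It constructs an \emph{honest} homomorphism $L\to D$, which (being a map of skew fields onto the generators $a_j$) is automatically an isomorphism. The key move---which your final paragraph gestures at but does not exploit---is to pass through the free skew field $M=\rx{x_1,\dots,x_m}$, not $\rx{\fz_1,\dots,\fz_t}$. One defines $\phi\colon \px{x_1,\dots,x_m}\to D^{d\times d}$ by $x_i\mapsto \fX_i(a_1,\dots,a_t)$ and shows it extends to a genuine embedding $\Phi\colon M\hookrightarrow D^{d\times d}$. Here the universal property of $M$ is used in full strength: one must check that every full matrix $A$ over $\px{x}$ becomes invertible over $D$; and \emph{this} is where the local homomorphism $D\to L$ enters, in the correct direction: $A(X_1,\dots,X_m)$ is invertible over $L$ (since $\rho$ embeds $M$ in $L^{d\times d}$), hence by Lemma~\ref{l:univ} the matrix $A(\fX_1,\dots,\fX_m)$ evaluated at $(a_1,\dots,a_t)$ is invertible over $D$. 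Once $\Phi$ exists, the relations $\cR$ say precisely that $\Phi(z_j)e_1=e_1a_j$, so $e_1$ is a joint eigenvector for $\Phi(L)$ and $u\mapsto \Phi(u)_{11}$ is the desired honest homomorphism $L\to D$.

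In short: do not try to globalize $D\to L$; instead build $M\hookrightarrow D^{d\times d}$, restrict to $L$, and read off the $(1,1)$-entry.
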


\begin{proof}
Note that the first column of $z_j(X_1,\dots,X_m)$ equals $z_j e_1$, where $e_1$ is the first standard basis vector in $\kk^d$. Indeed, $\rho$ is a homomorphism and thus
$z_j(\rho(x_1),\dots,\rho(x_m))=\rho(z_j)$, and $z_j \in L$ as an element of the free $L$-module $M=WL$ is simply the $z_j$-multiple of the first basis vector.

Next, we claim that the homomorphism $\phi:\px{x_1,\dots,x_m}\to D^{d\times d}$ defined by $x_i\mapsto \fX_i(a_1,\dots,a_t)$ extends to an embedding $\Phi:M\to D^{d\times d}$. 
Since $M$ is the universal skew field of fractions of $\px{x_1,\dots,x_m}$, its elements are entries of inverses of matrices over $\px{x_1,\dots,x_m}$ by \cite[Theorem 4.2.1]{Coh1}. 
Thus, it suffices to see the following for a square matrix $A$ over $\px{x_1,\dots,x_m}$: if $A$ is invertible over $M$, then the entry-wise application of $\phi$ maps it to an invertible matrix over $D$. 
Since $A$ is invertible over $M$ and $\rho:M\to L^{d\times d}$ is an embedding of a skew field, the matrix $A(X_1,\dots,X_m)$ is invertible over $L$. Consider the matrix $B=A(\fX_1,\dots,\fX_m)$ over $\px{\fz_1,\dots,\fz_t}$. 
Since $B(z_1,\dots,z_t)=A(X_1,\dots,X_m)$ is invertible over $L$, the matrix $B(a_1,\dots,a_t)$ is invertible over $D$ by Lemma \ref{l:univ}, as desired.

Finally, consider the restricted homomorphism of $\Phi|_L:L\to D^{d\times d}$. For every $j=1,\dots,t$, the first column of $\Phi(z_j)$ equals $a_je_1$ by the definition of $\cR$. Therefore, the map $L\to D$ given by $u\mapsto \Phi(u)_{11}$ is a surjective homomorphism of skew fields, and thus an isomorphism.
\end{proof}

More generally, the conclusion of Proposition \ref{p:freetest} holds even if the entries of $X_i$ are not affine in the generators of $L$, but rational in the generators of $L$.
Note that Proposition~\ref{p:freetest} reduces freeness of generators $z_1,\dots,z_t$ to noncommutative rational identity testing, which can be done efficiently \cite{Gar,DM,Iva}. 
In general, freeness translates to checking linear independence over the enveloping algebra $M\otimes M^\op$ via universal derivation modules (see \cite[Section 5.8]{Coh1} or \cite[Section 10]{Sch}); however, this does not evidently yield an effective algorithm for deciding freeness.

\subsection{Invariants in free skew fields}

Theorem \ref{theo:FinitelyGenerated} and Lemma \ref{l:fixed} imply the following.

\begin{corollary}\label{c:nonlin}
If $G$ is a finite group of automorphisms on $\rx{x_1,\dots,x_m}$, 
then the skew subfield $\rx{x_1,\dots,x_m}^G$ is generated over $\kk$ by at most $|G|^2(m-1)+|G|$ elements.
\end{corollary}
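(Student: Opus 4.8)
The plan is to combine the two results the excerpt has already assembled. First I would invoke Theorem~\ref{theo:FinitelyGenerated} with $L=\rx{x_1,\dots,x_m}^G$ and $M=\rx{x_1,\dots,x_m}$: the free skew field is generated over $\kk$ by the $m$ elements $x_1,\dots,x_m$, so the hypothesis that $M$ is generated by $m$ elements is immediate, and it remains only to bound $d=\dim M_L$ (equivalently one could bound $\dim{}_LM$ and apply the Corollary to Theorem~\ref{theo:FinitelyGenerated}).

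The bound on the $L$-dimension comes from Lemma~\ref{l:fixed}(1), which says that whenever a finite group $G$ acts by automorphisms on a skew field $M$ with $L=M^G$, one has $\dim M_L\le|G|$ and $\dim{}_LM\le|G|$; in particular $d$ is finite. Plugging $d=|G|$ into the conclusion of Theorem~\ref{theo:FinitelyGenerated} gives that $L$ is generated over $\kk$ by at most $d^2(m-1)+d=|G|^2(m-1)+|G|$ elements, which is exactly the claimed bound. (If $d<|G|$ the same bound holds a fortiori, since $d^2(m-1)+d$ is monotone in $d$ for $m\ge1$.)

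There is really no main obstacle here — both ingredients are in place and the argument is a two-line deduction. The only point requiring a word of care is that Theorem~\ref{theo:FinitelyGenerated} is stated for $d=\dim M_L$ while Lemma~\ref{l:fixed}(1) bounds both one-sided dimensions; either one suffices, using Theorem~\ref{theo:FinitelyGenerated} directly for the right dimension or its Corollary for the left dimension. So the proof is simply: apply Lemma~\ref{l:fixed}(1) to get $d=\dim M_L\le|G|<\infty$, then apply Theorem~\ref{theo:FinitelyGenerated}, and substitute.
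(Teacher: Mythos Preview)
Your proposal is correct and matches the paper's own argument exactly: the paper simply states that Theorem~\ref{theo:FinitelyGenerated} and Lemma~\ref{l:fixed} imply the corollary, which is precisely the two-step deduction you describe.
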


Let us further simplify the presentation of generators and their relations from Corollary~\ref{c:nonlin}. Denote $M=\rx{x_1,\dots,x_m}$ and $L=M^G$. Let $1=w_1,w_2,\dots,w_d$ be a basis of the vector space $W$ as in Proposition \ref{prop:Ws}; note that $d=|G|$ by Lemma \ref{l:fixed}. 
If $X_i\in L^{d\times d}$ is the matrix representing $\rho(x_i)$ relative to this basis, then
\begin{equation}\label{e:freeqs0}
x_iw_j = \sum_{k=1}^d w_k\cdot (X_i)_{kj} 
\quad \text{for }i=1,\dots,m,\ j=1,\dots,d.
\end{equation}
Applying an automorphism $g\in G$ to \eqref{e:freeqs0} gives
\begin{equation}\label{e:freeqs0g}
x_i^gw_j^g = \sum_{k=1}^d w_k^g\cdot (X_i)_{kj} 
\quad \text{for all $i$, $j$ and $g\in G$.}
\end{equation}

For $i=1,\dots,m$ let $\sD_i$ be the diagonal $d\times d$ matrix indexed by $g\in G$ whose $g$\textsuperscript{th} diagonal entry equals $x_i^g$. Consider the $d\times d$ matrix $\sW=(w_j^g)_{g,j}$, whose rows are indexed by $g\in G$, and columns are indexed by $j=1,\dots,d$. 

\begin{lemma}\label{l:Winvert}
The matrix $\sW$ is invertible over $M$. 
\end{lemma}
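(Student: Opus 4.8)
The plan is to exhibit $\sW$ as a full matrix over $\px{x_1,\dots,x_m}$-coefficients in a suitable sense, or more directly, to recognize $\sW$ as an invertible change-of-basis matrix. The key observation is that $M=\rx{x_1,\dots,x_m}$ has two natural right $L$-bases at hand: the basis $w_1,\dots,w_d$ of $W$ coming from Proposition \ref{prop:Ws}, which satisfies $WL=M$ and $\dim W=\dim M_L=|G|$ by Lemma \ref{l:fixed}; and, since $G$ consists of outer automorphisms on the free skew field $M$ by Lemma \ref{l:fixed}(3), the normal basis $\{x^g : g\in G\}$ furnished by Lemma \ref{l:fixed}(2), for a suitable $x\in M$. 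The matrix $\sW=(w_j^g)_{g,j}$ is precisely the matrix whose $g$\textsuperscript{th} row records the image of each $w_j$ under $g$; I want to interpret this as relating these two module structures.

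First I would set up the right $L[G]$-module structure on $M$. By Lemma \ref{l:fixed}(2), since $G$ is outer, $M\cong\kk[G]\otimes L$ as an $L$-module and a $G$-space; equivalently $M$ is a free right $L[G]$-module of rank one, say generated by some $x_0\in M$ with $\{x_0^g:g\in G\}$ an $L$-basis of $M$. Now consider the $\kk$-linear map $M\to \operatorname{Map}(G,M)$, $u\mapsto (g\mapsto u^g)$; restricted to the $L$-span of $w_1,\dots,w_d$ (which is all of $M$), and using coordinates from the normal basis $\{x_0^g\}$, one obtains that $\sW$ is, up to the invertible (over $M$) matrix recording $x_0^g$ in terms of itself, the matrix expressing the first basis in terms of the second. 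More concretely, I would argue: the rows of $\sW$ are $\{(w_1^g,\dots,w_d^g) : g\in G\}$; by Dedekind's lemma (Lemma \ref{l:dedekind}), the automorphisms $g\in G$ are right $L$-linearly independent in $\End(M_L)$, and since $w_1,\dots,w_d$ is a right $L$-basis of $M$, the columns of $\sW$ over $M$ cannot satisfy a left $L$-linear relation; dually the rows cannot satisfy a right $L$-linear relation. Translating "$\sW$ has no nontrivial left/right $L$-kernel" into honest invertibility over $M$ requires knowing $\sW$ is a square matrix over a skew field, which it is once we view its entries as lying in $M$.

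The cleanest route, which I would take in the writeup, is the following. Form the $d\times d$ matrix equation encoding \eqref{e:freeqs0g}: for the vector $\mathbf{w}=(w_1,\dots,w_d)^{\mathsf T}$ and its "twisted" version $\mathbf{w}^g=(w_1^g,\dots,w_d^g)^{\mathsf T}$, the relations \eqref{e:freeqs0} together with Lemma \ref{l:dedekind} let one show that $\sW^{\mathsf T}$ (whose columns are the $\mathbf{w}^g$) sends the standard basis of $M^d$ onto a right $L$-independent, hence right $L$-spanning (by a dimension count, $\dim M_L=d$), hence $M$-spanning set of vectors — so $\sW^{\mathsf T}$, and therefore $\sW$, is invertible over $M$. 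Alternatively, and perhaps most transparently: $\sW$ is the transition matrix between the $L$-basis $w_1,\dots,w_d$ of $M$ and the set $\{w_j^g\}$; if $\sW$ were not invertible over $M$ then its rows would be left-$M$-linearly dependent, giving $c_g\in M$, not all zero, with $\sum_g c_g w_j^g=0$ for all $j$, i.e.\ $\sum_g c_g\, g$ kills the $\kk$-span of $w_1,\dots,w_d$; but that $\kk$-span contains an $L$-basis of $M$, so $\sum_g c_g\, g=0$ as an element of $\operatorname{Hom}_{-L}(M,M)$ after clearing to $L$-coefficients — which contradicts Lemma \ref{l:dedekind}. I expect the main obstacle to be the bookkeeping in this last step: passing from an $M$-linear dependence among the rows to an $L$-linear dependence among the automorphisms $g\in G$ (one must multiply on the right by elements of $M$ and average, in the spirit of the proof of Lemma \ref{l:fixed}(1), to land the coefficients back in $L=M^G$), and being careful about left versus right module structures throughout.
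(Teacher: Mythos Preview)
Your overall strategy---assume $\sW$ is singular and derive a forbidden $L$-linear relation---is the right one, and your third route is close to the paper's. But your ``cleanest route'' contains a genuine gap: you assert that the rows of $\sW$, viewed as vectors in $M^d$, are right $L$-independent and ``hence right $L$-spanning (by a dimension count, $\dim M_L=d$)''. The dimension count is off by a factor of $d$: $M^d$ has right $L$-dimension $d^2$, so $d$ right $L$-independent vectors are nowhere near spanning, and right $L$-independence of the rows does not imply invertibility of $\sW$ over $M$.

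Your third route (a left row dependence $\sum_g c_g w_j^g=0$ with $c_g\in M$, then average into $L$ and invoke Lemma~\ref{l:dedekind}) can be made to work, but the obstacle you flag is real: on this side the averaging does not land the coefficients in $L$, and what you actually need is that the automorphisms $g$ are \emph{left $M$-independent}, which is stronger than Lemma~\ref{l:dedekind} and requires a separate Artin-type argument together with outerness of $G$ (Lemma~\ref{l:fixed}(3)). The paper sidesteps this by working with a \emph{column} dependence $\sW u=0$: the row indexed by $g$ reads $\sum_j w_j^g u_j=0$; applying $g^{-1}$ gives $\sum_j w_j\,u_j^{g^{-1}}=0$ for every $g$, and summing over $g$ yields $\sum_j w_j\,\overline{u}_j=0$ with $\overline{u}_j=\sum_{g} u_j^{g}\in L$. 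Scaling $u$ so that one entry equals a $y$ with $\sum_g y^g\neq 0$ (such $y$ exists by Lemma~\ref{l:dedekind}) guarantees some $\overline{u}_j\neq0$, contradicting the right $L$-independence of $w_1,\dots,w_d$. This is exactly the averaging trick you anticipated, but applied on the side where the $L$-basis is $\{w_j\}$ rather than $\{g\}$, so the coefficients land in $L$ automatically.
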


\begin{proof}
By Lemma \ref{l:dedekind}, there exists $y\in M$ such that $\sum_{g\in G}y^g\neq0$. Suppose $\sW$ is not invertible over $M$. Then there is a vector $u=(u_j)_j$ over $M$ such that $\sW u=0$ and $u_{j_0}=y$ for some $j_0$. Since $\sW u=0$ implies $\sum_jw_j^gu_j=0$ for all $g\in G$,
we  get 
$\sum_jw_ju_j^{g^{-1}}=\sum_j(w_j^gu_i)^{g^{-1}}=0$. Summing over all $g\in G$ gives $\sum_jw_j\overline{u}_j=0$ where $\overline{u}_j=\sum_{g\in G} u_j^g$. Since $\overline{u}_j\in L$ for all $j$, and $\overline{u}_{j_0}=\sum_{g\in G}y^g\neq0$, it follows that $w_1,\dots,w_d$ are linearly dependent over $L$, a contradiction.
\end{proof}

Since $\sW$ is invertible, \eqref{e:freeqs0g} can be rewritten as
\begin{equation}\label{e:freeqs2}
\sD_i \sW=\sW X_i\quad\mbox{or}\quad
X_i=\sW^{-1}\sD_i\sW \qquad \text{for } i=1,\dots,m,
\end{equation}
which is a convenient way of obtaining the generators $(X_i)_{kj}$ of $L$. Furthermore, Proposition~\ref{p:freetest} shows that 
\begin{equation}\label{e:fundamentalrelations}
z_j(\fX_1,\dots,\fX_m)e_1-\fz_je_1 \quad
\text{for }
j=1,\dots,t,\ \ell=1,\dots,d
\end{equation}
(where $e_1$ is the first standard unit vector) 
vanish (are constantly zero) when evaluated on $z_1,\dots,z_t$ forming a basis of $Z$, and are the fundamental relations satisfied by these generators.

There is a convenient way to reformulate the relations \eqref{e:fundamentalrelations} using a block matrix equation that does not involve choosing a basis of $Z$. Let
$X_i(\fX_1,\dots,\fX_m)$ be the block matrix obtained by replacing $x_k$ by $\fX_k$ for $i=1,\dots,m$ in the matrix $X_i$.
Using \eqref{e:Xi}, this is equal to
$$
A_{i0}\otimes I+A_{i1}\otimes z_1(\fX_1,\dots,\fX_m)+\cdots+A_{it}\otimes z_t(\fX_1,\dots,\fX_m),
$$
where $\otimes$ is the Kronecker tensor product of matrices.
After multiplying on the right by $I\otimes e_1$, we get
\begin{align*}
&\ X_i(\fX_1,\dots,\fX_m)(I\otimes e_1)-\fX_i\otimes e_1\\
=&\ A_{i0}\otimes e_1+A_{i1}\otimes z_1(\fX_1,\dots,\fX_m)e_1+\cdots+A_{it}\otimes z_t(\fX_1,\dots,\fX_m)e_1 \\
&\quad -(A_{i0}+\fz_1 A_{i1}+\cdots+\fz_tA_{it})\otimes e_1\\
=&\ A_{i1}\otimes (z_1(\fX_1,\dots,\fX_m)e_1-\fz_1e_1)+\cdots+
A_{it}\otimes (z_t(\fX_1,\dots,\fX_m)e_1-\fz_te_1)
\end{align*}
by \eqref{e:fXi}. Combining this equation with \eqref{e:freeqs2} shows that vanishing of \eqref{e:fundamentalrelations} implies vanishing of
\begin{equation}\label{e:freeqsM}
\sD_i(\fX_1,\dots,\fX_m)\sW(\fX_1,\dots,\fX_m) (I\otimes e_1) - \sW(\fX_1,\dots,\fX_m) (\fX_i\otimes e_1) \quad \text{for }i=1,\dots,m.
\end{equation}
But because the span of the entries of the $X_i$'s equals the span of $z_1,\dots,z_t$, vanishing of \eqref{e:freeqsM} also implies vanishing of \eqref{e:fundamentalrelations}.
The benefit of \eqref{e:freeqsM} is that these equations do not depend on the choice of a basis of $Z$ (as opposed to the relations in Proposition \ref{p:freetest}).

\subsubsection{Bivariate invariants of involutions}\label{ex:Z2}

Let us demonstrate the findings of this section on faithful actions of $\Z_2$ on $\rx{x,y}$. Such an action is given by an involution $\phi:\rx{x,y}\to\rx{x,y}$, a $\kk$-linear automorphism of order two. Without loss of generality let $x^\phi\neq x$. 
Then, the subspace $W=\operatorname{span}_{\kk}\{1,x\}$ satisfies the conclusions of Proposition \ref{prop:Ws}. The corresponding matrices $\sD_i$ and $\sW$ are
$$\sD_1=\begin{pmatrix}x&0\\0&x^\phi\end{pmatrix},\quad
\sD_2=\begin{pmatrix}y&0\\0&y^\phi\end{pmatrix},\quad
\sW=\begin{pmatrix}1& x\\ 1& x^\phi\end{pmatrix}.$$
Next, we calculate
$$\sW^{-1}=
\begin{pmatrix}1+x(x^\phi-x)^{-1}x^\phi& -x(x^\phi-x)^{-1}\\ -(x^\phi-x)^{-1}x^\phi& (x^\phi-x)^{-1}\end{pmatrix}.
$$
The matrices $X=\sW^{-1}\sD_1\sW$ and $Y=\sW^{-1}\sD_2\sW$ as in \eqref{e:freeqs2} are then equal to
\begin{equation}\label{e:XandY}
X=\begin{pmatrix}
    0 & z_1\\
    1 & z_2
\end{pmatrix}\quad\mbox{and}\quad Y=\begin{pmatrix}
    z_3 & z_5\\z_4 & z_6
\end{pmatrix},\quad \mbox{where}
\end{equation}
\begin{equation}\label{e:z2general}
\def\arraystretch{1.4}
\begin{array}{rcl}
    z_1 & = & x^\phi\big(x^\phi-x\big)^{-1}x^2-x\big(x^\phi-x\big)^{-1}(x^\phi)^2\\
    z_2 & = & \big(x^\phi-x\big)^{-1}\big((x^\phi)^2-x^2\big)\\
    z_3 & = &  x^\phi\big(x^\phi-x\big)^{-1}y-x\big(x^\phi-x\big)^{-1}y^\phi\\
    z_4 & = & \big(x^\phi-x\big)^{-1}\big(y^\phi-y\big)\\
    z_5 & = &  x^\phi\big(x^\phi-x\big)^{-1}yx-x\big(x^\phi-x\big)^{-1}y^\phi x^\phi\\
    z_6 & = & \big(x^\phi-x\big)^{-1}\big(y^\phi x^\phi-yx\big).\end{array}
\end{equation}
The six not-obviously-scalar entries $z_1,\dots,z_6$ of $X,Y$ generate $\rx{x,y}^{\Z_2}$ by Corollary~\ref{c:nonlin}. By Proposition \ref{p:freetest}, all relations between these generators are captured by the $6\cdot 2=12$ equations as in \eqref{e:freeqsM}.
In Section \ref{sec:nonfree} below we further investigate these equations for a specific nonlinear involution on $\rx{x,y}$, where we see that $\rx{x,y}^{\Z_2}$ is not necessarily free, nor can it be generated by three generators in general. But first, we analyze $\rx{x,y}^{\Z_2}$ for a linear action of $\Z_2$ on $\rx{x,y}$; that is, we assume that $x^\phi,y^\phi$ are linear combinations of $x,y$.
If $\kar\kk\neq 2$, then there are, up to isomorphism, only the following two examples of nontrivial $2$-dimensional representations of $\Z_2$:

\begin{example}
Let $\kar\kk\neq2$; and assume that $x^\phi=-x$ and $y^\phi=y$. Then, the  equations~\eqref{e:XandY} and \eqref{e:z2general} simplify to
$$
X=\begin{pmatrix}
0& x^2 
\\1&0
\end{pmatrix},\quad
Y=\begin{pmatrix}
y & 0 \\ 
0 & x^{-1}yx
\end{pmatrix}.
$$
In this case, $\rx{x,y}^{\Z_2}$ is generated by three generators: $x^2,y,x^{-1}yx$ or $x^2,yx,x^{-1}y$. Furthermore, these generators are free; since $\Z_2$ is abelian and $\kar\kk\neq2$, this holds by \cite[Theorem 4.1]{KPPV}, or by Theorem \ref{t:linfree} below for general finite linear groups. Alternatively, one can directly check that the six relations (two per generator of $\rx{x,y}^{\Z_2}$) in Proposition \ref{p:freetest} are trivial; for example, if $y^\phi=y$, the generators are $z_1=x^2,z_2=y,x_3=x^{-1}yx$, and the first columns of
\begin{align*}
&\begin{pmatrix}
0& \fz_1 
\\1&0
\end{pmatrix}^2=\begin{pmatrix}\fz_1&0\\0&\fz_1\end{pmatrix},\quad
\begin{pmatrix}
\fz_2& 0 
\\0&\fz_3
\end{pmatrix}=\begin{pmatrix}\fz_2&0\\0&\fz_3\end{pmatrix},\\
&\begin{pmatrix}
0& \fz_1 
\\1&0
\end{pmatrix}^{-1}\begin{pmatrix}
\fz_2& 0 
\\0&\fz_3
\end{pmatrix}\begin{pmatrix}
0& \fz_1 
\\1&0
\end{pmatrix}=\begin{pmatrix}\fz_3&0\\0&\fz_1^{-1}\fz_2\fz_1\end{pmatrix}
\end{align*}
equal $\fz_j$ times $e_1$, implying that there are no relations between $z_1,z_2,z_3$.
\end{example}

\begin{example}
Let $\kar\kk\neq2$ and assume that $x^\phi=-x$ and $y^\phi=- y$ after a linear change of coordinates. Then, from~\eqref{e:XandY} and \eqref{e:z2general} follows that
$$
X=\begin{pmatrix}
0& x^2 
\\1&0
\end{pmatrix},\quad
Y=\begin{pmatrix}
0 & yx\\ 
x^{-1}y & 0
\end{pmatrix}.
$$
In this case, $\rx{x,y}^{\Z_2}$ is generated by $x^2,yx,x^{-1}y$. The reader may verify, as in the previous example, that these are free generators.
\end{example}

If $\kar\kk=2$, then there is a unique nontrivial $2$-dimensional representation of $\Z_2$ up to isomorphism.

\begin{example}
Let $\kar\kk=2$ and can assume that $x^\phi=y$ and $y^\phi=x$. Then, it follows from \eqref{e:XandY} and~\eqref{e:z2general} that
$$
X=\begin{pmatrix}
0& x(x+y)^{-1}y(x+y) 
\\1& (x+y)^{-1}(x^2+y^2)
\end{pmatrix},\quad
Y=\begin{pmatrix}
x+y & xy+yx+x(x+y)^{-1}y(x+y)\\ 
1 & (x+y)^{-1}(xy+yx)
\end{pmatrix}.
$$
The entries of these two matrices generate $\rx{x,y}^{\Z_2}$; a short calculation shows that the diagonal entries of $Y$ and the top right entry of $X$ generate the remaining entries. Thus, $\rx{x,y}^{\Z_2}$ is generated by $z_1=x+y,z_2=xy+yx,z_3=x^{-1}+y^{-1}$ (note that $z_3$ is the inverse of $x(x+y)^{-1}y$), and these are free by Proposition \ref{p:freetest} (more precisely, the sentence after it) because
\begin{align*}
&\left(\begin{pmatrix}0& \fz_3^{-1}\fz_1 \\1&\fz_1+\fz_1^{-1}\fz_2\end{pmatrix}+
\begin{pmatrix} \fz_1& \fz_2+\fz_3^{-1}\fz_1 
\\1&\fz_1^{-1}\fz_2
\end{pmatrix}\right)\begin{pmatrix}1\\0\end{pmatrix}=\begin{pmatrix}\fz_1\\0\end{pmatrix}, \\
&\left(\begin{pmatrix}0& \fz_3^{-1}\fz_1 \\1&\fz_1+\fz_1^{-1}\fz_2\end{pmatrix}\begin{pmatrix} \fz_1& \fz_2+\fz_3^{-1}\fz_1 \\1&\fz_1^{-1}\fz_2\end{pmatrix}+
\begin{pmatrix} \fz_1& \fz_2+\fz_3^{-1}\fz_1 \\1&\fz_1^{-1}\fz_2\end{pmatrix}\begin{pmatrix}0& \fz_3^{-1}\fz_1 \\1&\fz_1+\fz_1^{-1}\fz_2\end{pmatrix}\right)
\begin{pmatrix}1\\0\end{pmatrix}=\begin{pmatrix}\fz_2\\0\end{pmatrix}, \\
&\left(\begin{pmatrix}0& \fz_3^{-1}\fz_1 \\1&\fz_1+\fz_1^{-1}\fz_2\end{pmatrix}^{-1}+
\begin{pmatrix} \fz_1& \fz_2+\fz_3^{-1}\fz_1 
\\1&\fz_1^{-1}\fz_2
\end{pmatrix}^{-1}\right)\begin{pmatrix}1\\0\end{pmatrix}=\begin{pmatrix}\fz_3\\0\end{pmatrix}.
\end{align*}
\end{example}

\subsection{Linear group actions on free skew fields}\label{ss:free}

In this section we show that the generators $z_1,\dots,z_t$ obtained in the proof of Theorem \ref{theo:LinFinitelyGenerated} are free when $G$ is a finite group of linear automorphisms on $M=\rx{x_1,\dots,x_m}$. This is achieved using Proposition \ref{p:freetest}; the following construction will help us show that the relations in Proposition \ref{p:freetest} are trivial (i.e., the zero functions in $\fz_1,\dots,\fz_t$). 

Given a vector space $U$ over $\kk$, let $\px{U}$ be the tensor algebra over $U$ (which is isomorphic to the free algebra of rank $\dim U$, justifying the notation). 
Let $V=\operatorname{span}_\kk\{x_1,\dots,x_m\}$. Suppose $G$ is a finite group of automorphisms on $M$ with $\kar\kk\nmid |G|$, and $V$ is $G$-stable. Let $W\subset\px{V}$ be as in Proposition \ref{prop:Ws_lin}. 
We endow $V\otimes W$ and $V\otimes W\otimes W^\star$ with the diagonal action of $G$. 
Choose a $G$-stable complement $(W\cap VW)^\perp$ of $W\cap VW$ in $VW\equiv V\otimes W$ (this identification is merited because $V$ and $W$ are subspaces of a free algebra, and $V$ is homogeneous).
Denote the vector space $\fZ=\left((W\cap VW)^\perp\otimes W^\star\right)^G$, and consider the surjective map
$$\pi'':V\otimes W\otimes W^\star 
= \big((W\cap VW)\otimes W^\star\big)\oplus \big((W\cap VW)^\perp\otimes W^\star\big)
\xrightarrow{\tr \oplus \cR} \kk\oplus \fZ,
$$
where $\tr:W\otimes W^\star\to\kk$ is the trace $\tr(w\otimes \varphi)=\varphi(w)$ and $\cR: (W\cap VW)^\perp\otimes W^\star\to \fZ$ is the Reynolds operator $\cR(u)=\frac{1}{|G|}\sum_{g\in G}u^g$. 
Then $\pi''$ is a $G$-invariant linear map.
We can view $\pi''$ as a $G$-equivariant map in $3$ different ways:
\begin{eqnarray*}
\pi:V & \to & W^\star\otimes W\otimes (\kk\oplus \fZ)\\
\pi':V\otimes W & \to & W\otimes (\kk\oplus\fZ)\\
\pi'':V\otimes W\otimes W^\star & \to & \kk\oplus \fZ.
\end{eqnarray*}
Now 
$$\pi:V\to W^\star\otimes W \otimes (\kk\oplus \fZ)\cong  \End(W) \otimes (\kk\oplus \fZ)
\subseteq \End(W)\otimes \px{\fZ}$$ extends to a $G$-equivariant algebra homomorphism
$$
\Pi:\px{V}\to \End(W)\otimes \px{\fZ}.
$$

\begin{lemma}\label{l:formal}
Assume the notation from the preceding paragraph.
Let $\{w_j\}_j$ be a basis of $W$, and let $\{w_j^\star\}_j$ be its dual basis of $W^\star$. Then, 
\begin{equation}\label{e:goal}
\Pi\big(vw\big)(1\otimes 1) = \sum_{k=1}^d \Pi(w_k) \big(1\otimes\pi''(v\otimes w\otimes w_k^\star)\big)
\end{equation}
for all $v\in V$ and $w\in W$ (where $\Pi(vw),\Pi(w_k)$ act on $W\otimes \px{\fZ}$).
\end{lemma}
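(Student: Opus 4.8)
The plan is to reduce the identity \eqref{e:goal} to the auxiliary claim that $\Pi(u)(1\otimes 1)=u\otimes 1$ for every $u\in W$, where on the left $u$ is regarded as an element of $\px{V}$ and on the right as an element of the summand $W\subseteq W\otimes\px{\fZ}$; call this claim $(\dagger)$. Before proving it, I would record the elementary bookkeeping relating the three avatars $\pi,\pi',\pi''$: since $\pi$ is the image of $\pi''$ under the currying isomorphisms, the chosen basis $\{w_j\}$ of $W$ and its dual basis $\{w_j^\star\}$ give $\pi'(v\otimes u)=\sum_k w_k\otimes\pi''(v\otimes u\otimes w_k^\star)$ for $v\in V$, $u\in W$, and, under $W^\star\otimes W\cong\End(W)$, the identity $\Pi(v)(u\otimes 1)=\pi(v)(u\otimes 1)=\pi'(v\otimes u)$. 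This is pure linear algebra.

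I would prove $(\dagger)$ by induction along the grading $W=\bigoplus_{i\ge 0}W_i$ from Proposition~\ref{prop:Ws_lin}, where $W_0=\kk$ and $W_i\subseteq VW_{i-1}$ for $i\ge 1$. The case $i=0$ is immediate because $\Pi$ is a unital $\kk$-algebra homomorphism. For the inductive step, write $w\in W_{i+1}$ as a sum of products $w=\sum_l v_l u_l$ in $\px{V}$ with $v_l\in V$ and $u_l\in W_i$; then the homomorphism property of $\Pi$, the inductive hypothesis for the $u_l$, and the identity above give $\Pi(w)(1\otimes 1)=\sum_l\Pi(v_l)(u_l\otimes 1)=\sum_l\pi'(v_l\otimes u_l)=\pi'\big(\textstyle\sum_l v_l\otimes u_l\big)$. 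Under the identification $V\otimes W\equiv VW\subseteq\px{V}$ the vector $\sum_l v_l\otimes u_l$ is exactly $w$, and $w$ lies in $W\cap VW$; on the summand $(W\cap VW)\otimes W^\star$, however, $\pi''$ is by construction the trace, so $\pi''(w\otimes w_k^\star)=w_k^\star(w)$ and hence $\pi'(w)=\sum_k w_k\otimes w_k^\star(w)=\big(\sum_k w_k^\star(w)\,w_k\big)\otimes 1=w\otimes 1$. This closes the induction.

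Granting $(\dagger)$, the lemma follows quickly. Regarding $vw$ as the product in $\px{V}$, the homomorphism property and $(\dagger)$ give $\Pi(vw)(1\otimes 1)=\Pi(v)\big(\Pi(w)(1\otimes 1)\big)=\Pi(v)(w\otimes 1)=\pi'(v\otimes w)=\sum_k w_k\otimes\pi''(v\otimes w\otimes w_k^\star)$, which is the left-hand side of \eqref{e:goal}. For the right-hand side, the left $\End(W)\otimes\px{\fZ}$-action on $W\otimes\px{\fZ}$ commutes with right multiplication on the $\px{\fZ}$-tensor factor, so $\Pi(w_k)\big(1\otimes\pi''(v\otimes w\otimes w_k^\star)\big)=\big(\Pi(w_k)(1\otimes 1)\big)\cdot\pi''(v\otimes w\otimes w_k^\star)=w_k\otimes\pi''(v\otimes w\otimes w_k^\star)$ by $(\dagger)$ applied to the basis vector $w_k$; summing over $k$ reproduces the left-hand side.

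The main obstacle is $(\dagger)$, and within it the delicate point is keeping the two meanings of $W\cap VW$ compatible: on the free-algebra side it is the honest intersection of the subspaces $W$ and $VW=V\cdot W$ inside $\px{V}$, while on the combinatorial side it is the summand of $V\otimes W\otimes W^\star$ on which $\pi''$ was declared to equal the trace. The induction succeeds precisely because each $W_i$ with $i\ge 1$ lies in $VW_{i-1}\subseteq W\cap VW$, i.e.\ in the locus where $\pi''$ is the trace rather than the Reynolds operator $\cR$; this is the one place where the greedy construction of $W$ in Propositions~\ref{prop:Ws} and~\ref{prop:Ws_lin} is genuinely used. I would also be careful that $\pi=\Pi|_V$ really is the currying of $\pi''$ and that $\Pi$ is the algebra homomorphism it generates, since the identity $\Pi(v)(u\otimes 1)=\pi'(v\otimes u)$ is the bridge between the two sides.
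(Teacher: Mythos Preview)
Your proposal is correct and follows essentially the same approach as the paper: first establish the auxiliary identity $\Pi(u)(1\otimes 1)=u\otimes 1$ for $u\in W$ by induction using $W\subseteq\kk+VW$ and the fact that $\pi''$ is the trace on $(W\cap VW)\otimes W^\star$, then deduce \eqref{e:goal} by unwinding $\pi'$. The only cosmetic differences are that you induct over the pieces $W_i$ directly (avoiding the scalar term $\alpha$ in the paper's decomposition $w=\alpha+\sum_j\ell_jw_j$) and you record the currying identity $\pi'(v\otimes u)=\sum_k w_k\otimes\pi''(v\otimes u\otimes w_k^\star)$ up front, whereas the paper recovers it by applying $w_k^\star\otimes\id$ to both sides; neither changes the substance.
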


\begin{proof}
First, we show that
\begin{equation}\label{e:piw}
\Pi(w)(1\otimes 1) =w\otimes 1 \quad \text{for }w\in W.
\end{equation}
We prove \eqref{e:piw} by induction on $\deg w$, using $W\subseteq \kk+VW$. Clearly, \eqref{e:piw} holds for $w\in \kk$. Now let $w=\alpha+\sum_j \ell_j w_j$ where $\ell_j\in V$ and $w_j\in W$ satisfy $\Pi(w_j)(1\otimes 1)=w_j\otimes 1$. Then
\begin{align*}
\Pi(w)(1\otimes 1)
&=\alpha\otimes1+\sum_j\Pi(\ell_j)\Pi(w_j)(1\otimes 1) 
=\alpha\otimes1+\sum_j\pi(\ell_j)(w_j\otimes 1) \\
&=\alpha\otimes1+\sum_j\pi'(\ell_j\otimes w_j) \in W\otimes (\kk\oplus \fZ),
\end{align*}
and hence
\begin{align*}
(w_k^\star\otimes \id)\big(\Pi(w)(1\otimes 1)\big)
&=\alpha w_k^\star(1)+\sum_j (w_k^\star\otimes \id)\big(\pi'(\ell_j\otimes w_j)\big) \\
&=\alpha w_k^\star(1)+\sum_j \pi''(\ell_j\otimes w_j\otimes w_k^\star) \\
&=\pi''\left(\left(\alpha\otimes 1+\sum_j\ell_j\otimes w_j\right)\otimes w_k^\star\right)
=w_k^\star(w)
\end{align*}
for all $k$, which implies $\Pi(w)(1\otimes 1)=w\otimes1$.
Now, consider both sides of \eqref{e:goal}. Using \eqref{e:piw}, they simplify to
\begin{align}
\label{e:lhs} &\Pi(vw)(1\otimes 1)
=\Pi(v)\Pi(w)(1\otimes 1)
=\pi(v)(w\otimes 1)=\pi'(v\otimes w)\in W\otimes (\kk \oplus \fZ), \\
\label{e:rhs} &\sum_k \Pi(w_k) \big(1\otimes\pi''(v\otimes w\otimes w_k^\star)\big)
= \sum_k w_k\otimes \pi''(v\otimes w\otimes w_k^\star) \in W\otimes (\kk\oplus \fZ).
\end{align}
Applying $w_{k'}^\star\otimes \id$ to \eqref{e:lhs} results in
$$(w_{k'}^\star\otimes \id)\big(\pi'(v\otimes w)\big)
=\pi''(v\otimes w\otimes w_{k'}^\star),$$
while applying it to \eqref{e:rhs} results in
$$\sum_k w_{k'}^\star(w_k) \pi''(v\otimes w\otimes w_k^\star)=\pi''(v\otimes w\otimes w_{k'}^\star).$$
Since this is true for all $k'$, it follows that \eqref{e:lhs} and \eqref{e:rhs} are equal, and so \eqref{e:goal} holds.
\end{proof}

The system \eqref{e:goal} of polynomial identities in $\px{\fZ}$ is the last ingredient for the proof of the next main result.

\begin{theorem}\label{t:linfree}
Let $G$ be a finite group of linear automorphisms on $\rx{x_1,\dots,x_m}$, and $\kar\kk\nmid |G|$. 
Then, $\rx{x_1,\dots,x_m}^G$ is the free skew field of rank $|G|(m-1)+1$.
\end{theorem}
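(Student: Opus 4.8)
The plan is to combine the counting result of Theorem~\ref{theo:LinFinitelyGenerated} with the freeness test of Proposition~\ref{p:freetest}, using Lemma~\ref{l:formal} to show that the fundamental relations $\cR$ are the zero functions. First I would fix a $G$-stable $m$-dimensional subspace $V=\operatorname{span}_\kk\{x_1,\dots,x_m\}$ and invoke Lemma~\ref{l:fixed}(3) to note that $G$ acts by outer automorphisms, so that Proposition~\ref{prop:Ws_lin} and Theorem~\ref{theo:LinFinitelyGenerated} apply with $L=M^G$ and $d=|G|$. This yields the $G$-stable subspace $W\subseteq\px{V}$ with $1\in W$, $\dim W=|G|$, $W\subseteq\kk+VW$, $WL=M$, $W^\star\cong\kk[G]$, and the space $Z\subseteq L$ with $\dim Z\le |G|(m-1)+1$ together with the matrices $X_i\in L^{d\times d}$ representing $\rho(x_i)$. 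The key point from the proof of Theorem~\ref{theo:LinFinitelyGenerated} is that $Z$ is (isomorphic to) $\fZ=((W\cap VW)^\perp\otimes W^\star)^G$, i.e. the very space appearing in the construction preceding Lemma~\ref{l:formal}; so the span $\kk 1+Z$ of the entries of the $X_i$ is exactly $\kk\oplus\fZ$, and a choice of basis $z_1,\dots,z_t$ of $Z$ (with $t\le |G|(m-1)+1$) gives the auxiliary free variables $\fz_1,\dots,\fz_t$ and the matrices $\fX_i$ over $\px{\fz_1,\dots,\fz_t}\cong\px{\fZ}$ of \eqref{e:fXi}.

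Next I would identify the formal homomorphism $\Pi:\px{V}\to\End(W)\otimes\px{\fZ}$ of Lemma~\ref{l:formal} with the map $x_i\mapsto \fX_i$. Indeed, by construction $\pi:V\to\End(W)\otimes(\kk\oplus\fZ)$ sends $x_i$ to the matrix whose entries are the corresponding coordinates in $\kk\oplus\fZ=\kk 1+Z$, which is precisely $\fX_i$ under the identification $\px{\fZ}\cong\px{\fz_1,\dots,\fz_t}$; and $\rho(x_i)=X_i$ is obtained from $\fX_i$ by substituting $\fz_j\mapsto z_j$. The crux is then to show that each relation $r\in\cR$ evaluates to $0$ on $\fz_1,\dots,\fz_t$, equivalently that $z_j(\fX_1,\dots,\fX_m)e_1=\fz_j e_1$ in $\px{\fZ}^d$ for all $j$. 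Since $z_j\in L\subset M$ and $M$ is a free skew field, $z_j$ is given by a rational expression in $x_1,\dots,x_m$; applying $\Pi$ formally (which is legitimate because $\Pi$ is an algebra homomorphism and by Lemma~\ref{l:Winvert}-type reasoning the relevant matrices stay invertible — more directly, this is exactly the universal-property argument already run inside the proof of Proposition~\ref{p:freetest}) gives $z_j(\fX_1,\dots,\fX_m)=\Pi(z_j)$. I then compute $\Pi(z_j)(1\otimes 1)$: writing $z_j$ as an element of the free left $L$-module $M=WL$, it is the $z_j$-multiple of the basis vector $1=w_1$, and by repeated application of \eqref{e:goal} (i.e. by induction on a rational expression / the degree filtration, using $\Pi(w_k)(1\otimes 1)=w_k\otimes 1$ from \eqref{e:piw}) one gets $\Pi(z_j)(1\otimes 1)=1\otimes\fz_j$ — which says precisely that the first column of $z_j(\fX_1,\dots,\fX_m)$ is $\fz_j e_1$. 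Hence $\cR=\{0\}$.

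With $\cR=\{0\}$ established, Proposition~\ref{p:freetest} (the final sentence of its statement) gives immediately that $L=M^G$ is the free skew field of rank $t$, where $t=\dim Z$. It remains only to pin down $t=|G|(m-1)+1$: Theorem~\ref{theo:LinFinitelyGenerated} gives $t\le |G|(m-1)+1$, and the reverse inequality follows because a free skew field of rank $t$ is not generated by fewer than $t$ elements (by item (d) of the preliminaries, ranks are isomorphism invariants), while $M$ has transcendence-type considerations forcing $t\ge |G|(m-1)+1$ — concretely, one can argue via $\dim_\kk$ growth of the filtration $V_+^kL$, or simply cite that if $L$ were free of smaller rank then so would be some overfield, contradicting $\dim M_L=|G|$ together with the free-vs-free dimension formula; the cleanest route is to observe that $t<|G|(m-1)+1$ is impossible because the construction in Proposition~\ref{prop:Ws_lin}–Theorem~\ref{theo:LinFinitelyGenerated} already exhibits $Z$ of dimension exactly $md-(d-1)=|G|(m-1)+1$ when the map $\rho'$ has no kernel on $T^\perp\otimes W^\star$, and a dimension drop would force an unexpected linear relation among the entries of the $\fX_i$ contradicting freeness of $\rx{x_1,\dots,x_m}$ itself.

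\textbf{Main obstacle.} I expect the hard part to be the rigorous passage from the formal polynomial identities \eqref{e:goal} of Lemma~\ref{l:formal}, which are statements in $\End(W)\otimes\px{\fZ}$ about products $vw$ with $v\in V$, $w\in W$, to the rational identity $z_j(\fX_1,\dots,\fX_m)e_1=\fz_j e_1$ needed to conclude $r=0$ for $r\in\cR$. This requires showing that $\Pi$ — a priori only defined on the free \emph{algebra} $\px{V}$ — interacts correctly with inverses, i.e. that $\Pi(a^{-1})=\Pi(a)^{-1}$ whenever $a$ is invertible in $M$; equivalently, that the matrix image under $\Pi$ of any full matrix over $\px{x_1,\dots,x_m}$ is invertible over $\px{\fZ}$'s skew field of fractions. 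This is not automatic and is precisely the place where the specific structure of $W$ and the identity \eqref{e:piw} ($\Pi(w)(1\otimes 1)=w\otimes 1$) must be leveraged, together with the universal property (Lemma~\ref{l:univ}) exactly as in the proof of Proposition~\ref{p:freetest}; making this dovetail with the inductive identity \eqref{e:goal} so that the bookkeeping of first columns propagates through arbitrary nested inverses is the real content of the argument.
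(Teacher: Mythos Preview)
Your overall architecture matches the paper's, and you have correctly located the crux; but your proposed resolution of the ``main obstacle'' does not work, and the paper circumvents it by a move you are missing. The step ``$\Pi(z_j)(1\otimes 1)=1\otimes\fz_j$ by repeated application of \eqref{e:goal}'' is unjustified: \eqref{e:goal} only handles products $vw$ with $v\in V$, $w\in W$, and \eqref{e:piw} only applies to $w\in W$; neither says anything about $z_j\in L$, which is a genuinely rational (non-polynomial) element of $M$ and does not lie in $W$. Your appeal to the universal-property argument inside the proof of Proposition~\ref{p:freetest} is circular: that argument concludes that the first column of $\Phi(z_j)$ equals $a_je_1$ \emph{because the target skew field $D$ is assumed to satisfy the relations $\cR$}, which is exactly what you are trying to prove when $D=\rx{\fZ}$.

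The paper's remedy is to not attempt $z_j(\fX)e_1=\fz_je_1$ directly, but to pass to the equivalent reformulation \eqref{e:freeqsM} of the fundamental relations (established in Section~\ref{sec:free}.2). Those relations involve only $x_i^gw_j^g$ and $w_k^g$, all of which lie in $\px{V}$ because $W\subset\px{V}$ and $G$ acts linearly; so $\Pi$ applies to them without any extension to inverses. The identities \eqref{e:freeqs1}, which are \eqref{e:goal} combined with $G$-equivariance of $\Pi$, then say precisely that the relations \eqref{e:freeqs} (i.e.\ \eqref{e:freeqsM}) vanish identically in $\px{\fZ}$. No passage through nested inverses is ever required. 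On the count: the paper takes $\fz_1,\dots,\fz_t$ to be a basis of $\fZ$, so $t=\dim\fZ=\dim(W\cap VW)^\perp=|G|m-(|G|-1)$ from the start (since $1\in W\subset\px{V}$ forces $\dim(W\cap VW)=|G|-1$); the images $z_j=\rho''(\fz_j)$ then only \emph{span} $Z$, and Proposition~\ref{p:freetest} (which is stated for spanning sets) gives $L$ free of rank exactly $t$ once $\cR=\{0\}$. Your separate lower-bound maneuvers are unnecessary.
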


\begin{proof}
Let $M=\rx{x_1,\dots,x_m}$ and $V=\operatorname{span}_\kk\{x_1,\dots,x_m\}$. Note that $G$ consists of outer automorphisms. Let $W$ be as in Proposition \ref{prop:Ws_lin}, and $d=\dim W=\dim M_{M^G} = |G|$.

We mimic the proof of Theorem \ref{theo:LinFinitelyGenerated}; let $\rho:M\to \End(W)\otimes L$ and $\rho'':M\otimes W\otimes W^\star\to L$ be as therein. 
Since $W$ is a subspace of the free algebra $\px{V}$ and $W\subseteq \kk+VW$, it follows that the $G$-stable subspace $W\cap VW\subseteq VW$ has dimension $d-1$. 
Since $VW=V\otimes W$ in the tensor algebra $\px{V}$, we can view $W\cap VW$ as a subspace of $V\otimes W$ (that plays the role of $T$ from the proofs of Theorems \ref{theo:FinitelyGenerated} and \ref{theo:LinFinitelyGenerated}).
Choose a $G$-stable complement $(W\cap VW)^\perp$ of $W\cap VW$ in $V\otimes W$, and denote $\fZ=((W\cap VW)^\perp\otimes W^\star)^G\subseteq V\otimes W\otimes W^\star$. 
Since $W^\star\cong \kk[G]$, we have $\dim \fZ=\dim\big((W\cap VW)^\perp\otimes W^\star\big)^G=\dim (W\cap VW)^\perp$.
The map $\rho'':V\otimes W\otimes W^\star \to L$ is $G$-invariant, and consequently $Z=\rho''((W\cap VW)^\perp\otimes W^\star)=\rho''(\fZ)$. Recall that $L$ is then generated by a basis of $Z$ by the proof of Theorem \ref{theo:FinitelyGenerated}. 

Let $\pi'':V\otimes W\otimes W^\star\to\kk\oplus \fZ$ 
and $\Pi:\px{V}\to\End(W)\otimes\px{\fZ}$ be as in Lemma~\ref{l:formal}. Fix a basis $1=w_1,\dots,w_d$ of $W$, and its dual basis $w_1^\star,\dots,w_d^\star$ in $W^\star$. Then,
\begin{equation}\label{e:freeqs1}
\Pi\big(x_i^gw_j^g\big)(1\otimes 1) - \sum_{k=1}^d \Pi(w_k^g) \big(1\otimes\pi''(v\otimes w_j\otimes w_k^\star)\big)=0
\quad \text{for all }i,j,g
\end{equation}
by \eqref{e:goal} because $\Pi$ is $G$-equivariant. 
By the construction of $\rho,\rho''$ and $\Pi,\pi''$, the diagrams 
\begin{equation}
\label{e:diagram}
\begin{tikzcd}
V\otimes W\otimes W^\star \arrow[r, "\pi''"] \arrow[rd, "\rho''"'] 
& \kk\oplus \fZ \arrow[d, "\rho''"] \\
& \kk \oplus Z
\end{tikzcd} \qquad \begin{tikzcd}
\px{V} \arrow[r, "\Pi"] \arrow[rd, "\rho"'] 
& \End(W)\otimes\px{\fZ} \arrow[d, ""] \\
& \End(W)\otimes L
\end{tikzcd}
\end{equation}
commute; here, the vertical map in the second diagram in \eqref{e:diagram} is the unique algebra homomorphism that restricts to the identity on $\End(W)$ and whose restriction to $\fZ$ agrees with $\rho''$. 
Let $\fz_1,\dots,\fz_t$ form a basis of $\fZ$; then $z_1=\rho''(\fz_1),\dots,z_t=\rho''(\fz_t)$ span $Z$, and our goal is to show that they are free in $M$.
Since $\fz_1,\dots,\fz_t$ span $\fz$, there are matrices $A_{i0},\dots,A_{it}$ over $\kk$ (for $i=1,\dots,m$) such that
\begin{equation}\label{e:newfXi}
\Big(\pi''(x_i\otimes w_j\otimes w_k^\star)\Big)_{j,k=1}^d = A_{i0}+\fz_1A_{i1}+\cdots+\fz_tA_{it}.
\end{equation}
Since the first diagram in \eqref{e:diagram} commutes, the matrices $X_i$ representing $\rho(x_i)$ relative to the basis $w_1,\dots,w_d$ satisfy
$$X_i = A_{i0}+z_1A_{i1}+\cdots+z_tA_{it}.$$
Thus, we view \eqref{e:newfXi} as affine matrices over $\px{\fz_1,\dots,\fz_t}=\px{\fZ}$ and denote them $\fX_i$ as in \eqref{e:fXi}. 
By Proposition \ref{p:freetest} and \eqref{e:freeqsM} (see also \eqref{e:freeqs0g}), the fundamental relations between $z_1,\dots,z_t$ are 
\begin{equation}\label{e:freeqs}
(x_i^g w_j^g)(\fX)e_1 - \sum_{k=1}^d (w_k^g)(\fX)e_1 \cdot (\fX_i)_{kj} 
\quad \text{for all }i,j,g.
\end{equation}
Expressions \eqref{e:freeqs} are vectors of noncommutative polynomials in $\fz_1,\dots,\fz_t$; we claim that they are zero (i.e., they are trivial relations between $z_1,\dots,z_t$). 
By the definition of $\fX_i$ we have
$(\fX_i)_{kj} = \pi''(v\otimes w_j\otimes w_k^\star)$. Since the diagrams \eqref{e:diagram} commute, $(w_k^g)(\fX)e_1$ and $(x_i^g w_j^g)(\fX)e_1$ are precisely the vectors representing $\Pi(w_k^g)(1\otimes 1)$ and $\Pi(x_i^gw_j^g)(1\otimes 1)$, respectively, relative to the basis $w_1,\dots,w_d$ of $W$. Thus, the expressions \eqref{e:freeqs} are vector representations of the left-hand-side expressions in \eqref{e:freeqs1}. 
The equations \eqref{e:freeqs1} then imply that \eqref{e:freeqs} are zero. 
Hence, the fundamental relations between $z_1,\dots,z_t$ are trivial, so $z_1,\dots,z_t$ are free in $M$ by Proposition \ref{p:freetest}. In particular, they are also linearly independent, so 
$$\dim Z= \dim S= \dim (W\cap VW)^\perp= md-(d-1).$$
Hence, $t=md-(d-1)$ and $z_1,\dots,z_t$ are free generators of $L$.
\end{proof}

Theorem \ref{t:linfree} bears resemblance to the renowned Nielsen–Schreier index formula for subgroups of free groups. While the proof of Theorem \ref{t:linfree} does not rely on free groups (as opposed to its earlier special case \cite[Theorem 5.1]{KPPV}), an expert may nevertheless recognize analogies in the proof. 
For another manifestation of the index formula, see the Schreier-Lewin formula for ranks of free modules over free algebras \cite[Theorem 4]{Lew0}.

It is not surprising that invariants of infinite linear groups may not be finitely generated. The techniques developed in this paper do not readily apply to this case (which is left to later studies). Nevertheless, let us show that the freeness conclusion still holds in the simplest nontrivial example.

\begin{example}\label{ex:inf}
Suppose $\kk$ is infinite, and let $G$ be an infinite multiplicative subgroup of $\kk\setminus\{0\}$. Consider the linear action of $G$ on $\rx{x,y}$ given by $x^\alpha=\alpha x$ and $y^\alpha=y$ for $\alpha\in G$. We claim that $\rx{x,y}^G$ is a free skew field on infinitely many generators. Let $L$ be the skew subfield of $\rx{x,y}$ generated by $\{x^{-n}yx^n\colon n\in\Z \}$, and consider the automorphism $\sigma:L\to L$ given by $\sigma(\ell)=x^{-1}\ell x$. By \cite[Lemma 5.5.6]{Coh1}, the generators $x^{-n}yx^n$ for $n\in\Z$ are free in $L$, and $\rx{x,y}=L(x;\sigma)$ (the classical ring of right fractions of the skew polynomial ring $L[x;\sigma]$). We claim that $L=\rx{x,y}^G$. Clearly, $L\subseteq \rx{x,y}^G$. Conversely, if $r\in \rx{x,y}^G\subseteq L(x;\sigma)$, we can write $r=pq^{-1}$ for $p,q\in L[x;\sigma]$ of minimal total degree in $x$. Since $L[x;\sigma]$ is a principal right ideal domain \cite[Proposition 2.1.1]{Coh1}, $p$ and $q$ are uniquely determined up to a right multiple from $L$. Since
$$p(x)q(x)^{-1}=r=r^\alpha = p(\alpha x)q(\alpha x)^{-1}$$
for infinitely many $\alpha \in G$, uniqueness implies $p,q\in L$, and therefore $r\in L$.
\end{example}

\section{Non-free invariant skew subfields of a free skew field}\label{sec:nonfree}

The aim of this section is threefold. First, it demonstrates that there exist non-free invariant skew subfields in the free skew field, resolving a L\"uroth-like question of Schofield \cite{Sch}. Second, the number of required generators does not depend just on the group, but on its action (cf. Corollary \ref{c:nonlin} and Theorem \ref{theo:LinFinitelyGenerated}). Third, we disprove the conjecture of Cohn \cite{Coh0} on centralizers in free skew fields.

\subsection{Free de Jonqui\`eres involutions}\label{sec:jonq}

To establish the above claims, we advance the calculations from Section \ref{ex:Z2} for a special kind of involution on $\rx{x,y}$. Let $f\in\kk(x)\setminus\{0\}$.
Then,
$$x^\phi = y^{-1}xy,\quad y^\phi = y^{-1}f$$
determines an involution $\phi$ of $\rx{x,y}$, since
$$(x^\phi)^\phi=(y^\phi)^{-1}x^\phi y^\phi=f^{-1}y\cdot y^{-1}xy\cdot y^{-1}f=x,\quad
(y^\phi)^\phi=(y^\phi)^{-1}f(x^\phi)=f^{-1}y\cdot y^{-1}fy=y.$$
This involution is a noncommutative analog of a de Jonqui\`eres involution of the plane \cite{BB}. 
If $\kk$ is algebraically closed, one can restrict to $f\in\kk[x]$ with simple roots after a change of variables $y\mapsto qy$ for a suitable $q\in\kk(x)$. 
If $f$ is a polynomial of degree at least three with pairwise distinct roots, the commutative specialization of $\phi$ is not conjugated to a linear automorphism (see~\cite{BB}); thus, this involution is a natural candidate for seeking non-free skew subfields of $\rx{x,y}$. We assume $\kar\kk\neq 2$ for the rest of this section.

As in Section \ref{ex:Z2} we take $W=\operatorname{span}_{\kk}\{1,x\}$, and write
$$\sD_1=\begin{pmatrix}x&0\\0&y^{-1}xy\end{pmatrix},\quad
\sD_2=\begin{pmatrix}y&0\\0&y^{-1}f\end{pmatrix},\quad
\sW=\begin{pmatrix}1& x\\ 1& y^{-1}xy\end{pmatrix}.$$
Denote
$$
X:=\begin{pmatrix}0&z_1 \\ 1& z_2\end{pmatrix}=\sW^{-1}\sD_1\sW,\quad
Y:=\begin{pmatrix}z_3&z_5 \\ z_4& z_6\end{pmatrix}=\sW^{-1}\sD_2\sW,\quad
E=I\otimes e_1=\begin{pmatrix}1&0\\0&0\\0&1\\0&0\end{pmatrix}.
$$
Then, $z_1,\dots,z_6$ generate $\rx{x,y}^{\Z_2}$, and the fundamental relations between these generators come from \eqref{e:freeqsM} by Proposition \ref{p:freetest}. 
One can check that all six generators are nonzero by inspecting the formula \eqref{e:z2general} (we point this out because the calculations below involve multiplying equations by the generators or their inverses). The fundamental relations between $z_1,\dots,z_6$ as in \eqref{e:freeqsM} specialize to
\begin{align*}
\begin{pmatrix}X&0\\ 0&Y^{-1}XY\end{pmatrix}
\begin{pmatrix}I&X\\ I&Y^{-1}XY\end{pmatrix}
E
&=\begin{pmatrix}I&X\\ I&Y^{-1}XY\end{pmatrix}EX,\\
\begin{pmatrix}Y&0\\ 0&Y^{-1}f(X)\end{pmatrix}
\begin{pmatrix}I&X\\ I&Y^{-1}XY\end{pmatrix}
E
&=\begin{pmatrix}I&X\\ I&Y^{-1}XY\end{pmatrix}EY,
\end{align*}
which simplify to
\begin{equation}\label{e:rel1}
\begin{split}
\begin{pmatrix}X&X^2\\ XY&X^2Y\end{pmatrix}E
&=\begin{pmatrix}I&X\\ Y&XY\end{pmatrix}EX,\\
\begin{pmatrix}Y&YX\\ f(X)&f(X)Y^{-1}XY\end{pmatrix}E
&=\begin{pmatrix}I&X\\ Y&XY\end{pmatrix}EY.
\end{split}
\end{equation}
A direct calculation shows that the first block rows of both equations in \eqref{e:rel1} trivially hold; that is, they do not store any information about the relations between $z_1,\dots,z_6$. Thus, \eqref{e:rel1} simplifies to
\begin{equation}\label{e:rel2}
XC=CX,\quad f(X)Y^{-1}C=CY,\quad\text{where}\ C:=\begin{pmatrix}Y&XY\end{pmatrix}E
=\begin{pmatrix}z_3 & z_1z_4\\ z_4& z_3+z_2z_4 \end{pmatrix}.
\end{equation}
Since $Y$ and $C$ have the same first column,
we can write 
$$Y^{-1}C=\begin{pmatrix}1&a\\0&b\end{pmatrix}.$$
 Because $x$ and $y$ do not commute, $\sD_1$ and $\sD_2$ do not commute, and therefore $X$ and $Y$ do not commute.
We have $Y^{-1}C\neq I$, because otherwise the first equation in \eqref{e:rel2}
would imply that $X$ commutes with $Y=C$. 
Since $C$ commutes with $X$ and $(Y^{-1}C)^2=f(X)^{-1}C^2$ by \eqref{e:rel2}, it follows that $(Y^{-1}C)^2X=X(Y^{-1}C)^2$. 
The first columns of this matrix equation imply $a+ab=0$ and $b^2=1$. Hence, $(Y^{-1}C)^2=I$, and so $C^2=f(X)$ by \eqref{e:rel2}. 
Furthermore, we have $b=1$ and $a=0$ (because $\kar\kk\neq2$) or $b=-1$; the first option contradicts $Y^{-1}C\neq I$, and so $b=-1$. 

Let $f(X)=\left(\begin{smallmatrix} f_{11} & f_{12}\\ f_{21}& f_{22}\end{smallmatrix}\right)$. 
The first columns in $XC$ and $CX$ are trivially equal, and do not contribute any information. The equations $b=-1$ (i.e., the last piece of information from $(Y^{-1}C)^2=I$), $XC=CX$ and $C^2=f(X)$ are then equivalent to
\begin{align}
\label{e:rel_corner} \big(z_6-z_4z_3^{-1}z_5\big)^{-1}(z_3+z_2z_4-z_4z_3^{-1}z_1z_4)&=-1,\\
\label{e:rel_comm21} z_1z_3+z_1z_2z_4 &= z_3z_1+z_1z_4z_2,\\
\label{e:rel_comm22} z_1z_4+z_2z_3+z_2^2z_4&=z_4z_1+z_3z_2+z_2z_4z_2,\\
\label{e:rel_Csq11} z_3^2+z_1z_4^2&=f_{11},\\
\label{e:rel_Csq21} z_4z_3+z_3z_4+z_2z_4^2&=f_{21},\\
\label{e:rel_Csq12} z_3z_1z_4+z_1z_4z_3+z_1z_4z_2z_4&=f_{12},\\
\label{e:rel_Csq22} z_4z_1z_4+(z_3+z_2z_4)^2&=f_{22}.
\end{align}
Since $f(X)$ commutes with $X$, we have $f_{12}=z_1f_{21}$ and $f_{22}-f_{11}=z_2f_{21}$. 
Then, \eqref{e:rel_Csq12} is a consequence of $f_{12}=z_1f_{21}$, \eqref{e:rel_Csq21} and \eqref{e:rel_comm21} (right-multiplied by $z_4$); 
\eqref{e:rel_Csq22} is a consequence of $f_{22}-f_{11}=z_2f_{21}$, \eqref{e:rel_Csq11}, \eqref{e:rel_Csq21} and \eqref{e:rel_comm22} (right-multiplied by $z_4$).  
Observe that $z_6$ is a noncommutative rational function in $z_1,\dots,z_5$ by \eqref{e:rel_corner}, and that $z_5$ does not appear in any other relation. Thus, $\rx{x,y}^{\Z_2}$ is generated by $z_1,\dots,z_5$ where $z_1,\dots,z_4$ satisfy relations \eqref{e:rel_comm21}, \eqref{e:rel_comm22}, \eqref{e:rel_Csq11} and \eqref{e:rel_Csq21}.

From here on, we restrict to a cubic polynomial $f=x^3+\alpha_2 x^2+\alpha_1 x+\alpha_0$ with $\alpha_i\in\kk$. 
In this case, $f_{11}=z_1z_2+\alpha_2 z_1+\alpha_0$ and $f_{21}=z_1+z_2^2+\alpha_2z_2+\alpha_1$. 
We can express $z_1=z_4z_3+z_3z_4+z_2z_4^2-z_2^2-\alpha_2 z_2-\alpha_1$ from \eqref{e:rel_Csq21} -- this is the main reason for restricting to a cubic $f$. 
After inserting $z_1$ in \eqref{e:rel_comm22} and rearranging, we obtain
$$
\big(z_3+z_4(z_4^2-z_2-\alpha_2)\big)\cdot (z_4^2-z_2) 
= (z_4^2-z_2)\cdot \big(z_3+z_4(z_4^2-z_2-\alpha_2)\big).
$$
Denote $r=z_4^2-z_2-\alpha_2$ and $s=z_3+z_4(z_4^2-z_2-\alpha_2)$. 
Then, $r$ and $s$ commute, and $r,s,z_4$ generate the same subfield of $\rx{x,y}$ as $z_2,z_3,z_4$:
$$z_2=z_4^2-\alpha_2-r,\quad z_3=s-z_4r,\quad z_1=z_4s+sz_4-z_4rz_4-(r^2+\alpha_2r +\alpha_1).$$
Next, we express the remaining two equations \eqref{e:rel_Csq11} and \eqref{e:rel_comm21} in terms of $r,s,z_4$.
After a tedious yet straightforward calculation (using $rs=sr$), we simplify them to
\begin{align}
\label{e:last1} s^2&=r^3+\alpha_2 r^2+\alpha_1r+\alpha_0,\\
\label{e:last2} z_4(s^2-r^3-\alpha_2 r^2-\alpha_1r)&=(s^2-r^3-\alpha_2 r^2-\alpha_1r)z_4.
\end{align}
Note that \eqref{e:last2} is a direct consequence of \eqref{e:last1}. 

Let us summarize our findings: $z_6,z_1$ are noncommutative rational functions in $z_2,\dots,z_5$, and $z_2,z_3$ can be further replaced by $r,s$. 
Thus, $L$ is generated by $z_4,z_5,r,s$ and in terms of these generators, 
the fundamental relations reduce to $rs=sr$ and $s^2=f(r)$. 
Using the formulae \eqref{e:z2general}, it is easy to verify that $r$ and $s$ are nonconstant (and thus transcendental) elements of $\rx{x,y}$ (for their simplified explicit forms, see Corollary \ref{c:centraliser} below). 
Hence, they generate the function field of the plane cubic curve $\cC=\{(u,v):v^2=f(u)\}$.

Given a $\kk$-algebra $A$, its universal derivation bimodule $\Omega_\kk(A)$ is the $A$-subbimodule in $A\otimes_{\kk} A^\op$ generated by $\{\d a\colon a\in A\}$, where $\d a =a\otimes 1-1\otimes a$. If $A$ is a skew field generated by $a_1,\dots,a_m$, then $\Omega_\kk(A)$ is generated by $\d a_1,\dots,\d a_m$ since $\d:A\to\Omega_\kk(A)$ is a derivation. 
For the free skew field $M=\kk\lfree x_1,x_2,\dots,x_m\rfree$, the bimodule
$\Omega_\kk(M)$ is freely generated by $\d x_1,\d x_2,\dots,\d x_m$ \cite[Proposition 5.8.7 and Theorem 5.8.10]{Coh1}. The  universal derivation bimodule can be used to test freeness.

Assume that $f$ has simple roots and $\kar \kk\neq 2$, so that this curve $\cC=\{(u,v): v^2=f(u)\}$ is smooth (in other words, its completion is an elliptic curve).
Then the function field $F$ of $\cC$ is not rational over $\kk$ \cite[Example II.8.20.3]{Har} (which implies that it is not a free skew field). We will need the following lemma later (even though we already know that $F$ is not a free skew field):
\begin{lemma}\label{l:notcyclic}
   The $F$-bimodule $\Omega_{\kk}(F)$ is not cyclic.
\end{lemma}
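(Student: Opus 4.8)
The plan is to compute $\Omega_\kk(F)$ explicitly using the presentation $F = \kk(r,s)$ with the single relation $s^2 = f(r)$, and show that as an $F$-bimodule (equivalently, since $F$ is commutative, as an $F$-module, so ``cyclic'' means generated by one element) it has rank $\geq 2$ at the generic point, which already rules out cyclicity. First I would recall that $\Omega_\kk(F)$ is generated by $\d r$ and $\d s$, and that differentiating the relation $s^2 = f(r)$ yields $\d(s^2) = \d(f(r))$; using the Leibniz rule for $\d$ in the bimodule $\Omega_\kk(F) \subseteq F \otimes_\kk F^{\op}$, this reads $s\,\d s + (\d s)\,s = f'(r)\,\d r$ (with $f'$ the formal derivative of $f$). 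Since $F$ is commutative we may write this as $2s\,\d s = f'(r)\,\d r$, i.e.\ $\d s = (2s)^{-1}f'(r)\,\d r$ when $s \neq 0$; this suggests $\Omega_\kk(F)$ might be cyclic, generated by $\d r$. The subtlety — and the reason the lemma is true — is that $\d r$ is annihilated on the right and left by nontrivial elements, so it does not generate a free rank-one bimodule, and one must check that no single element generates.

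The cleanest route is via the conormal/cotangent sequence for the commutative algebra $F$. Concretely, $\Omega_\kk(F)$ (the commutative Kähler differentials, which here coincide with the symmetrization of the noncommutative $\Omega$ since $F$ is a field of transcendence degree one) is the $F$-module presented by generators $\d r, \d s$ and the single relation $2s\,\d s - f'(r)\,\d r = 0$. Since $\cC$ is a smooth curve, $\Omega_\kk(F)$ is a one-dimensional $F$-vector space — so as a module over the \emph{field} $F$ it is of course cyclic, which means I must be careful about what ``cyclic $F$-bimodule'' means here. The relevant notion (as used in Cohn's freeness test, \cite[Section 5.8]{Coh1}) is cyclicity as a bimodule over $F$, i.e.\ as a module over $F \otimes_\kk F^{\op}$. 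So the real claim is: $\Omega_\kk(F)$ is not generated by a single element as an $F$-bimodule. I would prove this by a dimension count: if it were cyclic, generated by some $\omega$, then $\Omega_\kk(F) = (F\otimes_\kk F^{\op})\omega$, and tensoring down along the multiplication map $F \otimes_\kk F^{\op} \to F$ (i.e.\ computing $F \otimes_{F\otimes F^{\op}} \Omega_\kk(F)$) would give a cyclic $F$-module; but this tensor-down is exactly $\Omega_\kk(F)/[F,\Omega_\kk(F)]$ — no wait, for commutative $F$ the two-sided and one-sided structures agree, so I need a genuinely bimodule-theoretic invariant.

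The key invariant is the following: for a finitely generated field extension $F/\kk$ of transcendence degree $n$, the $F$-bimodule $\Omega_\kk(F)$ is cyclic (as an $F\otimes_\kk F^{\op}$-module) if and only if $F$ is a rational (purely transcendental) extension — this is essentially \cite[Corollary 5.8.14 or Proposition 5.8.7]{Coh1} applied in the commutative case, and it is precisely the mechanism by which non-free skew subfields are detected. Thus the main step is to invoke that $F$, being the function field of a smooth plane cubic, is not rational over $\kk$ (by \cite[Example II.8.20.3]{Har}, as already cited), and conclude that $\Omega_\kk(F)$ is not a cyclic bimodule. The main obstacle I anticipate is making precise the bimodule-cyclicity criterion and ruling out the degenerate reading in which commutativity trivializes the two-sided structure: one must genuinely work with $F \otimes_\kk F^{\op} = F \otimes_\kk F$ (the coordinate ring of $\cC \times \cC$ localized at the generic point) and show $\Omega_\kk(F)$, which corresponds to the ideal sheaf of the diagonal modulo its square, is not cyclic there — equivalently that the diagonal in $\cC \times \cC$ is not locally a complete intersection of codimension... \emph{one}, which fails exactly when $\cC$ is not rational. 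I would phrase the final argument as: a cyclic presentation of $\Omega_\kk(F)$ over $F\otimes_\kk F$ would exhibit $F$ as $\kk(t)$ for a single transcendental $t$ (one takes $t$ so that $\d t$ generates), contradicting non-rationality of the elliptic function field.
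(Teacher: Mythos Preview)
Your proposal identifies the correct reformulation --- that $\Omega_\kk(F)$ is the kernel of the multiplication $F\otimes_\kk F\to F$, i.e.\ the ideal of the diagonal, and that cyclicity means this ideal is principal in $F\otimes_\kk F$ --- but the argument has a genuine gap at the decisive step. You assert that ``$\Omega_\kk(F)$ cyclic $\Leftrightarrow$ $F$ rational'' follows from \cite[Proposition 5.8.7 or Corollary 5.8.14]{Coh1}, but those results concern \emph{freeness} of the bimodule for free skew fields and the determination of free skew fields by rank; neither gives the commutative equivalence you need. Your attempted justification (``one takes $t$ so that $\d t$ generates'') is not valid: a principal generator of an ideal in $F\otimes_\kk F$ need not be of the form $\d t=t\otimes 1-1\otimes t$, and even if some $\d t$ did generate, you would still need an argument that this forces $F=\kk(t)$. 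There is also a recurring conflation of $\Omega_\kk(F)$ (the diagonal ideal $I$) with the commutative K\"ahler module $\Omega^1_{F/\kk}\cong I/I^2$; in particular, the relation you write as $2s\,\d s=f'(r)\,\d r$ holds only in $I/I^2$, not in $I$, since left and right $F$-actions on $I$ differ. Finally, the diagonal in $\cC\times\cC$ \emph{is} locally a codimension-one complete intersection (it is a Cartier divisor on a smooth surface), so that cannot be the obstruction; the real question is whether the diagonal is principal after the relevant localization.

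The paper's proof supplies exactly this missing step, by a concrete divisor argument. Viewing $F\otimes_\kk R$ (with $R=\kk[u,v]/(v^2-f(u))$) as the coordinate ring of the base change $\cC_F$, the ideal $\Omega_\kk(F)$ is the localization of the maximal ideal of the $F$-rational point $P=(r,s)$. If it were principal, a generator $c$ would have $\divisor(c)=(P)+(Q_1)+\cdots+(Q_m)-(m{+}1)(\infty)$ with the $Q_i$ $\kk$-rational; using the group law on the elliptic curve one replaces $(Q_1)+\cdots+(Q_m)-m(\infty)$ by a single $\kk$-rational point $Q$ up to a principal divisor, obtaining a rational function with divisor $(P)-(Q)$. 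Since $P$ is not $\kk$-rational, $P\neq Q$, and two distinct points on a curve of positive genus are never linearly equivalent --- contradiction. This is the substantive geometry that your sketch gestured at but did not carry out.
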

\begin{proof}
 The projective closure $\overline\cC$ of $\cC$ with the point at infinity $\infty$ is an elliptic curve. Let $R=\kk[u,v]/(v^2-f(u))$ be the coordinate ring of $\cC$ (with $F$ as its field of fractions). Suppose $\Omega_\kk(F)$ is a cyclic $F$-bimodule, in other words, a principal ideal in $F\otimes_{\kk} F$. Then, it is generated by some $c\in F\otimes_\kk R$. Note that $\Omega_\kk(F)$ is the localization (at $1\otimes R\setminus\{0\}$) of the maximal ideal $(u-r,v-s)$ in $F\otimes_\kk R$ corresponding to the $F$-rational point $P=(r,s)$ on $\cC$. The principal divisor of $c$ is then of the form $\divisor(c)=(P)+(Q_1)+\cdots+(Q_m)-(m+1) (\infty)$, where $Q_1,\dots,Q_m$ are $\kk$-rational points on $\cC$. 
Because $Q\mapsto (Q)-(\infty)$ induces a one-to-one correspondence between the points on $\overline\cC$ and the equivalence classes of degree-zero divisors on $\overline\cC$ \cite[Example IV.1.3.7]{Har}, there is a $\kk$-rational point $Q$ on $\overline\cC$ and $d\in F\setminus\{0\}$ (viewed as the function field of $\cC$) such that $m(\infty)-(Q_1)-\cdots-(Q_m)=(Q)-(\infty)+\divisor(d)$. Then, $\divisor(cd)=(P)-(Q)$; note that $P\neq Q$ since $Q$ is $\kk$-rational and $P$ is not. 
But distinct points on a non-rational smooth complete curve are not linearly equivalent \cite[Example II.6.10.1]{Har}, so $(P)-(Q)$ cannot be a principal divisor, a contradiction. Thus, $\Omega_\kk(F)$ is not a cyclic $F$-bimodule.
\end{proof}

The following Theorem gives us an example of an invariant subfield of a free skew field that is {\em not} free.

\begin{theorem}\label{t:nonfree}
Assume $\kar\kk\neq2$ and let $f\in\kk[x]$ be a monic cubic polynomial with simple roots. 
If $\Z_2$ acts on $\rx{x,y}$ via the involution
$$x\mapsto y^{-1}xy,\qquad y\mapsto y^{-1}f,$$
then the skew field $\rx{x,y}^{\Z_2}$ is generated by four (but not three) elements, and is not free. 
\end{theorem}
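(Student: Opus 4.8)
The plan is to build on the reduction carried out in the preceding subsection. By that analysis together with Proposition~\ref{p:freetest}, the skew field $L:=\rx{x,y}^{\Z_2}$ is generated by the four elements $z_4,z_5,r,s$, and the full list $\cR$ of fundamental relations is equivalent to the two relations $rs=sr$ and $s^2=f(r)$; since $f$ has simple roots and $\kar\kk\neq2$, the commutative subfield $F:=\kk(r,s)$ is exactly the function field of the smooth affine cubic $\cC=\{(u,v):v^2=f(u)\}$. First I would record that $L$ is the coproduct $F\ast_\kk N$ of skew fields, where $N:=\rx{z_4,z_5}$: this coproduct is a skew field possessing a universal field of fractions, it is generated by the images of $r,s$ (in the copy of $F$) and of $z_4,z_5$ (in the copy of $N$) subject to only the relations $rs=sr$ and $s^2=f(r)$, and it maps onto $L$ by its universal property, so Proposition~\ref{p:freetest} identifies it with $L$. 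This already proves the ``four elements'' half of the statement. That $L$ is not free will rely not merely on the non-rationality of $F$ but on Lemma~\ref{l:notcyclic}.

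Next I would pass to universal derivation bimodules. A $\kk$-derivation out of a skew-field coproduct is the same datum as a pair of derivations out of the two factors, so there is a canonical decomposition of $L$-bimodules
\[
\Omega_\kk(L)\;\cong\;\bigl(L\otimes_F\Omega_\kk(F)\otimes_F L\bigr)\,\oplus\,\bigl(L\otimes_N\Omega_\kk(N)\otimes_N L\bigr),
\]
and since $N$ is a free skew field of rank~$2$, $\Omega_\kk(N)$ is free of rank~$2$ and the second summand is a free $L$-bimodule of rank~$2$ (spanned by $\d z_4,\d z_5$). Write $R=L\otimes_\kk L^{\op}$, $S=F\otimes_\kk F$ and $P=L\otimes_F\Omega_\kk(F)\otimes_F L\cong R\otimes_S\Omega_\kk(F)$. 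Because $L$ is a free module over $F$ on both sides (as is any skew field over a sub-skew-field), $R$ is a free $S$-module, in particular faithfully flat over $S$. Moreover $S$ is a localization of the coordinate ring of $\cC$ base-changed to $F$, hence the coordinate ring of a smooth affine curve over $F$; as such it is a Dedekind domain (the curve is geometrically irreducible since $f$ is squarefree and $\kar\kk\neq2$), and $\Omega_\kk(F)=\ker(S\twoheadrightarrow F)$ is an invertible ideal of $S$ which by Lemma~\ref{l:notcyclic} is not principal.

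Now suppose $L$ were free. Then $\Omega_\kk(L)$ is a free $R$-module, so the displayed decomposition exhibits $P$ as a stably free $R$-module. Likewise, if $L$ were generated by three elements, $\Omega_\kk(L)$ would be generated by three elements over $R$, and splitting off the free rank-$2$ summand again forces $P$ (or the analogous module built from $\Omega_\kk(F)^{-1}$) to be stably free over $R$. In either case one would conclude that $\Omega_\kk(F)$ is a stably free, hence---over the Dedekind domain $S$---free, hence principal ideal of $S$, contradicting Lemma~\ref{l:notcyclic}. Therefore $L$ is neither free nor three-generated, which is the assertion of the theorem.

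The step I expect to be the real obstacle is this last descent: deducing (stable) freeness of $\Omega_\kk(F)$ over $S$ from that of $P=R\otimes_S\Omega_\kk(F)$ over $R$. Faithful flatness of $R$ over $S$ does not suffice on its own, since ideal classes can capitulate under faithfully flat extensions, so one must exploit the concrete structure of the enveloping algebra $R=L\otimes_\kk L^{\op}$. For instance, $\End_R(R\otimes_S\Omega_\kk(F))=R\otimes_S\End_S(\Omega_\kk(F))=R$, so if $P$ were free of rank $k$ then $R\cong M_k(R)$, forcing $k=1$ by weak finiteness of $R$; one is then left to descend the isomorphism $R\otimes_S\Omega_\kk(F)\cong R\cong R\otimes_S S$ to $\Omega_\kk(F)\cong S$ by hand. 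Making this descent rigorous is the heart of the matter; by comparison, the coproduct identification and the bimodule decomposition above are routine.
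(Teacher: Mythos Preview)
Your overall route is the paper's: identify $L$ with the field coproduct $F\ast_\kk N$ (the paper writes $K$ for your $N$) via Proposition~\ref{p:freetest}, split $\Omega_\kk(L)$ into a free rank-$2$ summand and $P=L\otimes_F\Omega_\kk(F)\otimes_F L$, and use Lemma~\ref{l:notcyclic}. The gap you flag is genuine and sits exactly where you place it: from non-cyclicity of $\Omega_\kk(F)$ over $S=F\otimes_\kk F$ you must conclude that $\Omega_\kk(L)\cong R^2\oplus P$ is neither free nor $3$-generated over $R=L\otimes_\kk L^{\rm op}$, and faithful flatness of $R$ over $S$ is indeed not enough to rule out capitulation. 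Your partial step via $\End_R(P)\cong R$ and weak finiteness is sensible but does not close the gap---even granting $P\cong R$ you still owe the descent to $\Omega_\kk(F)\cong S$; and in the three-generator branch, a surjection $R^3\twoheadrightarrow R^2\oplus P$ (which splits, since $\Omega_\kk(L)$ is a summand of $R$ and hence projective) only yields $(R^2\oplus P)\oplus K\cong R^3$, so even getting $P$ stably free already requires a cancellation property for $R$ that you have not verified.

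The paper sidesteps descent entirely. It invokes \cite[Theorems~5.8.8 and~5.8.10]{Coh1} to assert that for the field coproduct $D=K\ast_\kk F$ the relations among $\d z_4,\d z_5,\d r,\d s$ in $\Omega_\kk(D)$ are \emph{precisely} those induced from $\Omega_\kk(K)$ and $\Omega_\kk(F)$, and from that presentation concludes in one line that $\Omega_\kk(D)$ is not free and not $3$-generated. So the missing ingredient in your write-up is not a bespoke descent argument but Cohn's structural theorem on the universal derivation bimodule of a field coproduct; once you quote it, your proof finishes exactly as the paper's does.
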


\begin{proof}
Let $K=\rx{z_4,z_5}$, and let $F$ be field generated by $r,s$ satisfying $s^2=f(r)$.  
Let $D$ be the skew field coproduct of $K$ and $F$ over $\kk$; that is, $D$ is the universal skew field of fractions of the ring coproduct of $K$ and $F$ over $\kk$ (since the ring coproduct of $K$ and $F$ is a free ideal ring \cite[Theorem 5.3.9]{Coh1}).
Thus, $D$ is generated by $z_4,z_5,r,s$. The preceding calculations in this section show that $L$ is also generated by $z_4,z_5,r,s$, and all the relations between these generators in $D$ also hold in $L$ (and a priori, they may satisfy further relations in $L$). Thus, there is a local homomorphism from $D$ to $L$. However, all the preceding calculations are reversible (i.e., no information was lost in manipulating the equations), so $D$ satisfies the fundamental relations of $L$ (when we choose the generators of $D$ in the same way as $r,s$ express with $z_1,z_2,z_3,z_4$ in $L$). Thus, $D\cong L$ by Proposition \ref{p:freetest}.

The $K$-bimodule $\Omega_\kk(K)$ is free of rank 2 with generators $\d z_4,\d z_5$ by \cite[Proposition 5.8.7 and Theorem 5.8.10]{Coh1}. 
The $F$-bimodule $\Omega_\kk(F)$ is generated by $\d r,\d s$ but it is not free of rank 1 
by Lemma~\ref{l:notcyclic}. 
By \cite[Theorems 5.8.8 and 5.8.10]{Coh1}, the $D$-bimodule $\Omega_\kk(D)$ is generated by $\d z_4,\d z_5,\d r,\d s$ and the relations between these are precisely those from $\Omega_\kk(K)$ and $\Omega_\kk(F)$. 
Thus, $\Omega_\kk(D)$ is not a free $D$-bimodule and cannot be generated by three elements. Thus, the skew field $D$ cannot be generated by three elements, and is not free by \cite[Theorem 5.8.10]{Coh1}.
\end{proof}

\subsection{Centralizers in free skew fields}\label{sec:cent}

The invariant skew field in Section \ref{sec:jonq} is also significant for centralizers in free skew fields.
The centralizer of a non-scalar element in a free skew field is commutative \cite[Corollaire 2]{Coh0}, and furthermore finitely generated of transcendence degree 1 over $\kk$ \cite[Theorem 11.6]{Sch}. That is, every centralizer of a non-scalar element is isomorphic to the function field of a curve. 
It was conjectured in \cite[Conjecture]{Coh0} that the centralizer is rational, i.e., isomorphic to $\kk(t)$. 
However, this is not the case, as demonstrated by the $r,s$ constructed above that generate a non-rational field (and consequently their centralizer is not rational, since all subfields of $\kk(t)$ containing $\kk$ are rational by L\"uroth's theorem \cite[Example 2.5.5]{Har}).

\begin{corollary}\label{c:centraliser}
Assume $\kar\kk\neq2$ and let $f\in\kk[x]$ be a monic cubic polynomial with simple roots and the quadratic coefficient $\alpha$. 
Define $r,s\in\rx{x,y}\setminus\kk$ as
\begin{align*}
&r=b_1^2-b_2-\alpha\quad\text{and}\quad
s=y-xb_1+b_1r,\\
&\text{where}\qquad b_1=(xy-yx)^{-1}(f-y^2),\ b_2= (xy-yx)^{-1}(x^2y-yx^2).
\end{align*}
Then, $rs=sr$ and $s^2=f(r)$; the centralizer of $r$ (and $s$) in $\rx{x,y}$ is not rational.
\end{corollary}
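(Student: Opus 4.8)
The plan is to extract the statement of Corollary~\ref{c:centraliser} as a direct consequence of Theorem~\ref{t:nonfree} together with the explicit formulae~\eqref{e:z2general} specialized to the de Jonqui\`eres involution of Section~\ref{sec:jonq}, so the real content has already been established; what remains is bookkeeping. First I would recall that for the involution $x\mapsto y^{-1}xy$, $y\mapsto y^{-1}f$ we have $x^\phi=y^{-1}xy$ and $y^\phi=y^{-1}f$, so $x^\phi-x=y^{-1}(xy-yx)$ and hence $(x^\phi-x)^{-1}=(xy-yx)^{-1}y$. Plugging this into the formulae~\eqref{e:z2general} for $z_1,\dots,z_6$ and simplifying, one checks that $z_2=b_2-\text{(something scalar-free)}$ and $z_3,z_4$ likewise become the rational expressions in $x,y,f$ displayed in the statement; in particular $z_4=(x^\phi-x)^{-1}(y^\phi-y)=(xy-yx)^{-1}(f-y^2)=b_1$ and the combination $r=z_4^2-z_2-\alpha$ from Section~\ref{sec:jonq} becomes $b_1^2-b_2-\alpha$, while $s=z_3+z_4 r$ becomes $y-xb_1+b_1r$ after substituting the simplified $z_3$. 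This is the routine-calculation step: verifying that the abstract $r,s$ introduced in Section~\ref{sec:jonq} coincide with the explicit $r,s$ written in the corollary, which I would do by direct substitution into~\eqref{e:z2general} and the identities $rs=sr$, $s^2=f(r)$ derived as~\eqref{e:last1}.

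Next I would record that $r,s\notin\kk$. This follows because $r,s$ generate the function field $F$ of the plane cubic curve $\cC=\{(u,v):v^2=f(u)\}$ (as established in Section~\ref{sec:jonq}), and $F$ has transcendence degree $1$ over $\kk$, so it cannot be contained in $\kk$; alternatively one evaluates the explicit expressions at a generic pair of $2\times2$ or $3\times 3$ matrices and observes the result is non-scalar. Either way $r\in\rx{x,y}\setminus\kk$, as required for the statement, and $rs=sr$ places $s$ in the centralizer $C$ of $r$.

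Then comes the centralizer assertion. By~\cite[Corollaire 2]{Coh0} the centralizer $C$ of the non-scalar element $r$ in $\rx{x,y}$ is commutative, and by~\cite[Theorem 11.6]{Sch} it is a finitely generated field of transcendence degree $1$ over $\kk$, hence the function field of some curve; moreover $C\supseteq F$. Since $f$ has simple roots and $\kar\kk\neq 2$, the curve $\cC$ is smooth and its projective completion is an elliptic curve, so $F$ is \emph{not} rational over $\kk$ by~\cite[Example II.8.20.3]{Har}. If $C$ were rational, i.e.\ $C\cong\kk(t)$, then by L\"uroth's theorem~\cite[Example 2.5.5]{Har} every intermediate field $\kk\subseteq F\subseteq C$ would also be rational, contradicting non-rationality of $F$. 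Therefore $C$ is not rational. The same argument applies verbatim to the centralizer of $s$, since $s$ is likewise non-scalar and its centralizer also contains $F=\kk(r,s)$.

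The main obstacle is purely computational: matching the abstract $r,s$ from the chain of manipulations in Section~\ref{sec:jonq} (which passed through $z_1,\dots,z_6$, the substitution $z_1=z_4z_3+z_3z_4+z_2z_4^2-z_2^2-\alpha_2z_2-\alpha_1$ from~\eqref{e:rel_Csq21}, and then $r=z_4^2-z_2-\alpha_2$, $s=z_3+z_4(z_4^2-z_2-\alpha_2)$) with the closed forms in terms of $b_1,b_2$ stated in the corollary. Everything downstream — non-scalarity, the identities $rs=sr$ and $s^2=f(r)$, and the non-rationality of the centralizer — is immediate from results already proved in Sections~\ref{sec:jonq}--\ref{sec:cent} or cited from~\cite{Coh0,Sch,Har}. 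So the proof is short: establish the dictionary between the two descriptions of $r,s$, invoke smoothness of $\cC$ and non-rationality of its function field, and conclude with L\"uroth.
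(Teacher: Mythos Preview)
Your proposal is correct and follows essentially the same route as the paper: identify the abstract $r,s$ from Section~\ref{sec:jonq} with the explicit expressions in the corollary via the formulae~\eqref{e:z2general} (the paper itself refers forward to the corollary for the ``simplified explicit forms''), invoke that $F=\kk(r,s)$ is the function field of a smooth plane cubic and hence non-rational, and then use L\"uroth to conclude that no overfield isomorphic to $\kk(t)$ can contain $F$. One small slip: you write ``$z_2=b_2-\text{(something scalar-free)}$'', but in fact $z_2=(x^\phi-x)^{-1}\bigl((x^\phi)^2-x^2\bigr)=(xy-yx)^{-1}(x^2y-yx^2)=b_2$ exactly, and similarly $z_3=y-xb_1$ on the nose (via $x^\phi(x^\phi-x)^{-1}=1+x(x^\phi-x)^{-1}$); once these identifications are clean, $r=z_4^2-z_2-\alpha$ and $s=z_3+z_4r$ match the stated formulae without residual terms.
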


For a comparison, the centralizer of a non-scalar element in a free algebra is isomorphic to $\kk[t]$ by Bergman's centralizer theorem \cite[Theorem 5.3]{Ber}; moreover, its centralizer within the corresponding free skew field is isomorphic to $\kk(t)$ by \cite[Theorem 7.9.8]{Coh2}. 
One can push this analogy further: \cite[Corollary 6.7.2]{Coh2} shows that the centralizer of a non-scalar element in the ring of formal power series in free variables is isomorphic to $\kk[[t]]$; it is also not hard to see that the centralizer of a non-scalar element in the skew field of Malcev-Neumann series over a free group is isomorphic to the field of formal Laurent series $\kk(\!(t)\!)$.

Corollary \ref{c:centraliser} shows that all function fields of curves of genus 1 (at least when $\kk$ is algebraically closed and $\kar\kk\neq2$) embed into a free skew field; it is not known what happens for curves with higher genera. 
A remarkable result \cite[Theorem 11.10]{Sch} states that two elements of a free skew field are either free or commute; this fact hints at a possibly special role of commutativity among non-trivial relations in a free skew field. Thus, we conclude the section with the following questions.

\begin{question}
Let $M$ be a noncommutative free skew field.
\begin{enumerate}[(a)]
\item Does the function field of every curve embed into $M$?
\item Is every skew subfield of $M$ a skew field coproduct (over $\kk$) of function fields of curves?
\end{enumerate}
\end{question}

A better understanding of invariants for the involution $x\mapsto y^{-1}xy,y\mapsto y^{-1}f$ for a general $f\in\kk[x]$ would provide further insight into these questions. Yet at the moment, a conclusive procedure for reducing equations in free skew fields is lacking.

There is also a geometric explanation for why elements in the free skew field with centralizers of higher genera are rather atypical. Namely, a higher genus of the centralizer of $r\in\rx{x_1,\dots,x_m}$ implies a higher algebraic dependency of eigenvalues of $r$ (as a multivariate matrix function), as follows.

\newcommand{\uX}{X}

\begin{proposition}\label{p:genus}
Let $\kk$ be an algebraically closed field of characteristic~0. Suppose that $r,s\in\rx{x_1,\dots,x_m}$ commute and generate the function field of a curve of genus $g$. If $n\ge g$ is such that $r$ and $s$ are defined at some point in $(\kk^{n\times n})^m$, then the eigenvalues of $r$ on $(\kk^{n\times n})^m$  
generate a field of transcendence degree at most $n-g$ over $\kk$.
\end{proposition}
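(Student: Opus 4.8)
The plan is to encode the eigenvalues of $r$ in the symmetric power of the curve $\cC$ with function field $\kk(r,s)$, and then to play the rationality of $(\kk^{n\times n})^m$ against the fact that the Jacobian of $\cC$ is an abelian variety.

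\emph{Reduction to a symmetric power of the curve.} Let $\cC$ be the smooth projective curve with $\kk(\cC)=\kk(r,s)$, let $\cC_0\subseteq\cC$ be the open locus where $r$ and $s$ are regular, and let $P\in\kk[u,v]$ be the irreducible polynomial with $P(r,s)=0$. Let $U\subseteq(\kk^{n\times n})^m$ be the common domain of $r$ and $s$, nonempty by hypothesis. For $p\in U$ the matrices $r(p)$ and $s(p)$ commute and satisfy $P(r(p),s(p))=0$, since $P(r,s)=0$ already holds in $\rx{x_1,\dots,x_m}$. Decomposing $\kk^n$ into joint generalized eigenspaces of the pair yields an effective divisor $D_p$ of degree $n$ whose points, by simultaneous triangularization, satisfy $P=0$ and thus lie on $\cC_0$ (for generic $p$ the eigenvalues of $r(p)$ are distinct, so each joint eigenvalue pair pins down a unique point of $\cC_0$). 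The power sums $\tr\bigl(r(p)^a s(p)^b\bigr)$ are regular on $U$ and determine $D_p\in\operatorname{Sym}^n\cC_0$, so $p\mapsto D_p$ is a rational map $\Phi\colon U\to\operatorname{Sym}^n\cC$, regular on a dense open set. Since the eigenvalues of $r$ are the images of the points of $D_p$ under the morphism $r\colon\cC\to\mathbb{P}^1$ extending the function $r$, the field generated by those eigenvalues has transcendence degree at most $\dim\overline{\Phi(U)}$, and it suffices to bound the latter by $n-g$.

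\emph{Constancy of the Abel--Jacobi class.} Compose $\Phi$ with the Abel--Jacobi morphism $\operatorname{Sym}^n\cC\to\operatorname{Pic}^n\cC$; the target is a torsor under the abelian variety $\operatorname{Jac}(\cC)$ of dimension $g$, while $U$ is open in affine space and hence rational. As $\kar\kk=0$, every rational map from a rational variety to an abelian variety is constant (it extends to a morphism on a smooth projective birational model, whose Albanese variety vanishes). Therefore $\Phi$ lands in a single fibre of the Abel--Jacobi map, i.e.\ $\overline{\Phi(U)}$ is contained in a complete linear system $|D_0|$ with $\deg D_0=n$, a projective space of dimension $\ell(D_0)-1$. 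Riemann--Roch together with $n\ge g$ gives $\dim|D_0|=n-g+\ell(K-D_0)$.

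\emph{The main obstacle.} If $D_0$ is nonspecial then $\dim\overline{\Phi(U)}\le\dim|D_0|=n-g$ and we are done; this is automatic once $n\ge 2g-1$, and in particular in the elliptic case $g=1$ relevant to Corollary~\ref{c:centraliser}, where every divisor of positive degree is nonspecial. The genuinely delicate case is $g\le n\le 2g-2$, where one must rule out that $\overline{\Phi(U)}$ is dense in a special system $|D_0|$ of dimension exceeding $n-g$. Rationality of the source alone does not suffice for this (a rational variety can dominate any projective space); the extra input must be that each $D_p$ is the joint spectrum of a commuting pair and is supported on the \emph{affine} curve $\cC_0$, together with the impossibility of a nonconstant map from a rational variety — or from an appropriate finite cover of one — to $\cC$ when $g\ge1$, which should force $[D_0]$ to be nonspecial after all. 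I expect this step to require the bulk of the work in a complete proof.
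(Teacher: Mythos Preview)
Your approach is essentially the paper's: map $(\kk^{n\times n})^m$ rationally to $\operatorname{Sym}^n\cC$ via the joint spectrum of the commuting pair $(r(\uX),s(\uX))$, compose with the Abel--Jacobi map to $J(\cC)$, and use that every rational map from a rational variety to an abelian variety is constant. The paper packages the first step through the commuting variety and its GIT quotient $(\cD\times\cD)/\!\!/S_n\cong\cZ/\!\!/\GL_n$, but this is only a different bookkeeping of the same construction you describe with power sums $\tr(r^as^b)$.

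Where you diverge is in your ``main obstacle''. The paper does \emph{not} address it. After noting that $\phi\circ\sigma$ is constant, the paper simply writes ``Therefore, the image of $\sigma$ has dimension at most $n-g$'', passing silently from \emph{generic} fibre dimension of $\phi\colon\cC^{(n)}\to J(\cC)$ to a bound on the \emph{particular} fibre hit by $\sigma$. No argument is given to exclude the possibility that $\sigma$ lands in a special linear system $|D_0|$ with $\dim|D_0|>n-g$. So the published proof is complete precisely in the range you already isolate: $n\ge 2g-1$ (and hence in the elliptic case $g=1$ that motivates the proposition via Corollary~\ref{c:centraliser}). In the window $g\le n\le 2g-2$ your caution is justified, and your proposal is in fact more scrupulous than the paper on this point; neither your sketch nor the paper supplies the missing step.
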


\begin{proof}
    The commuting variety $\cZ$
    is the set of all pairs $(A,B)\in (\kk^{n\times n})^2$ with $AB=BA$.
    It is an irreducible variety of dimension $n^2+n$.\footnote{Since we view $\cZ$ as an algebraic set it is also reduced. It is not known whether the commuting variety, as a scheme, is reduced.} 
The group $\GL_n(\kk)$ acts on $\cZ$ by simultaneous conjugation, and $S_n$ acts on $\cD\times \cD$ by simultaneously permuting the diagonal entries of the two matrices.
There is an isomorphism between GIT quotients
$\cZ/\!\!/\GL_n(\kk)\cong (\cD\times \cD)/\!\!/S_n$ by \cite[Lemma 2.8.3]{GG}.
We have a rational map $(r,s):(\kk^{n\times n})^m\dashto \cZ$. Define the rational map $\sigma:(\kk^{n\times n})^m\dashto (\cD\times \cD)/\!\!/S_n$ as the composition
$$
\xymatrix{
(\kk^{n\times n})^m\ar@{-->}[r] & \cZ\ar[r]  &\cZ/\!\!/\GL_n(\kk)\cong(\cD\times \cD)/S_n}
$$    
Since $r(\uX)$ and $s(\uX)$ commute for every tuple of $n\times n$ matrices $\uX$ where they are both defined, each pair of their corresponding eigenvalues give a point on $\cC$. So the image of $\sigma$ is contained in $\cC^{(d)}=\cC^d/\!\!/S_d$, the $d$\textsuperscript{th} symmetric power of $\cC$ (see~\cite[Proposition 3.1]{Mil2}).
 There is a canonical map $\phi:\cC^{(n)}\to J(\cC)$ by \cite[Section 5]{Mil2}, where $J(\cC)$ is the Jacobian variety of $\cC$, and $\phi$ is surjective since $n\ge g$. Note that $\dim \cC^{(n)}=n$ and $\dim J(\cC)=g$ by \cite[Proposition 2.1]{Mil2}.
Since $J(\cC)$ is an abelian variety and every rational map from an affine space to an abelian variety is constant by \cite[Corollary 3.9]{Mil1}, it follows  that $\phi\circ\sigma$ is constant. Therefore, 
the image of $\sigma$ has dimension at most $n-g$. The projection of $\cD\times \cD\to \cD$ onto the first factor induces a morphism of quotients $\pi:(\cD\times \cD)/\!\!/S_n\to \cD/\!\!/S_n$. The composition $\pi\circ \sigma$ sends an $m$-tuple $\uX\in (\kk^{n\times n})^m$ to the set of eigenvalues of $r(X)$. The dimension of the image of $\pi\circ\sigma$ is at most the dimension of the image of $\sigma$, which is $n-g$.
\end{proof}

Proposition \ref{p:genus} does not admit a converse; for instance, eigenvalues of $xy-yx$ add up to 0 and are thus linearly dependent, and the centralizer of $xy-yx$ in $\rx{x,y}$ equals $\kk(xy-yx)$ by \cite[Theorem 7.9.8]{Coh2}. The authors do not know of an element in a free skew field whose eigenvalues (in the sense of matrix functions as above) satisfy more than a single relation.

\section{Algorithms}\label{sec:algo}

In this section we discuss computational aspects of finding generators for skew subfields of $M=\rx{x_1,\dots,x_m}$. 
Denote $R=\px{x_1,\dots,x_m}$.

First, we outline some basics of computing and rational identity testing in $M$. 
For every $r\in M$ there exist $\delta\in\N$, 
% an affine full $\delta\times \delta$ matrix $A$ over $R$
$\delta\times \delta$ matrices $A_0,A_1,\dots,A_m\in \kk^{\delta\times\delta}$
and row/column vectors $u,v$ over $\kk$ such that 
$r=uA^{-1}v$ and
$A=A_0+x_1A_1+\cdots+x_mA_m$ is full.
% $r=uA^{-1}v$; 
The triple $(u,A,v)$ is known as a \emph{linear representation} of $r$ of dimension $\delta$. See \cite[Section 1]{CR} for the construction of linear representations, and the arithmetic operations in $M$ in terms of linear representations. Then, $r=0$ in $M$ if and only if the $(\delta+1)\times (\delta+1)$ matrix %$\left(\begin{smallmatrix}0&u\\v&A\end{smallmatrix}\right)$ 
$$
\begin{pmatrix}
0 & u\\
v & A\end{pmatrix}
$$
is not full over $R$. There is a polynomial-time deterministic algorithm for deciding non-fullness of such a matrix by \cite[Theorem I.1]{Gar} and \cite[Theorem 1.5]{Iva}. If $\kk$ is infinite, then a $(\delta+1)\times (\delta+1)$ affine matrix over $R$ is full if and only if its evaluation at some tuple of matrices $X\in (\kk^{\delta\times \delta})^m$ is invertible \cite[Proposition 1.12]{DM}. In particular, this allows for a straightforward probabilistic test of $r\neq0$: based on the number of operations in $r$ one can estimate the dimension of a linear representation of $r$, and check if $r(X)$ is nonzero at a sufficiently large random matrix tuple $X$ over $\kk$. 

The above algorithms for rational identity testing also extend to computing the rank of matrices over $M$. Given a rectangular matrix $A$ over $R$, there is an algorithm for deciding whether $A$ has full rank over $M$: pad $A$ with zeros to a square matrix, linearize it, and then apply \cite[Theorem 1.5]{Iva}. Furthermore, if $\kk$ is infinite, then $A$ has full rank over $M$ if and only if $A(X)$ has full rank for some $X\in (\kk^{N\times N})^m$, where $N\in\N$ is sufficiently large (and can be estimated using the number of operations in $A$ and \cite[Proposition 1.12]{DM}).

Let $L$ be a skew subfield of $M$ for which $d=\dim M_L$ is finite. Below we present an algorithm that finds a set of generators with at most $d^2(m-1)+d$ elements. 
The algorithm finds a basis $\cB$ of $M$ as a right $L$-module with $1\in \cB$. Such a basis has $d=\dim M_L$ elements.
If $\cB=\{b_1,\dots,b_d\}$ with $b_1=1$, then the algorithm finds $d \times d$ matrices $X_1,\dots,X_m$ with entries $(X_\ell)_{i,j}\in L$ that satisfy $x_\ell b_j=\sum_{i} b_i (X_\ell)_{i,j}$. 
Because of the way the basis $\cB$ is constructed in the algorithm, some of the coefficients $(X_\ell)_{i,j}$ will be $0$ or $1$. 
Let $\cL$ be the set of all elements $(X_\ell)_{i,j}$ distinct from $0$ and $1$; then, $\cL$ generates the skew field $L$.

The input of Algorithm \ref{algo1} are the field $\kk$, the number $m\in\N$ determining $M$, and the intermediate skew field $L$. 
The latter is given indirectly; we assume there is an algorithm for $w,b_1,\dots,b_r\in M$ that finds coefficients $c_1,\dots,c_r\in L$
with $w=b_1c_1+\cdots+b_rc_r$ or decides that such coefficients do not exist. Note that the coefficients are unique if $b_1,\dots,b_r$ are independent as right $L$-module generators.

During the execution of Algorithm \ref{algo1}, $\cB$ is always a set of elements of $M$ that are linearly independent over $L$.
This set consists of words in $x_1,\dots,x_m$. 
If $\cB=\{b_1,\dots,b_r\}$ with $b_1=1$, then for every $i>1$, $b_i$ equals $x_\ell b_j$ for some $\ell$ and some $j<i$; therefore, $(X_\ell)_{i,j}=1$
and $(X_\ell)_{t,j}=0$ for $t\neq i$. At termination, $\cB$ is a basis of $M$ as a right $L$-module. At least $d(d-1)$
entries $(X_\ell)_{i,j}$ are equal to $0$ or $1$. This implies that $\cL$ has at most $d^2m-d(d-1)=d^2(m-1)+d$ elements at termination. During execution, $\cT$ consists of monomials that have to be checked whether they lie in the $L$-module generated by $\cB$.
\begin{algorithm}\label{algo1}
\phantom{}
\begin{algorithmic}[1]
\State $\cT=\{1\}$
\State $\cL=\{\}$
\State $\cB=\{\}$
\While{$\cT\neq \{\}$}
    \State    choose $w\in \cT$
    \State $\cT=\cT\setminus\{w\}$
    \If{$w\in \cB L$}
       \State  write $w=\sum_{b\in \cB} b c_b$ with $c_b\in L$ for all $b\in \cB$
        \State $\cL=\cL\cup \{c_b\colon b\in \cB\}$
    \Else
        \State $\cB=\cB\cup \{w\}$
        \State $\cT=\cT\,\cup\,\{x_1w,x_2w,\dots,x_mw\}$
    \EndIf
\EndWhile
\end{algorithmic}
\end{algorithm}

The correctness and termination of Algorithm \ref{algo1} follows from the proofs of Proposition \ref{prop:Ws} and Theorem \ref{theo:FinitelyGenerated}. Let us concretize lines 7--8 in the case where $L=M^G$ for a finite group $G$. Given $b_1,\dots,b_\ell\in M$ with $\ell\le |G|$, let us call the $|G|\times\ell$ matrix over $M$
$$\big(b_j^g\big)_{g\in G,1\le j\le \ell}$$
the $G$-matrix of $b_1,\dots,b_\ell$ (determined up to ordering of $G$). Similarly to Lemma \ref{l:Winvert}, we see that $b_1,\dots,b_\ell \in M$ are linearly independent over $L$ if and only if the $G$-matrix of $b_1,\dots,b_\ell$ has full rank over $M$. 
Then, the equation $w=\sum_{b\in \cB} b c_b$ for known $w\in\cT$, $b\in\cB$ and unknown $c_b\in L$ in line 8 is equivalent to
\begin{equation}\label{e:overdetermined}
(w^g)_{g\in G} = \big(b_\ell^g\big)_{g\in G,b\in\cB} \cdot (c_b)_{b\in \cB}.
\end{equation}
Note that \eqref{e:overdetermined} is solvable for $c_b\in L$ if and only if it is solvable for $c_b\in M$, since the $G$-matrix of $\cB$ has full rank. 
Thus, \eqref{e:overdetermined} is an overdetermined right-linear system of equations over $M$, and can be solved via Gaussian elimination (using rational operations and identity testing in $M$).

Now let $G$ be a finite group acting faithfully and linearly on $V=\operatorname{span}_{\kk}\{x_1,\dots,x_m\}$, and $\kar\kk\nmid |G|$. The skew field $L=M^G$ is free of rank $|G|(m-1)+1$ by Theorem \ref{t:linfree}. In this setup, the key advantage in comparison with Theorem \ref{theo:FinitelyGenerated} is the existence of a $G$-stable subspace $W\subseteq R$ such that $M=WL$, $\dim W=|G|$ and 
$1\in W\subseteq\kk+VW$ (Proposition \ref{prop:Ws_lin} and Section \ref{ss:free}). 
Below we present an algorithm for finding a basis $\mathcal B$ of such $W$, consisting of homogeneous elements in $R$, and a set of free generators $\mathcal L$ of $L$. 
The input of Algorithm \ref{algo2} below are the field $\kk$, the number $m\in\N$ determining $M$, and the linear action of $G$ on $V$ (e.g., given as a collection of $|G|$ $m\times m$ matrices over $\kk$). 

\begin{algorithm}\label{algo2}
\phantom{}
\begin{algorithmic}[1]
\State $\cT=\{1\}$
\State $\cL=\{\}$
\State $\cB=\{\}$
\While{$\cT\neq\{\}$}
\State extract a basis of $\operatorname{span}_\kk\{x_iw\colon 1\le i\le m,\ w\in \cT\}$ as a disjoint union $\cB_1\cup\cdots \cup\cB_r$ such that $\operatorname{span}_\kk\cB_j$ is an irreducible $G$-space for $j=1\dots,r$
\State $\cT=\{\}$
    \For{$j=1,\dots,r$}
        \State choose one $w\in \cB_j$
        \If{$w\in \cB L$}
               \State  write $w=\sum_{b\in \cB} b c_b$ with $c_b\in L$ for all $b\in \cB$
                \State $\cL=\cL\cup \{c_b\colon b\in \cB\}$
        \Else
            \State $\cB=\cB\cup\cB_j$
            \State $\cT=\cT\cup \cB_j$
        \EndIf
    \EndFor
\EndWhile
\State replace $\cL$ by its maximal $\kk$-linearly independent subset
\end{algorithmic}
\end{algorithm}

The correctness and termination of Algorithm \ref{algo2} is established as in the case of Algorithm \ref{algo1}, with modifications according to the proof of Proposition \ref{prop:Ws_lin}. Let us highlight the main differences. 
First, $\operatorname{span}_\kk\{x_iw\colon 1\le i\le m,\ w\in \cT\}$ in line 5 is a $G$-space, since it is the tensor product of $G$-spaces $V$ and $\operatorname{span}_\kk\cT$ (and the latter is a $G$-space inductively). For finding its decomposition into irreducible $G$-spaces and extracting their bases, see \cite[Theorem 1.2]{BR} and \cite[Section 7.4]{HBO}. 
Second, lines 9--11 are applied to a single element of $w\in \cB_j$; this is sufficient according to the system \eqref{e:overdetermined} since the $\kk$-span of $\cB$ is $G$-stable.
Third, Theorem \ref{t:linfree} guarantees that line 18 results in a set of free generators of $L$; let us thus comment on testing $\kk$-linear dependence in $M$. 
By \cite[Theorem 7.6.16]{Row}, linear dependence of $r_1,\dots,r_\ell \in M$ is equivalent to vanishing of the Capelli polynomial in $r_1,\dots,r_\ell$ and $\ell-1$ auxiliary variables, and thus reduces to rational identity testing as in the beginning of the section. Alternatively, $r_1,\dots,r_\ell$ are linearly dependent if and only if their evaluations are linearly dependent on all sufficiently large matrix tuples \cite[Theorem 6.5]{Vol}.

\end{document}